\theoremstyle{plain}
\theoremstyle{definition}
\newtheorem{theorem}{Theorem}[section]
\newtheorem{conjecture}[theorem]{Conjecture}
\newtheorem{remark}[theorem]{Remark}
\newtheorem{lemma}[theorem]{Lemma}
\newtheorem{definition}[theorem]{Definition}
\newtheorem{example}[theorem]{Example}
\newtheorem{proposition}[theorem]{Proposition}
\newtheorem{corollary}[theorem]{Corollary}
\newtheorem*{theorem*}{Theorem}
\begin{document}
\date{}
\title{Promotion on Generalized Oscillating Tableaux and Web Rotation}
\author{Rebecca Patrias}
\address{Laboratoire de Combinatoire et d'Informatique Math\'ematique,
Universit\'e du Qu\'ebec \`a Montr\'eal}
\email{patriasr@lacim.ca}

\begin{abstract}
We introduce the notion of a generalized oscillating tableau and define a promotion operation on such tableaux that generalizes the classical promotion operation on standard Young tableaux. As our main application, we show that this promotion corresponds to rotation of the irreducible $A_2$-webs of G. Kuperberg. 
\end{abstract}

\maketitle

\section{Introduction}

Recall that a \textit{partition} is a finite, nonincreasing sequence of positive integers $\lambda=(\lambda_1,\ldots,\lambda_t)$ and that any partition can be identified with the corresponding \textit{Young diagram}---a left-justified array of boxes with $\lambda_i$ boxes in the $i$th row from the top. An \textit{oscillating tableau} of length $k$ is a sequence of $k+1$ partitions $(\lambda^0=\emptyset,\ldots,\lambda^k)$, where $\lambda^0=\emptyset$ and $\lambda^i$ is obtained from $\lambda^{i-1}$ by either adding or deleting one box. In this paper, we generalize these notions. 

We define a \textit{generalized partition with $n$ parts} $\lambda=(\lambda_1\geq\cdots\geq\lambda_n)$ to be a nonincreasing list of $n$ (not necessarily positive) integers. We introduce the notion of a \textit{generalized oscillating tableau of length $k$ with $n$ parts}: a sequence of $k+1$ generalized partitions $(\emptyset,\lambda^1,\ldots,\lambda^k)$ such that each $\lambda^i$ has $n$ parts, $\lambda^0=\emptyset=(0,\ldots,0)$, and $\lambda^{i+1}$ can be obtained from $\lambda^i$ by either adding or subtracting 1 from one of $\lambda^i_1,\ldots,\lambda^i_n$. We visualize generalized partitions using a generalization of Young diagrams, where we allow negative row sizes and indicate negative rows by coloring the corresponding boxes red. We may then associate a set-valued tableau $T$ to each generalized oscillating tableau, where the set of boxes of $T$ is the union of boxes in $\lambda^1,\ldots,\lambda^k$ and we add entry $i$ (resp. $i'$) to the subset of primed and unprimed positive integers in a box if $\lambda^i$ is obtained from $\lambda^{i+1}$ by adding (resp. deleting) the corresponding box. For example, the generalized oscillating tableau of length 5 with 2 parts 
\[((0,0), (1,0,), (1,-1), (2,-1),(2,0), (1,0))\] corresponds to the set-valued filling below.
\begin{center}
\begin{ytableau}
\none & 1 & 35'\\
*(red) 2'4
\end{ytableau}
\end{center}

Let $\text{GOT}(k,n)$ denote the set of generalized oscillating tableaux of length $k$ with $n$ parts. We define a promotion operation $p:\text{GOT}(k,n)\to\text{GOT}(k,n)$ that generalizes classical tableau promotion. We define this promotion operation using both growth rules and growth diagrams and using tableau rules. Figure~\ref{fig:genoscprom} shows an example of generalized oscillating promotion. A reader familiar with promotion on standard Young tableaux will recognize the similarities.  

\begin{figure}[h!]
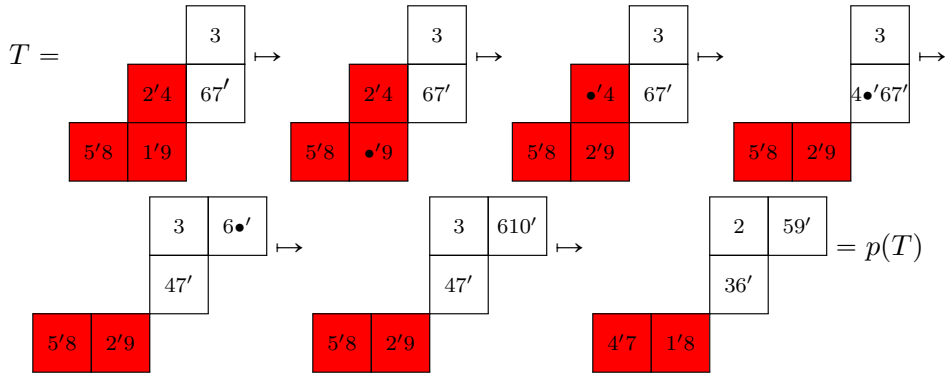

\[
\ytableausetup{boxsize=.3in}
T=
\begin{ytableau}
\none & \none & \scriptstyle{3} \\
\none & *(red) {\scriptstyle{2'4}} & {\scriptstyle{67}}' \\
*(red) {\scriptstyle{5'8}}  &*(red) {\scriptstyle{1'9}}
\end{ytableau}\mapsto
\begin{ytableau}
\none & \none & {\scriptstyle{3}} \\
\none & *(red) {\scriptstyle{2'4}} & {\scriptstyle{67'}} \\
*(red) {\scriptstyle{5'8}}  &*(red) {\scriptstyle{\bullet'9}}
\end{ytableau}\mapsto
\begin{ytableau}
\none & \none & {\scriptstyle{3}} \\
\none & *(red) {\scriptstyle{\bullet'4}} & {\scriptstyle{67'}} \\
*(red) {\scriptstyle{5'8}}  &*(red) {\scriptstyle{2'9}}
\end{ytableau}\mapsto
\begin{ytableau}
\none & \none & {\scriptstyle{3}} \\
\none & \none & {\scriptstyle{4\bullet'67'}} \\
*(red) {\scriptstyle{5'8}}  &*(red) {\scriptstyle{2'9}}
\end{ytableau}\mapsto\]
\[
\begin{ytableau}
\none & \none & {\scriptstyle{3}} & {\scriptstyle{6 \bullet'}} \\
\none & \none & {\scriptstyle{47'}} \\
*(red) {\scriptstyle{5'8}}  &*(red) {\scriptstyle{2'9}}
\end{ytableau}\mapsto
\begin{ytableau}
\none & \none & {\scriptstyle{3}} & {\scriptstyle{6 10'}} \\
\none & \none & {\scriptstyle{47'}} \\
*(red) {\scriptstyle{5'8}}  &*(red) {\scriptstyle{2'9}}
\end{ytableau}\mapsto
\begin{ytableau}
\none & \none & {\scriptstyle{2}} & {\scriptstyle{5 9'}} \\
\none & \none & {\scriptstyle{36'}} \\
*(red) {\scriptstyle{4'7}}  &*(red) {\scriptstyle{1'8}}
\end{ytableau}
=p(T)
\]
\caption{We start with generalized oscillating tableau $T$ and construct its image under generalized oscillating promotion, $p(T)$.}\label{fig:genoscprom}
\end{figure}

As our main application, we relate generalized oscillating promotion on $\text{GOT}(k,3)$ to rotation of irreducible $A_2$-webs. An \textit{irreducible $A_2$-web} can be defined as a bipartite graph with fixed coloring embedded in a disk such that each vertex on the boundary of the disk has degree 1, each interior vertex has degree 3, and all internal faces have at least 6 sides. Webs were defined by G. Kuperberg motivated by the study of multilinear invariant theory \cite{kuperberg1996spiders}. In his paper, Kuperberg introduces combinatorial rank 2 spiders, which are a diagrammatic presentation of the space Inv$(V_1 \otimes \cdots \otimes V_n)$, i.e., the invariant space of a tensor product of irreducible representations $V_i$ of a rank 2 Lie algebra $\mathfrak{g}$. Webs are a basis for the invariant space in this diagrammatic presentation.

Webs have since been studied by G. Kuperberg and M. Khovanov \cite{khovanov1999web}; T.K. Peterson, P. Pylyavskyy, and B. Rhoades \cite{petersen2009promotion}; S. Fomin and P. Pylyavskyy \cite{fomin2012tensor}; and many others. In particular, Khovanov and Kuperberg describe a bijection between webs and \textit{signature and state strings}: a vector of pairs, where each pair $(j_i,s_i)\in \{\bullet,\circ\}\times\{1,0,\bar{1}\}$. Using this correspondence between webs and signature and state strings, it is easy to associate to each web with all black boundary vertices a three-row standard Young tableau of rectangular shape. In their paper, Peterson, Pylyavskyy, and Rhoades describe how to interpret the action of tableau promotion on these rectangular tableaux as web rotation. We generalize this result in this paper.

Using the signature and state strings of Khovanov and Kuperberg \cite{khovanov1999web}, we associate to each web with fixed first/leftmost vertex and $k$ boundary vertices a generalized oscillating tableau of length $k$ with $3$ parts. Note that in contrast to the work of Peterson--Pylyavskyy--Rhoades, we do not restrict to webs whose boundary vertices are all the same color. Our main result is the following.

\begin{theorem}\label{thm:mainthm}
Let $D$ be a web with fixed leftmost vertex. The generalized oscillating tableau associated with counterclockwise rotation of $D$ is given by generalized oscillating promotion of the tableau associated with $D$ itself.
\end{theorem}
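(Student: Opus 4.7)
The plan is to follow the strategy of Petersen--Pylyavskyy--Rhoades \cite{petersen2009promotion} for rectangular standard Young tableaux, adapted to the generalized setting. The proof rests on three pillars: (i) the Khovanov--Kuperberg encoding of webs by signature and state strings, (ii) the growth-diagram formulation of generalized oscillating promotion, and (iii) a dictionary between local moves on webs and local moves on growth diagrams.

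First, I would make the web-to-tableau correspondence completely explicit. Reading the boundary vertices counterclockwise beginning at the fixed leftmost vertex produces a sequence $(j_1,s_1),\ldots,(j_k,s_k) \in \{\bullet,\circ\}\times\{1,0,\bar 1\}$. Black (resp.\ white) signatures trigger the addition (resp.\ deletion) of a box, and the state records which of the three rows is affected; this determines the map $D \mapsto T(D) \in \text{GOT}(k,3)$. Counterclockwise rotation moves the leftmost boundary vertex around the disk to become the rightmost, and at the level of signature and state strings this corresponds to cycling the first pair to the end, possibly with a modification that records the vertex's new position relative to the rest of the web. A preliminary lemma will pin down this string-level operation explicitly.

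Next, I would verify via growth diagrams that applying $p$ to $T(D)$ yields precisely $T(\mathrm{rot}(D))$. Each local growth-rule step of promotion should correspond, via the Khovanov--Kuperberg bijection, to a local web manipulation preserving the isotopy class of the underlying graph. I expect to proceed by induction on $k$, with small base cases checked directly (very short boundaries and a handful of web fragments) and the inductive step following from locality of both promotion and rotation. The tableau-rule description of $p$ illustrated in Figure~\ref{fig:genoscprom} then serves as a sanity check: each "slide" of a bullet through the tableau should match the gliding of the rotated vertex through the web.

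The main obstacle is the treatment of mixed-color boundaries. In the Petersen--Pylyavskyy--Rhoades setting all boundary vertices are black, corresponding to GOT's with only nonnegative row sizes and recovering standard Young tableaux of rectangular shape. Allowing white boundary vertices forces genuinely negative parts in the generalized partitions, so the local growth rules must be verified in every combination of leftmost and rightmost boundary colors. The delicate point is to organize this case analysis coherently and to demonstrate, uniformly across all four color configurations, that the promotion growth rules match the local web moves induced by rotation, rather than breaking down into an unwieldy collection of unrelated subcases.
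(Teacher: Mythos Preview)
Your proposal is too vague to constitute a proof strategy, and the concrete pieces you do suggest diverge from what actually makes the argument go through.

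The paper does \emph{not} proceed by induction on $k$, nor does it set up a dictionary in which ``each local growth-rule step of promotion corresponds to a local web manipulation.'' Instead, the key structural fact is that rotation changes the state string of $D$ at \emph{exactly three positions}: the leftmost vertex $v$ (which cycles to the end), and two intermediate vertices $v^l$ and $v^r$ determined by the left cut and right cut starting at $v$ (Theorem~\ref{thm:generalwordchange}). These two intermediate positions are characterized intrinsically as the first places the dominant path returns to the upper, respectively lower, extreme ray of the Weyl chamber (Proposition~\ref{prop:rotationextremerays}). On the promotion side, Lemmas~\ref{lem:labelsincrease}--\ref{lem:equalitygivesjumps} and Corollary~\ref{cor:tabrowchange} show that during generalized oscillating promotion an entry changes row at exactly the same two positions, characterized by the same extreme-ray condition on the sequence $(\lambda^i)$. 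Matching these two three-position descriptions is the heart of the proof.

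The mechanism for handling mixed boundary colors is also missing from your outline. Rather than a four-way case analysis on leftmost/rightmost colors, the paper replaces every white boundary vertex by a \emph{fork} with two black leaves, producing an all-black web $D'\in\mathcal{M}_n$ to which the Petersen--Pylyavskyy--Rhoades results apply directly; one then reads off how the state changes at $v^l$ and $v^r$ in $D$ from the known changes in $D'$. Finally, there is a genuinely separate special case your proposal does not anticipate: when $v$ lies on a disjoint identity-web component, Proposition~\ref{prop:hitextremerays} fails and the left/right cut analysis does not apply, so this situation must be handled by a direct argument. Your induction-on-$k$ plan would need to confront all of this, and it is not clear how removing a boundary vertex interacts with the Khovanov--Kuperberg bijection in a way that supports such an induction.
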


The paper proceeds as follows. In Section~\ref{sec:preliminaries}, we review basic definitions related to tableaux and promotion on tableaux. We introduce irreducible $A_2$-webs and describe the correspondence between webs and signature and state string. We end the section by discussing the bijection between three-row, rectangular standard Young tableaux and webs with all black boundary vertices. In Section~\ref{sec:blackpromotionandrotation}, we summarize the results of Peterson--Pylyavksyy--Rhoades on promotion and web rotation for webs with all black boundary vertices. Section~\ref{sec:osctablandpromotion} introduces generalized oscillating tableaux and generalized oscillating promotion as well as explains how to associate a generalized oscillating tableau to a web. We prove Theorem~\ref{thm:mainthm} in Section~\ref{sec:rotationandgenoscprom}. We end by stating some related open questions in Section~\ref{sec:future}. 

\section{Preliminaries}\label{sec:preliminaries}
\subsection{Tableaux and Promotion}\label{sec:tabandprom}
Recall that a \textit{partition} is a finite, weakly decreasing sequence of positive integers $\lambda=(\lambda_1\geq \lambda_2\geq\cdots\geq \lambda_k>0)$. To any partition we may associate a \textit{Young diagram}---a left-justified array of boxes (or cells) with rows that weakly descend in size from top to bottom---by having $\lambda_i$ boxes in the $i$th row. We equate these two notions and refer to both a partition and its Young diagram as $\lambda$. Let $|\lambda|=\lambda_1+\cdots+\lambda_k$ denote the number of boxes in $\lambda$. A \textit{standard Young tableau} of shape $\lambda$ is a filling of the cells of a Young diagram of shape $\lambda$ with $1,2,\ldots,|\lambda|$ such that entries in rows and columns are increasing and each entry appears exactly once. We say partition $\mu$ is contained in $\lambda$ if the Young diagram for $\mu$ fits inside that for $\lambda$, and in this case, we let $\lambda/\mu$ denote the set of boxes of $\lambda$ that are not also in $\mu$.

We next describe an action on standard Young tableaux of shape $\lambda$ called \textit{jeu de taquin promotion}, or simply \textit{promotion}. Promotion was originally defined as an action on partially ordered sets by Sch\"utzenberger \cite{schutzenberger1972promotion}.

\begin{definition}[Promotion]
Given a standard Young tableau $T$ of shape $\lambda$ with $|\lambda|=k$, form $p(T)$ using the following steps.
\begin{enumerate}
\item Delete the entry 1 from the box in the upper lefthand corner of $T$ and replace it with $\bullet$.
\item Perform jeu de taquin: For each $i\in\{2,\ldots,k\}$, perform the following swap with $i$ starting at 2 and consecutively increasing after each swap.
\begin{enumerate}
\item If the box containing $i$ is directly below or directly right of the box containing $\bullet$, switch the labels of the two boxes.
\item If the box containing $i$ is not directly below and not directly right of the box containing $\bullet$, do nothing.
\end{enumerate}
\item Delete the $\bullet$ and fill its box with $k+1$.
\item Subtract 1 from each entry.
\end{enumerate}
\end{definition}

\begin{example}\label{ex:promotion}
Starting with standard Young tableau $T$, we obtain $p(T)$ using promotion.
\ytableausetup{boxsize=.2in}
\begin{eqnarray*}
T=
\begin{ytableau}
1 & 2 & 6 \\
3 & 5 & 7\\
4 & 8 & 9
\end{ytableau}\hspace{.1in} \mapsto \hspace{.1in}
\begin{ytableau}
\bullet & 2 & 6 \\
3 & 5 & 7\\
4 & 8 & 9
\end{ytableau} &\mapsto& 
\begin{ytableau}
2 & \bullet & 6 \\
3 & 5 & 7\\
4 & 8 & 9
\end{ytableau}\hspace{.1in} \mapsto \hspace{.1in}
\begin{ytableau}
2 & 5 & 6 \\
3 & \bullet & 7\\
4 & 8 & 9
\end{ytableau}\hspace{.1in} \mapsto \\
\begin{ytableau}
2 & 5 & 6 \\
3 & 7 & \bullet\\
4 & 8 & 9
\end{ytableau}\hspace{.1in} \mapsto \hspace{.1in}
\begin{ytableau}
2 & 5 & 6 \\
3 & 7 & 9\\
4 & 8 & \bullet
\end{ytableau} &\mapsto& 
\begin{ytableau}
2 & 5 & 6 \\
3 & 7 & 9\\
4 & 8 & 10
\end{ytableau}\hspace{.1in} \mapsto \hspace{.1in}
\begin{ytableau}
1 & 4 & 5 \\
2 & 6 & 8\\
3 & 7 & 9
\end{ytableau}
=p(T)
\end{eqnarray*}
\end{example}

We can equivalently describe promotion using \textit{promotion growth diagrams} and a set of \textit{promotion growth rules}, as we now explain. We refer the reader to \cite{stanley1999enumerative} for more details. Suppose we wish to perform promotion on standard Young tableau $T$ with $k$ boxes. First, write $T$ as a sequence of partition shapes $(\lambda^0=\emptyset, \lambda^1,\ldots,\lambda^k)$ starting with the empty shape, where $\lambda^i$ is obtained from $\lambda^{i-1}$ by adding the box with label $i$ in $T$. We inductively create a new sequence $(\mu^0=\emptyset, \mu^1,\ldots,\mu^k)=p(T)$ using the following rules.

\underline{Promotion Growth Rules}

Suppose $\lambda^s/\mu^{s-1}$ is a box in row $i$ and $\lambda^{s+1}/\lambda^s$ is a box in row $j$. 
\begin{itemize}
\item If the result of adding a box to $\mu^{s-1}$ in row $j$ is a partition, then $\mu^s$ is the result of adding this box to $\mu^{s-1}$.
\item If the result of adding a box to $\mu^{s-1}$ in row $j$ is not a partition then $\mu^s=\lambda^s$.
\item $\mu^k=\lambda^k$.
\end{itemize}

Starting with a tableau $T$, we illustrate these rules in a growth diagram, as shown in Example~\ref{ex:promgrowthrules}.

\begin{example}\label{ex:promgrowthrules} Let $T$ be the tableau from Example~\ref{ex:promotion}. Then $T$ is represented by the sequence of partitions on the top line, and we construct $p(T)$ on the line below using the growth rules. The image shown is the corresponding growth diagram.

\begin{center}
\begin{tikzpicture}[scale=1.6]
\ytableausetup{boxsize=.1in}
\node (0) at (0,0) {$\emptyset$};
\node (1) at (1,0) {$\ydiagram{1}$};
\node (2) at (2,0) {$\ydiagram{2}$};
\node (3) at (3,0) {$\ydiagram{2,1}$};
\node (4) at (4,0) {$\ydiagram{2,1,1}$};
\node (5) at (5,0) {$\ydiagram{2,2,1}$};
\node (6) at (6,0) {$\ydiagram{3,2,1}$};
\node (7) at (7,0) {$\ydiagram{3,3,1}$};
\node (8) at (8,0) {$\ydiagram{3,3,2}$};
\node (9) at (9,0) {$\ydiagram{3,3,3}$};
\node (0') at (1,-1) {$\emptyset$};
\node (1') at (2,-1) {$\ydiagram{1}$};
\node (2') at (3,-1) {$\ydiagram{1,1}$};
\node (3') at (4,-1) {$\ydiagram{1,1,1}$};
\node (4') at (5,-1) {$\ydiagram{2,1,1}$};
\node (5') at (6,-1) {$\ydiagram{3,1,1}$};
\node (6') at (7,-1) {$\ydiagram{3,2,1}$};
\node (7') at (8,-1) {$\ydiagram{3,2,2}$};
\node (8') at (9,-1) {$\ydiagram{3,3,2}$};
\node (9') at (10,-1) {$\ydiagram{3,3,3}$};
\draw (0)--(1)--(2)--(3)--(4)--(5)--(6)--(7)--(8)--(9);
\draw (0')--(1')--(2')--(3')--(4')--(5')--(6')--(7')--(8')--(9');
\draw (1)--(0') (2)--(1') (3)--(2') (4)--(3') (5)--(4') (6)--(5') (7)--(6') (8)--(7') (9)--(8') ;
\end{tikzpicture}
\end{center}
\end{example}

We can translate between the growth diagram and the tableau description of promotion as follows. Starting with the growth diagram, we of course recover the original standard Young tableau $T$ by reading across the top line. We then create the result of swapping $\bullet$ with $i$ by reading from the growth diagram $(\mu^0, \mu^1,\ldots,\mu^{i-1},\lambda^i,\ldots,\lambda^k)$. When we form the tableau with $\bullet$, we put entry $j+1$ in $\mu^{j}/\mu^{j-1}$, entry $j$ in $\lambda^j/\lambda^{j-1}$, and we put $\bullet$ in the box $\lambda^i/\mu^{i-1}$. For example, reading $(\mu^0,\mu^1,\mu^2,\mu^3,\mu^4,\lambda^5,\lambda^6,\lambda^7,\lambda^8,\lambda^9)$ in Example~\ref{ex:promgrowthrules}, we build
\begin{center}
\ytableausetup{boxsize=.2in}
\begin{ytableau}
2 & 5 & 6 \\
3 & \bullet & 7 \\
4 & 8 & 9
\end{ytableau}
\end{center}  
from Example~\ref{ex:promotion}. This translation will be important in later sections.

\subsection{Webs}
Webs were introduced by G. Kuperberg in the following way.

\begin{definition}[\cite{kuperberg1996spiders}]
An \textit{$A_2$-web} is a planar, directed graph $D$ with no multiple edges embedded in a disk satisfying the following conditions:
\begin{enumerate}
\item $D$ is bipartite, (i.e., each vertex has either all adjacent edges pointing away from it or all adjacent edges pointing toward it)
\item all of the boundary vertices have degree 1, and
\item all internal vertices have degree 3.
\end{enumerate}
An $A_2$-web is \textit{non-elliptic} if
\begin{enumerate}
\item[(4)] all internal faces of $D$ have at least 6 sides.
\end{enumerate}
When all four conditions are satisfied, we call $D$ an \textit{irreducible $A_2$-web.}
\end{definition}

In this document, we will refer to irreducible $A_2$-webs simply as \textit{webs}. In other words, all webs are assumed to be irreducible $A_2$-webs. We will also omit the directions of the edges of a web $D$ and instead bicolor the vertices of $D$. A vertex $v$ will be black if all adjacent edges point toward $v$ and will be white if all adjacent edges point away from $v$. We view webs as combinatorial objects and thus are only concerned with webs up to homeomorphism on the interior of the disk and place boundary vertices canonically. See Figure~\ref{fig:webs} for examples.

\begin{figure}[h!]
\begin{center}
\includegraphics[width=4in]{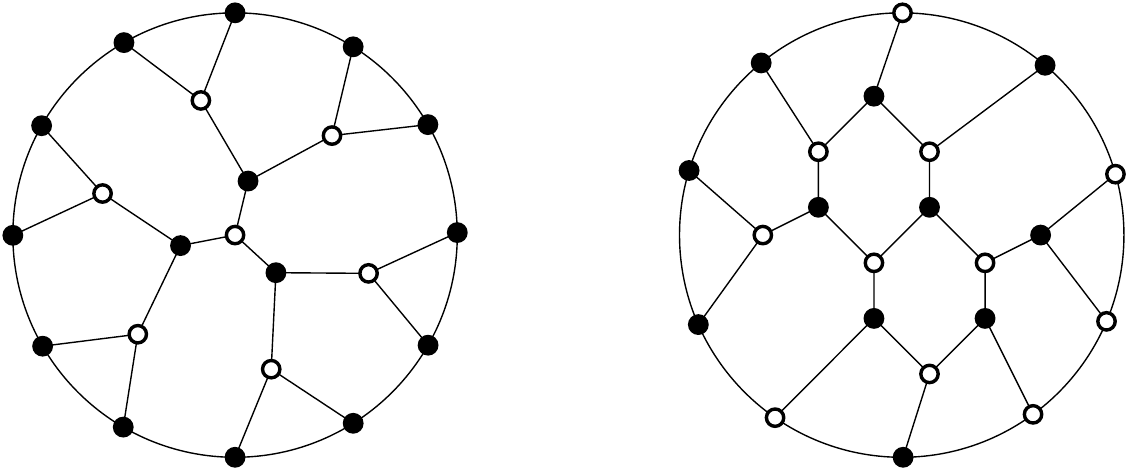}
\end{center}
\caption{Examples of webs. The web on the left is in $\mathcal{M}_4$.}
\label{fig:webs}
\end{figure}

\subsection{Dominant Paths}
In \cite{khovanov1999web}, M. Khovanov and G. Kuperberg describe a bijection between webs with $n$ boundary vertices and fixed leftmost vertex and certain length $n$ strings, which we now describe. 

A \textit{signature} of length $n$ is a sequence $S = (s_1, s_2, \ldots, s_n) \in \{\circ,\bullet\}^n$. A \textit{state string} is a sequence $J = (j_1, j_2, \ldots, j_n) \in \{\bar{1},0,1\}^n$. A \textit{signature and state string} is a sequence $((j_1,s_1), (j_2,s_2),\ldots,(j_n,s_n))$, where each element is a state paired with either $\circ$ or $\bullet$. Khovanov and Kuperberg refer to this string as the \textit{sign and state string}, but we prefer to use the signature terminology as inspired by \cite{fomin2012tensor} since we represent webs with a bicoloring instead of as directed graphs.

Certain signature and state strings correspond to weight lattice paths in a Weyl chamber of sl$(3)$ as follows. Each vector in the signature and state string $(j_k,s_k)$ has a weight $\mu_k$. The imagine in Figure~\ref{fig:weightlattice} shows our convention for the weight for $(1,\bullet)$ in red parallel to the $x$-axis, the weight for $(0,\bullet)$ in blue 120$^\circ$ counterclockwise from the previous, and the weight for $(\bar{1},\bullet)$ in green. The weight for $(j_k,\circ)$ is the negative of the weight for $(j_k,\bullet)$. The \textit{dominant Weyl chamber} is defined as the subset of the weight lattice consisting of positive integral linear combinations of the weights for $(1,\bullet)$ and $(\bar{1},\circ)$ and is shaded gray in the figure. In future sections, we will refer to the extreme ray of the dominant chamber parallel to the weight for $(1,\bullet)$ as the \textit{lower extreme ray} and the extreme ray parallel to the weight for $(\bar{1},\bullet)$ as the \textit{upper extreme ray}.

\begin{figure}[h!]
\begin{center}
\begin{tikzpicture}
\draw[lightgray, top color=lightgray,bottom color=lightgray] (0,0) -- (2,0) -- (2,3.464) -- cycle;
\draw (-2,0)--(2,0);
\draw (0,2)--(0,-2);
\draw[->, very thick, red] (0,0)--(1,0);
\draw[->, very thick, blue] (0,0)--(-.5,.866);
\draw[->, very thick, green] (0,0)--(-.5,-.866);
\end{tikzpicture}
\end{center}
\caption{The weight lattice for sl$(3)$ with dominant chamber shaded gray.}
\label{fig:weightlattice}
\end{figure}
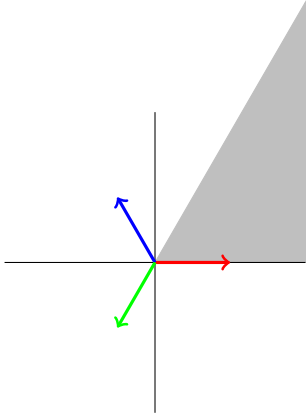

Given a signature and state string with $n$ components, we define a path in the weight lattice of sl$(3)$ $\pi=\pi_0,\ldots,\pi_n$, where $\pi_0$ is the origin and $\pi_k=\pi_{k-1}+\mu_k$. We call such a path \textit{dominant} if it begins and ends at the origin and is contained in the dominant chamber. We call a signature and state string \textit{dominant} if its corresponding path is dominant. For example, the path in Figure~\ref{fig:growthfigure} is easily seen to be dominant, and thus the corresponding signature and state string is dominant.

\subsection{Bijection between strings and webs}
Given a dominant signature and state string,  Khovanov--Kuperberg builds a web by giving a series of inductive rules. To construct a web from a signature and state string with $n$ components, place $n$ vertices on a line segment with colors corresponding to the state string. Draw an edge stemming from each vertex, and label each edge with the corresponding state given in the signature and state string. Next construct the web by following the growth rules shown in Figure~\ref{fig:growthrules}. We emphasize that the $1,0,\bar{1}$ labels in the figure are edge labels. Lastly, glue the ends of the line segment together so the web is contained in a disk. Figure~\ref{fig:growthfigure} shows an example of the web growth rules applied to the signature and state string  
\[\left((1,\bullet),(1,\bullet),(\bar{1},\circ), (0,\bullet), (\bar{1},\circ), (0,\bullet), (\bar{1},\bullet), (\bar{1},\bullet),(0,\circ), (1,\circ),(\bar{1},\bullet)\right).\]

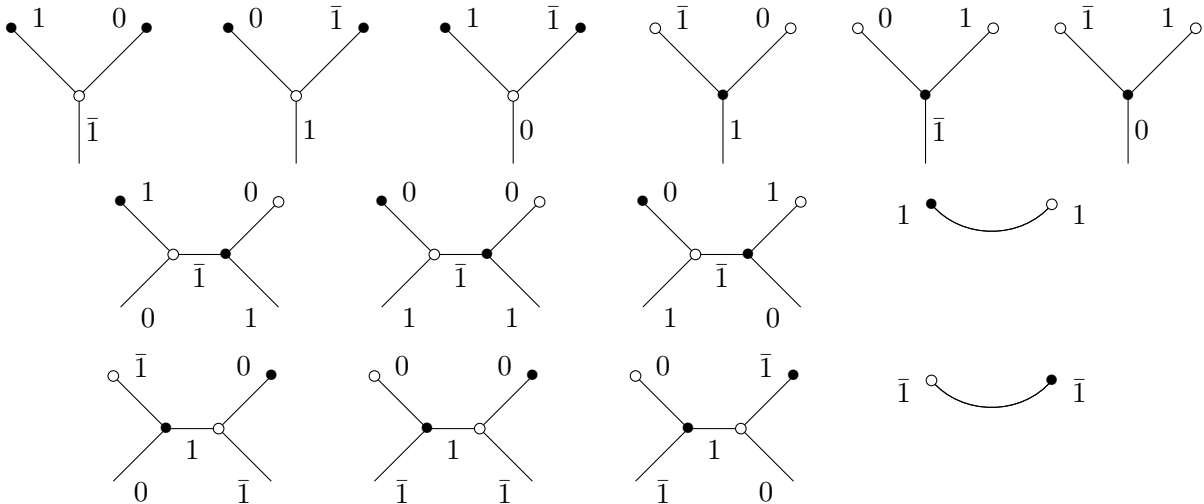
\begin{figure}[h!]
\begin{center}
\begin{tikzpicture}[scale=.9] 
\node (a) at (0,0) {$\bullet$};
\node (b) at (2,0) {$\bullet$};
\node (d) at (1.2,-1.5) {$\bar{1}$};
\draw[-] (1,-1) -- (0,0)  node [above, label=right: 1] {};
\draw[-] (1,-1) -- (1,-2);
\draw[-] (1,-1) -- (2,0) node [above, label=left:0] {};
\node[circle,draw=black, fill=white, inner sep=0pt,minimum size=4pt] (d) at (1,-1) {};
\end{tikzpicture}\hspace{.2in}
\begin{tikzpicture}[scale=.9] 
\node (a) at (0,0) {$\bullet$};
\node (b) at (2,0) {$\bullet$};
\node (d) at (1.2,-1.5) {$1$};
\draw[-] (1,-1) -- (0,0)  node [above, label=right: 0] {};
\draw[-] (1,-1) -- (1,-2);
\draw[-] (1,-1) -- (2,0) node [above, label=left:$\bar{1}$] {};
\node[circle,draw=black, fill=white, inner sep=0pt,minimum size=4pt] (d) at (1,-1) {};
\end{tikzpicture}\hspace{.2in}
\begin{tikzpicture}[scale=.9] 
\node (a) at (0,0) {$\bullet$};
\node (b) at (2,0) {$\bullet$};
\node (d) at (1.2,-1.5) {$0$};
\draw[-] (1,-1) -- (0,0)  node [above, label=right: 1] {};
\draw[-] (1,-1) -- (1,-2);
\draw[-] (1,-1) -- (2,0) node [above, label=left:$\bar{1}$] {};
\node[circle,draw=black, fill=white, inner sep=0pt,minimum size=4pt] (d) at (1,-1) {};
\end{tikzpicture}\hspace{.2in}
\begin{tikzpicture}[scale=.9] 
\node (d) at (1,-1) {$\bullet$};
\draw[-] (1,-1) -- (0,0)  node [above, label=right: $\bar{1}$] {};
\draw[-] (1,-1) -- (1,-2);
\draw[-] (1,-1) -- (2,0) node [above, label=left:0] {};
\node[circle,draw=black, fill=white, inner sep=0pt,minimum size=4pt] (a) at (0,0) {};
\node[circle,draw=black, fill=white, inner sep=0pt,minimum size=4pt] (b) at (2,0) {};
\node (d) at (1.2,-1.5) {1};
\end{tikzpicture}\hspace{.2in}
\begin{tikzpicture}[scale=.9] 
\node (d) at (1,-1) {$\bullet$};
\draw[-] (1,-1) -- (0,0)  node [above, label=right: 0] {};
\draw[-] (1,-1) -- (1,-2);
\draw[-] (1,-1) -- (2,0) node [above, label=left:1] {};
\node[circle,draw=black, fill=white, inner sep=0pt,minimum size=4pt] (a) at (0,0) {};
\node[circle,draw=black, fill=white, inner sep=0pt,minimum size=4pt] (b) at (2,0) {};
\node (d) at (1.2,-1.5) {$\bar{1}$};
\end{tikzpicture}\hspace{.2in}
\begin{tikzpicture}[scale=.9] 
\node (d) at (1,-1) {$\bullet$};
\draw[-] (1,-1) -- (0,0)  node [above, label=right: $\bar{1}$] {};
\draw[-] (1,-1) -- (1,-2);
\draw[-] (1,-1) -- (2,0) node [above, label=left:1] {};
\node[circle,draw=black, fill=white, inner sep=0pt,minimum size=4pt] (a) at (0,0) {};
\node[circle,draw=black, fill=white, inner sep=0pt,minimum size=4pt] (b) at (2,0) {};
\node (d) at (1.2,-1.5) {$0$};
\end{tikzpicture}\end{center}
\begin{center}
\begin{tikzpicture}[scale=.7] 
\node (a) at (0,0) {$\bullet$};
\node (f) at (2,-1) {$\bullet$};
\node(g) at (1.5,-1.4) {$\bar{1}$};
\draw[-] (1,-1) -- (0,0)  node [above, label=right: 1] {};
\draw[-] (1,-1) -- (0,-2) node [below, label=right:0] {};
\draw[-] (1,-1) -- (2,-1) node {};
\draw[-] (2,-1) -- (3,0) node [above, label=left:0] {};
\draw[-] (2,-1) -- (3,-2) node [below, label=left:1] {};
\node[circle,draw=black, fill=white, inner sep=0pt,minimum size=4pt] (b) at (3,0) {};
\node[circle,draw=black, fill=white, inner sep=0pt,minimum size=4pt] (e) at (1,-1) {};
\end{tikzpicture}
\hspace{.3in}
\begin{tikzpicture}[scale=.7] 
\node (a) at (0,0) {$\bullet$};
\node (f) at (2,-1) {$\bullet$};
\node(g) at (1.5,-1.4) {$\bar{1}$};
\draw[-] (1,-1) -- (0,0)  node [above, label=right: 0] {};
\draw[-] (1,-1) -- (0,-2) node [below, label=right: 1] {};
\draw[-] (1,-1) -- (2,-1) node {};
\draw[-] (2,-1) -- (3,0) node [above, label=left:0] {};
\draw[-] (2,-1) -- (3,-2) node [below, label=left:1] {};
\node[circle,draw=black, fill=white, inner sep=0pt,minimum size=4pt] (b) at (3,0) {};
\node[circle,draw=black, fill=white, inner sep=0pt,minimum size=4pt] (e) at (1,-1) {};
\end{tikzpicture}
\hspace{.3in}
\begin{tikzpicture}[scale=.7] 
\node (a) at (0,0) {$\bullet$};
\node (f) at (2,-1) {$\bullet$};
\node(g) at (1.5,-1.4) {$\bar{1}$};
\draw[-] (1,-1) -- (0,0)  node [above, label=right: 0] {};
\draw[-] (1,-1) -- (0,-2) node [below, label=right:1] {};
\draw[-] (1,-1) -- (2,-1) node {};
\draw[-] (2,-1) -- (3,0) node [above, label=left: 1] {};
\draw[-] (2,-1) -- (3,-2) node [below, label=left:0] {};
\node[circle,draw=black, fill=white, inner sep=0pt,minimum size=4pt] (b) at (3,0) {};
\node[circle,draw=black, fill=white, inner sep=0pt,minimum size=4pt] (e) at (1,-1) {};
\end{tikzpicture}
\hspace{.3in}
\raisebox{.5in}{\begin{tikzpicture}[scale=.8] 
\node (a) at (0,0) {$\bullet$};
\draw[-] (0,0)  to[out=-50,in=230] (2,0)  node [below, label=right:1] {};
\draw[-] (2,0)  to[out=230,in=-50] (0,0)  node [below, label=left:1] {};
\node[circle,draw=black, fill=white, inner sep=0pt,minimum size=4pt] (b) at (2,0) {};
\end{tikzpicture}} 
\end{center}
\begin{center}
\begin{tikzpicture}[scale=.7] 
\node (a) at (3,0) {$\bullet$};
\node (f) at (1,-1) {$\bullet$};
\node(g) at (1.5,-1.4) {$1$};
\draw[-] (1,-1) -- (0,0)  node [above, label=right: $\bar{1}$] {};
\draw[-] (1,-1) -- (0,-2) node [below, label=right:0] {};
\draw[-] (1,-1) -- (2,-1) node {};
\draw[-] (2,-1) -- (3,0) node [above, label=left:0] {};
\draw[-] (2,-1) -- (3,-2) node [below, label=left:
$\bar{1}$] {};
\node[circle,draw=black, fill=white, inner sep=0pt,minimum size=4pt] (b) at (0,0) {};
\node[circle,draw=black, fill=white, inner sep=0pt,minimum size=4pt] (e) at (2,-1) {};
\end{tikzpicture}
\hspace{.3in}
\begin{tikzpicture}[scale=.7] 
\node (a) at (3,0) {$\bullet$};
\node (f) at (1,-1) {$\bullet$};
\node(g) at (1.5,-1.4) {$1$};
\draw[-] (1,-1) -- (0,0)  node [above, label=right: 0] {};
\draw[-] (1,-1) -- (0,-2) node [below, label=right:$\bar{1}$] {};
\draw[-] (1,-1) -- (2,-1) node {};
\draw[-] (2,-1) -- (3,0) node [above, label=left:0] {};
\draw[-] (2,-1) -- (3,-2) node [below, label=left:$\bar{1}$] {};
\node[circle,draw=black, fill=white, inner sep=0pt,minimum size=4pt] (b) at (0,0) {};
\node[circle,draw=black, fill=white, inner sep=0pt,minimum size=4pt] (e) at (2,-1) {};
\end{tikzpicture}
\hspace{.3in}
\begin{tikzpicture}[scale=.7] 
\node (a) at (3,0) {$\bullet$};
\node (f) at (1,-1) {$\bullet$};
\node(g) at (1.5,-1.4) {$1$};
\draw[-] (1,-1) -- (0,0)  node [above, label=right: 0] {};
\draw[-] (1,-1) -- (0,-2) node [below, label=right:$\bar{1}$] {};
\draw[-] (1,-1) -- (2,-1) node {};
\draw[-] (2,-1) -- (3,0) node [above, label=left:$\bar{1}$] {};
\draw[-] (2,-1) -- (3,-2) node [below, label=left:0] {};
\node[circle,draw=black, fill=white, inner sep=0pt,minimum size=4pt] (b) at (0,0) {};
\node[circle,draw=black, fill=white, inner sep=0pt,minimum size=4pt] (e) at (2,-1) {};
\end{tikzpicture}
\hspace{.3in}
\raisebox{.5in}{\begin{tikzpicture}[scale=.8] 
\node (a) at (2,0) {$\bullet$};
\draw[-] (0,0)  to[out=-50,in=230] (2,0)  node [below, label=right:$\bar{1}$] {};
\draw[-] (2,0)  to[out=230,in=-50] (0,0)  node [below, label=left:$\bar{1}$] {};
\node[circle,draw=black, fill=white, inner sep=0pt,minimum size=4pt] (b) at (0,0) {};
\end{tikzpicture}} 
\end{center}
\caption{Khovanov--Kuperberg's inductive web growth rules.}
\label{fig:growthrules}
\end{figure}

\begin{remark}\label{rem:tricolor}
We use the slightly modified but equivalent web growth rules of \cite{petersen2009promotion}. Notice that using this version of the growth rules, each interior vertex in the resulting web is adjacent to exactly one edge with each label. We will use this property in later sections.
\end{remark}

\begin{figure}[h!]
\begin{center}
\includegraphics[width=2in]{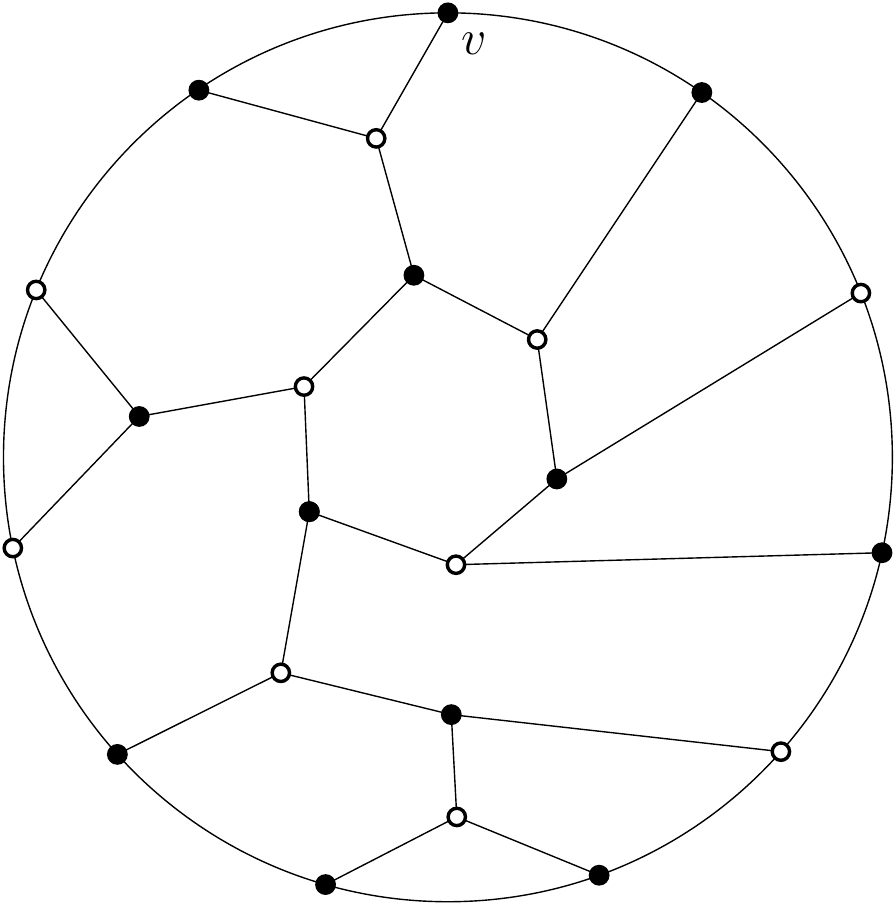}\hspace{1in}
\includegraphics[width=1.8in]{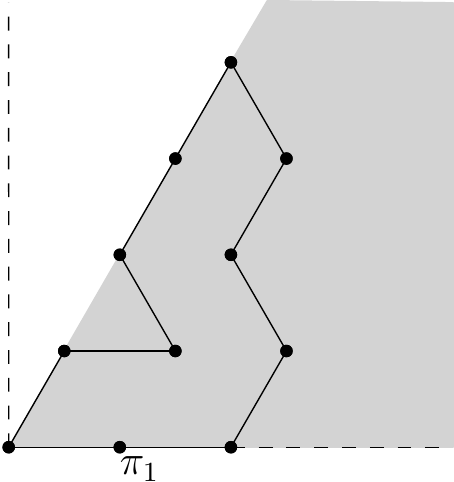}\vspace{.3in}
\includegraphics[width=6in]{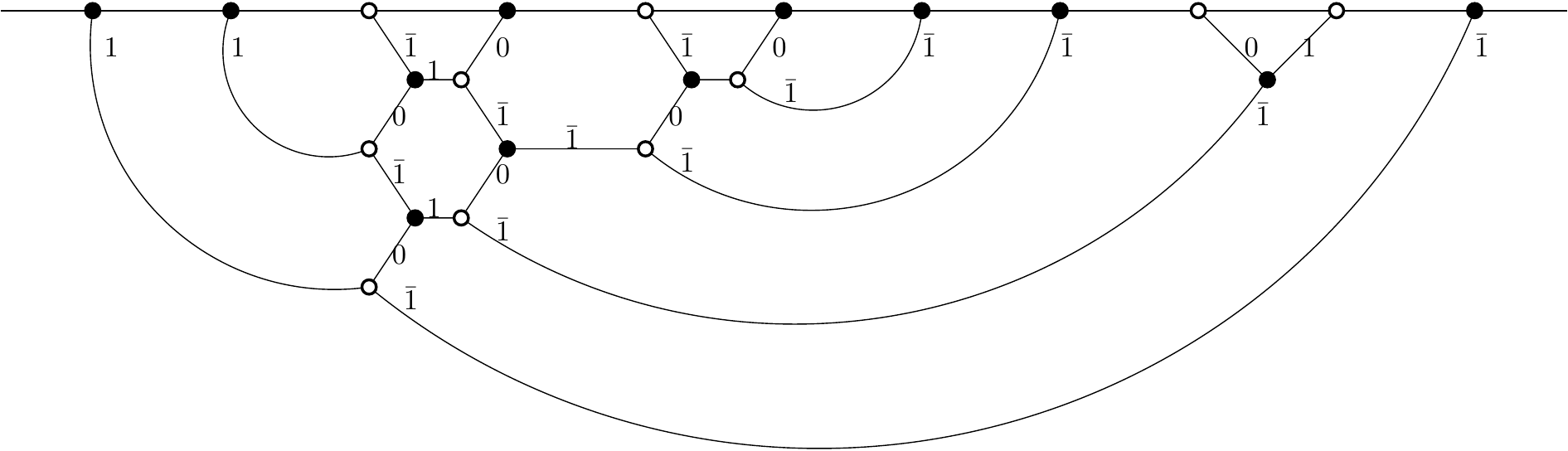}
\end{center}
\caption{The top left image is a web $D$ with marked chosen leftmost vertex $v$. The top right image shows the corresponding dominant path in the dominant Weyl chamber. The image at the bottom is an illustration of the web growth rules beginning with the signature and state string for $D$, which can easily be read from the image.}
\label{fig:growthfigure}
\end{figure}

Khovanov and Kuperberg prove the following results.

\begin{proposition}\cite{khovanov1999web}
\begin{enumerate}
\item The web built from a signature and state string does not depend on the order in which the growth rules are carried out.
\item The growth algorithm on a dominant signature and state string does not terminate until the strings have length 0.
\end{enumerate}
\end{proposition}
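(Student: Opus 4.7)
The plan is to treat the two parts separately: part (1) is a confluence statement for the rewriting system on boundary strings defined by the growth rules, and part (2) is a nontermination-until-empty statement for dominant inputs.

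For part (1), I would view the growth algorithm as a term rewriting system acting on the current boundary string (the signature, together with the labels on the pending half-edges stemming from the interior). Each rule in Figure~\ref{fig:growthrules} modifies at most two adjacent positions: either three of the six Y-patterns or one of the two cup patterns. Two rule applications at disjoint index ranges trivially commute because they touch disjoint portions of the boundary. The substantive step is local confluence: if two rules can fire at overlapping positions (necessarily sharing a single boundary vertex), I would check in each such case that the two resulting strings have a common descendant, using additional rule applications as needed. This is a finite case check, since the number of distinct overlapping patterns is small once one fixes the signatures $\{\bullet\bullet,\bullet\circ,\circ\bullet,\circ\circ\}$ of the three consecutive boundary vertices involved. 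Combining local confluence with an obvious termination argument (each application strictly decreases a well-founded measure such as the number of half-edges), Newman's lemma then gives global confluence, which is exactly the independence-of-order statement.

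For part (2), I would argue the contrapositive: if the algorithm halts on a nonempty input then the input is not dominant. Halting on a nonempty string means no adjacent pair $(j_a,s_a),(j_{a+1},s_{a+1})$ matches the left-hand side of any rule. The key geometric observation is that, in the weight lattice of $\mathrm{sl}(3)$, there are precisely six ``unit'' vectors $\pm\mu_{(1,\bullet)}, \pm\mu_{(0,\bullet)}, \pm\mu_{(\bar1,\bullet)}$, and the union of the six Y-patterns with the two cup patterns corresponds exactly to the pairs $(\mu_a,\mu_{a+1})$ whose partial sum lies weakly outside the dominant chamber relative to the position $\pi_{a-1}$. Thus absence of a matching pair means every step of the path $\pi_0,\pi_1,\ldots,\pi_n$ strictly advances the path outward within the dominant chamber, from which I would deduce $\pi_n\neq \pi_0$ unless $n=0$. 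Concretely, I would pick a position $a$ maximizing $\langle \pi_a, \rho\rangle$ for $\rho$ a strictly dominant direction and check that the pair $(\mu_a,\mu_{a+1})$ at that peak must appear among the rule patterns, contradicting the no-match hypothesis whenever $n>0$.

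The main obstacle is the local confluence case analysis in part (1): the number of overlapping triples is modest but each must be resolved explicitly, and some resolutions require several follow-up rule applications before the two descendants coincide. The modified growth rules adopted in Remark~\ref{rem:tricolor}, which force every interior vertex to have exactly one edge of each label $\{1,0,\bar 1\}$, are what make this bookkeeping tractable by sharply restricting which patterns can legally appear adjacent to which. Given that restriction, both parts reduce to finite, mechanical verifications.
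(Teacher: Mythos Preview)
The paper does not actually prove this proposition; it is quoted from \cite{khovanov1999web} and stated without argument. So there is no ``paper's proof'' to compare against, and your proposal should be judged on its own.

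Your overall architecture---Newman's lemma for (1), analysis of stuck configurations for (2)---is the natural one, but as written the proposal has real gaps. The growth rules in Figure~\ref{fig:growthrules} are not only the six Y-patterns and the two cups: the second and third rows also contain six H-shaped ``cross'' rules, each consuming two adjacent half-edges of opposite color and emitting \emph{two} new half-edges. Your sketch never mentions these, and they break both of your key steps. For (1), your termination measure ``number of half-edges'' is not decreased by an H-rule, so Newman's lemma does not apply without a subtler well-founded ordering (for instance, one tracking the multiset of labels or a potential that penalizes mixed-color adjacencies). For the local-confluence case analysis, the overlapping triples now include Y/H and H/H overlaps, which substantially enlarges the finite check you describe and in some cases forces the common descendant to be reached only after several H-moves on each side.

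For (2), your claimed bijection between ``rule patterns'' and ``pairs $(\mu_a,\mu_{a+1})$ whose partial sum lies weakly outside the dominant chamber relative to $\pi_{a-1}$'' is not correct once the H-rules are included: those rules fire on mixed-color pairs such as $(1,\bullet)(0,\circ)$ whose weight sum can point strictly into the chamber. The right statement is rather that, for a \emph{dominant} path, at any interior local maximum (in your $\langle\cdot,\rho\rangle$ sense) the adjacent pair must match one of the fourteen rule patterns; proving this requires separately handling the same-color case (Y or cup) and the opposite-color case (H), and the latter is where dominance of the full path, not just the local step, is used. Your peak argument can be made to work, but only after you enumerate the opposite-color adjacencies and check that dominance forces one of the six H-patterns to apply there.
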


This growth algorithm has an inverse for dominant signature and state strings. Given a web, Khovanov and Kuperberg describe how to construct its corresponding signature and state string using a \textit{minimal cut path algorithm}, which is inverse to the growth rules. We do not describe this algorithm here but use its existence to emphasize the fact that there is a one-to-one correspondence between (irreducible, non-elliptic) webs with a chosen leftmost vertex and dominant signature and state strings. 

Given a web $D$ with chosen leftmost vertex $v_1$ and boundary vertices read in clockwise order $v_1,\ldots,v_n$, we say that the \textit{state of boundary vertex $v_i$} is the state in the $i$th component of the corresponding signature and state string. For the web in Figure~\ref{fig:growthrules} with indicated leftmost vertex, we have that the state of $v_3$ is $\bar{1}$ and the state of $v_4$ is 0.

We call the web consisting of one edge between a black boundary vertex and a white boundary vertex the \textit{identity web}. In future sections, we will wish to restrict our attention to webs that do not contain the identity web as a disjoint component. See Figure~\ref{fig:identityweb}. 

\begin{figure}[h!]
\begin{center}
\includegraphics[width=3in]{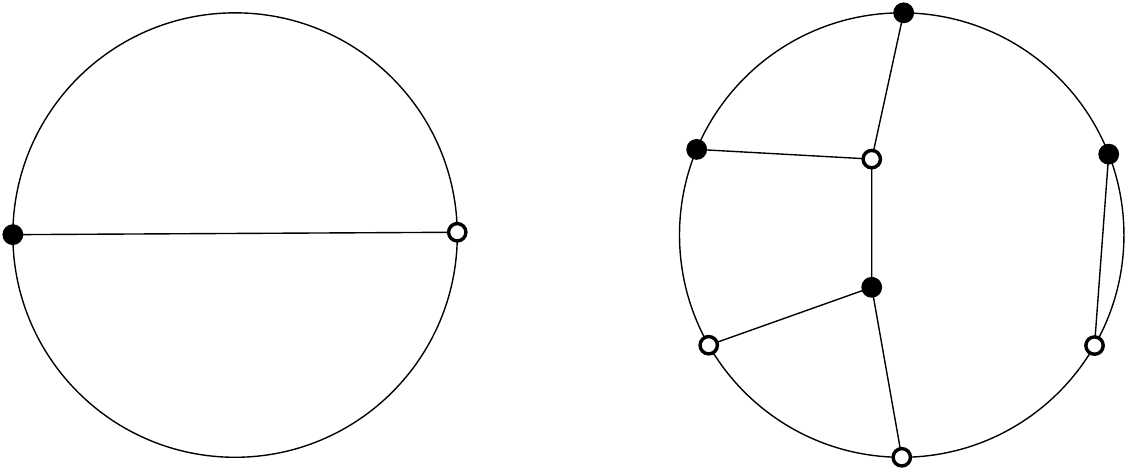}
\end{center}
\caption{The identity web and a web containing the identity web as a disjoint component.}
\label{fig:identityweb}
\end{figure}

\begin{proposition}\label{prop:hitextremerays}
Suppose $D$ is a web with $n$ boundary vertices and chosen leftmost vertex $v$ that does not contain the identity web. Denote the corresponding dominant path by $\pi=(\pi_0,\pi_1,\ldots,\pi_n)$, where $\pi_0=\pi_n$ is the origin. 
\begin{enumerate}
\item If $v$ is black, then the first $\pi_k$ with $k>0$ that lies on the upper extreme ray does not also lie on the lower extreme ray.
\item If $v$ is white, then the first $\pi_k$ with $k>0$ that lies on the lower extreme ray does not also lie on the upper extreme ray.
\end{enumerate}
\end{proposition}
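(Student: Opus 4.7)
By the symmetry $\bullet \leftrightarrow \circ$, $1 \leftrightarrow \bar{1}$ (which swaps the two extreme rays while preserving the dominant chamber and fixing the identity web up to relabeling of the two endpoints), statements (1) and (2) are equivalent, so I treat only (1). To control the computation, I pass to the fundamental weight basis $\omega_1,\omega_2$ with $\omega_1=(1,\bullet)$ and $\omega_2=(\bar{1},\circ)$; the remaining four weights become $\omega_1-\omega_2,\, -\omega_1+\omega_2,\, -\omega_1,\, -\omega_2$. Writing $\pi_k = a_k\omega_1+b_k\omega_2$, the dominant chamber is $\{a,b\ge 0\}$, the lower extreme ray is $\{b=0\}$, and the upper extreme ray is $\{a=0\}$. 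Since $v$ is black, dominance forces $\mu_1=(1,\bullet)=\omega_1$, so $\pi_1=(1,0)$.

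I argue by contradiction: suppose the first $k>0$ with $\pi_k$ on the upper extreme ray is some $k^*$ with $\pi_{k^*}=0$. Then $a_k\ge 1$ for all $0<k<k^*$. Checking the six admissible weights, the only choice for $\mu_{k^*}$ compatible with $a_{k^*-1}\ge 1$, $b_{k^*-1}\ge 0$, $\pi_{k^*}=0$, and the requirement that $\pi_{k^*-1}$ not already be on the upper ray, is $\mu_{k^*}=-\omega_1=(1,\circ)$, forcing $\pi_{k^*-1}=(1,0)$; in particular $v_{k^*}$ is a white boundary vertex with state $1$. The prefix path $\pi_0,\ldots,\pi_{k^*}$ is itself a dominant loop, so by the Khovanov--Kuperberg bijection it corresponds to a unique web $D_1$ on boundary $v_1,\ldots,v_{k^*}$, and running the growth algorithm on the full signature and state string exhibits $D$ as the side-by-side disjoint union (in the disk) of $D_1$ and a web $D_2$ on $v_{k^*+1},\ldots,v_n$ (when $k^*=n$, $D_2$ is empty and $D=D_1$). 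It therefore suffices to show that $D_1$ contains the identity web, contradicting the standing hypothesis on $D$.

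To prove $D_1$ contains the identity web, I propose induction on $k^*$. The base case $k^*=2$ gives signature and state string $((1,\bullet),(1,\circ))$, and the growth rules of Figure~\ref{fig:growthrules} produce exactly the identity web. For $k^*>2$, I apply the growth algorithm to the pair $v_1,v_2$ at the left end of the signature and state string, obtaining a shorter dominant string whose first weight is still $\omega_1$ (since the black boundary vertex corresponding to $v_1$ is never consumed by these rules unless paired with a white neighbor of state $1$, in which case the identity web already appears). The inductive hypothesis then produces an identity component inside the shorter web, which lifts to an identity component of $D_1$.

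The main obstacle is the case analysis required in the inductive step: for each applicable local growth rule at $v_1$, I must compute how the pair $(\mu_1,\mu_2)$ is replaced by a single weight $\mu'$, verify that the new partial sum $\mu'$ remains in the dominant chamber without touching the $a=0$ axis, and verify that every identity component of the shortened web survives under the inverse reduction (i.e., corresponds to an identity component of $D_1$). This case check reduces to examining the finitely many pairs $(j_1,s_1),(j_2,s_2)$ allowed by dominance and the six local rules in Figure~\ref{fig:growthrules}, and in each case reading off the effect on the weight lattice path. Once this verification is complete, the existence of the identity component in $D_1$ (hence in $D$) contradicts the hypothesis on $D$, completing the proof of (1) and, by the initial symmetry, of (2).
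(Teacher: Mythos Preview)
Your setup is correct and essentially matches the paper: you correctly argue that if the first return to the upper ray is at the origin, then $\mu_{k^*}=(1,\circ)$, $\pi_{k^*-1}=(1,0)$, and the prefix $((j_1,s_1),\ldots,(j_{k^*},s_{k^*}))$ is itself dominant, so the web splits. The divergence, and the gap, is in the final step where you try to show that the prefix web $D_1$ contains the identity web.

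Your induction on $k^*$ does not go through as written. First, a growth rule need not be applicable at the pair $(v_1,v_2)$: if $v_2$ is black with state $1$ (i.e.\ $\mu_2=\omega_1$, which is certainly allowed), none of the rules in Figure~\ref{fig:growthrules} fires there, so you cannot reduce at the left end at all. Second, your parenthetical claim that ``$v_1$ is never consumed unless paired with a white neighbour of state $1$'' is false: if $v_2$ is black with state $0$ or $\bar 1$, the Y-rule merges $v_1$ and $v_2$ into a single \emph{white} vertex, so the shortened string no longer starts with weight $\omega_1$ and your inductive hypothesis (which is case (1), for a black leftmost vertex) no longer applies. Third, even granting a reduction, an identity component in the shortened web need not lift to one in $D_1$: the new ``boundary'' vertex produced by a Y-rule is an \emph{internal} vertex of $D_1$, so an arc involving it is not a disjoint identity edge of $D_1$. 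The ``case check'' you defer to would have to confront all three of these structural failures, not merely verify weights.

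The paper avoids this entirely with a one-line translation argument that you are one step away from: since $a_k\ge 1$ for all $0<k<k^*$, the shifted path $(\pi_1-\omega_1,\ldots,\pi_{k^*-1}-\omega_1)$ is itself a dominant loop, so the inner substring $((j_2,s_2),\ldots,(j_{k^*-1},s_{k^*-1}))$ already grows (by Khovanov--Kuperberg) into a complete web $D'$ with no dangling edges. Growing the full prefix then leaves only the black $v_1$ with state $1$ and the white $v_{k^*}$ with state $1$ unmatched, and the arc rule joins them into a disjoint identity edge. This replaces your induction with a single application of the bijection and gives the contradiction immediately.
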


\begin{proof}

First assume $v$ is black, and denote the signature and state string for $D$ by $((j_1,s_1),\ldots,(j_n,s_n))$. Suppose to the contrary that none of $\pi_2,\ldots,\pi_{n-1}$ lie on the upper extreme ray and not the lower extreme ray. Since $\pi$ ends at the origin, we know there is some first $\pi_\ell$ that lies on both extreme rays, i.e., on the origin, with $0<\ell\leq n$. Consider the dominant path $\pi'=(\pi_0,\ldots,\pi_\ell)$. We know that the first step of $\pi'$ is in direction $(1,\bullet)$ and the last step must be in direction $(1,\circ)$, otherwise $\pi_{\ell-1}$ would lie on only the upper extreme ray. It follows that we can horizontally translate path $\pi''=(\pi_1,\ldots,\pi_{\ell-1})$ one unit left and obtain another dominant path: $(\pi_1-\mu_1,\ldots,\pi_{\ell-1}-\mu_1)$. Therefore applying the web growth rules to the corresponding section of the signature and state string for $D$, $((j_2,s_2),\ldots,(j_{\ell-1},s_{\ell-1}))$, yields a web $D'$ contained in $D$. Applying the web growth rules to $((j_1,s_1),\ldots,(j_\ell,s_\ell))$ hence gives a disjoint union of $D'$ with the identity web. We have thus shown that $D$ contains the identity web, which is a contradiction.

A similar argument holds if $v$ is white.
\end{proof}

\subsection{Webs in $\mathcal{M}_n$ and tableaux}
Let $\mathcal{M}_n$ denote the set of  webs with $3n$ boundary vertices, all of which are the same color. We will assume that all boundary vertices are black. Note that in this setting, we may ignore the signature of a web in $D\in\mathcal{M}_n$ with chosen leftmost vertex and instead focus on its state string. We consider its state string to be a word $w(D)$ in the alphabet $\{1,0,\bar{1}\}$, and we refer to the word as $w$ if the corresponding web is clear from context.

Using $w(D)=w_1\cdots w_{3n}$ for a web $D\in\mathcal{M}_n$, we can easily associate a standard Young tableau $T(w(D))$ of shape $(n,n,n)$ as follows. Fill the top row of $T(w(D))$ with the indices of the 1's in $w(D)$, the second row of $T(w(D))$ with the indices of the 0's in $w(D)$, and the third row with the indices of the $\bar{1}$'s of $w(D)$. For example, the word $w(D)=1101\bar{1}00\bar{1}\bar{1}$ corresponds to the standard Young tableau below.
\begin{center}
\raisebox{-.1in}{$T(w(D))=$}
\begin{ytableau}
1 & 2 & 4 \\
3 & 6 & 7 \\
5 & 8 & 9
\end{ytableau}
\end{center}

\section{Rotation of webs in $\mathcal{M}_n$ and promotion}\label{sec:blackpromotionandrotation}
We now review the results of T.K. Peterson, P. Pylyavksyy, and B. Rhoades for webs whose boundary vertices are all the same color. In \cite{petersen2009promotion}, the authors reinterpret the action of promotion on rectangular standard Young tableaux of shape $(n,n,n)$ as counterclockwise rotation of webs in $\mathcal{M}_n$. We will again assume all boundary vertices are black. In what follows, if $D$ is a web with chosen leftmost vertex $v$, let $p(D)$ denote the web with chosen leftmost vertex the next vertex reached traveling clockwise around the boundary from $v$. In other words, $p(D)$ is the result of rotating $D$ one vertex counterclockwise. 

We first define the \textit{left cut} and \textit{right cut} starting at boundary vertex $v$ of a web $D$. 

\begin{definition}[\cite{petersen2009promotion}]
The \textit{left cut} starting at $v$ is defined to be the path starting from $v$ obtained by first traveling along the edge adjacent to $v$ away from the boundary, turning left at the first vertex encountered, right at the next vertex encountered, left at the third vertex, etc. until we reach the boundary of $D$. The \textit{right cut} starting at $v$ is defined analogously starting with a right turn at the first vertex encountered, followed by a left turn, etc. until we reach the boundary of $D$.
\end{definition}

Let $v^l$ denote the boundary vertex at the end of the left cut starting at $v$ and $v^r$ denote the boundary vertex at the end of the right cut starting at $v$.

\begin{lemma}[Lemma 3.2 \cite{petersen2009promotion}]\label{lem:cutsdon'tintersect}
For any $D\in\mathcal{M}_n$ and for any boundary vertex $v$ of $D$, the left and right cuts do not intersect each other after the initial edge and do not self-intersect. In other words, $v$, $v^l$, and $v^r$ are distinct.
\end{lemma}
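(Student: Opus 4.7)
My plan is to argue by contradiction, leveraging the non-elliptic hypothesis that every internal face has at least six sides together with the fact that interior vertices of $D$ have degree exactly three. Assume the conclusion fails: the left cut self-intersects, the right cut self-intersects, or the two cuts meet somewhere after their initial edge (which happens precisely when at least two of $v, v^l, v^r$ coincide). In any such case, truncate the cut walks at the first offending vertex; this yields a simple closed curve $\gamma$ in the disk (together with a chord or an initial tail shared by the two cuts), which bounds a closed region $R\subset D$.

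First I would classify the corners of $R$ along $\gamma$. At every interior vertex $u\in\gamma$ the three incident edges split into the incoming cut edge, the outgoing cut edge, and a third edge whose position relative to $R$ is dictated by the alternating left/right turn rule that defined the cut. Thus at each such $u$ exactly one of the three face-corners at $u$ lies inside $R$ when the cut turns into $R$ and exactly two lie inside when it turns away, or vice versa. The start and end of $\gamma$ — where the two traversals rejoin — contribute one or two additional corners that admit a short case analysis into \emph{LL}, \emph{RR}, and \emph{LR} rejoinings. This produces an exact formula for the total interior angle sum of $R$ along $\gamma$.

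Next I would apply a discrete Gauss--Bonnet / Euler argument to the planar cell complex that $D$ cuts out of the closed region $R$. If $V,E,F$ denote its vertex, edge, and face counts then $V-E+F=1$; double-counting edge--face incidences using the inequality $\geq 6$ on each bounded internal face gives $2E \geq 6(F-1) + |\gamma|$, and double-counting edge--vertex incidences using $\deg=3$ at interior vertices gives a companion identity in terms of $V_\circ$ (the vertices strictly inside $R$) and the degrees along $\gamma$. Eliminating $V,E,F$ between these relations converts the non-elliptic bound into a lower bound on the total interior angle of $R$ in units of $2\pi/3$ (the vertex angle). But the alternating L/R pattern forces this angle sum to be strictly smaller, a contradiction.

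The main obstacle will be carrying out the corner bookkeeping cleanly and uniformly. In particular, the contribution of a vertex on $\gamma$ to the interior of $R$ flips each time the cut turns the opposite way, so the parity of the turn sequence interacts with the rejoining geometry in a delicate fashion; and the initial edge of the cut emanates from a boundary vertex of $D$ (a degree-1 vertex on $\partial\mathrm{disk}$, not a degree-3 interior vertex), which must be treated as a separate case, especially in ruling out the coincidence $v=v^l$ or $v=v^r$. Once these subcases are tabulated, each comparison between the angle sum forced by the alternating turns and the angle sum required by the $\geq 6$ bound produces the desired contradiction, completing the proof.
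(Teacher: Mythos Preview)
The paper you are working from does not actually prove this lemma: it is quoted as Lemma~3.2 of Peterson--Pylyavskyy--Rhoades and used as a black box, with no argument supplied. So there is no in-paper proof to compare against.

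That said, your strategy---close up the first offending intersection into a simple closed curve $\gamma$ bounding a region $R$, count the interior corners along $\gamma$ forced by the alternating left/right turn rule, and play that count off against an Euler-characteristic inequality coming from the non-elliptic hypothesis (every internal face has at least six sides, every interior vertex has degree three)---is precisely the approach taken in the original Peterson--Pylyavskyy--Rhoades paper. In that sense your plan is correct and on target.

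What you have written, however, is a plan rather than a proof: the entire content of the argument lives in the ``corner bookkeeping'' that you yourself flag as the main obstacle but do not carry out. Concretely, you must verify that because the turns along $\gamma$ strictly alternate, the number of vertices of $\gamma$ whose third edge points \emph{into} $R$ differs from the number whose third edge points \emph{out of} $R$ by at most a bounded constant (determined by the rejoining type LL/RR/LR and by whether the boundary vertex $v$ lies on $\gamma$), and then show that this bound is incompatible with the combinatorial curvature inequality obtained from $V-E+F=1$, $2E\ge 6(F-1)+|\gamma|$, and the degree-three condition. Until those cases are actually tabulated and the two counts are shown to contradict each other, the argument remains a sketch; once they are, it becomes essentially the proof in \cite{petersen2009promotion}.
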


\begin{lemma}[Lemma 3.6 \cite{petersen2009promotion}]\label{lem:balanced} Let $D\in\mathcal{M}_n$ with leftmost vertex $v$, and consider vertices $v^l$ and $v^r$ obtained from the left cut and right cut starting at $v$.
\begin{enumerate}
\item Among the states of vertices preceding $v^l$ (inclusively), there are as many 1's as 0's. Further, $v^l$ is the leftmost vertex with this property.
\item Among the states preceding $v^r$ (inclusively), there are as many 0's as $\bar{1}$'s. Further, $v^r$ is the leftmost vertex to the right of $v^l$ with this property.
\end{enumerate}
\end{lemma}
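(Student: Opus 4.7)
My plan is to translate the arithmetic balance conditions into geometric conditions on the dominant path associated with $D$, and then to analyze the labels along the cuts using the Khovanov--Kuperberg growth rules.

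First I would reinterpret the conclusion geometrically. A direct computation with the weights of $(1,\bullet)$, $(0,\bullet)$, $(\bar{1},\bullet)$ shows that $\pi_i$ lies on the upper extreme ray precisely when the first $i$ states contain equally many $1$'s as $0$'s, and on the lower extreme ray precisely when they contain equally many $0$'s as $\bar{1}$'s. Thus the lemma becomes the assertion that $v^l$ is the first boundary vertex after $v$ at which $\pi$ returns to the upper extreme ray, and $v^r$ is the first such vertex after $v^l$ on the lower extreme ray.

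Second, I would verify the structural claim that edges along the left cut alternate between labels $1$ and $0$, while those along the right cut alternate between $0$ and $\bar{1}$. Remark~\ref{rem:tricolor} --- each interior vertex is adjacent to exactly one edge of each label --- is crucial here: combined with the growth rules in Figure~\ref{fig:growthrules}, one sees that the three labels occur in a fixed cyclic order around each interior vertex (opposite orientations at white and black vertices). A short case check then shows that a left turn at any interior vertex sends the incoming edge label to a specific outgoing edge label, so that only two of the three labels are ever visited along a given cut.

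Third, I would use the left cut to produce a sub-web. The cut together with the boundary arc from $v_1$ clockwise to $v^l$ encloses a sub-disk; splitting each interior vertex lying on the cut into two half-vertices produces a valid web $D'$ inside that sub-disk. The boundary of $D'$, read cyclically, consists of $v_1, \ldots, v^l$ from the original web interleaved with new boundary vertices whose states are precisely the cut-edge labels, all lying in $\{1, 0\}$ by the previous step. Because $D'$ is itself a web, its dominant path closes at the origin, so the sum of its boundary weights vanishes; a book-keeping check --- using that the cut alternates between black and white interior vertices and between labels $1$ and $0$ --- then shows that the cut's contribution lies on the upper extreme ray, which forces $\pi_{v^l}$ onto that ray as required for assertion (1).

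Finally, minimality follows because the left cut is the innermost separator of its type: if some $\pi_i$ with $1<i<v^l$ also lay on the upper extreme ray, the same sub-web construction applied to the arc from $v_1$ to $v_i$ would yield a smaller valid web bounded by a cut strictly shorter than the algorithmic one, contradicting how $v^l$ is produced. Assertion (2) for $v^r$ follows by the identical argument applied to the portion of the web lying to the right of the left cut. The hard part will be the sub-web step: verifying that splitting vertices along the cut preserves all four web axioms, and (more delicately) that the alternating structure of the cut weights pins $\pi_{v^l}$ precisely onto the extreme ray rather than at some generic interior point of the Weyl chamber.
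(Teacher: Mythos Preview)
The paper does not give its own proof of this lemma: it is quoted verbatim as Lemma~3.6 of \cite{petersen2009promotion} and used as a black box, with only the one-sentence reformulation in terms of the dominant path touching the extreme rays. So there is no proof here against which to compare your proposal.

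That said, a few remarks on the proposal itself. Your first step, the geometric reinterpretation, is exactly the reformulation the present paper gives just after the lemma, and is correct. Your second step, that the edge labels along the left cut alternate between $1$ and $0$ (and between $0$ and $\bar 1$ along the right cut), is the genuine combinatorial core and is indeed how the original Peterson--Pylyavskyy--Rhoades argument proceeds; the justification via the fixed cyclic order of labels around each interior vertex is the right mechanism. Where your outline becomes shaky is steps three and four. Cutting along the left cut and declaring that the enclosed region is again an irreducible web requires checking that no small faces are created by the split, and more importantly your ``book-keeping check'' that the cut's weight contribution lies on the upper extreme ray is asserted rather than argued; this is precisely the delicate point, and the original proof handles it by a more direct count of how many $1$'s and $0$'s lie on each side of the cut rather than by invoking closure of a sub-web's dominant path. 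Your minimality argument in step four is also not a proof as written: the existence of a shorter balanced prefix does not by itself contradict the definition of the left cut, which is a local turning rule and not an optimality condition; you would need to exhibit an actual alternating $1$/$0$ path to the earlier vertex, or argue as in the original that the cut is the unique separator compatible with the growth rules.
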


We can easily rephrase Lemma~\ref{lem:balanced} in terms of the dominant paths. The vertex $v^l$ corresponds to the first place that the dominant path returns to the upper extreme ray of the dominant chamber and $v^r$ corresponds to the first place after $v^l$ that the dominant path returns to the lower extreme ray. 

\begin{theorem}\label{thm:allblackwordchange}\cite{petersen2009promotion}
Let $D\in\mathcal{M}_n$ be a web with chosen leftmost vertex and corresponding word $w=w_1w_2\cdots w_{3n}$. Suppose $v^l$ corresponds to $w_a$ and $v^r$ to $w_b$. Then the word obtained after rotating $D$ one vertex counterclockwise, $w'=w(p(D))$, is 
\[w'=w_2\cdots w_{a-1}1w_{a+1}\cdots w_{b-1}0w_{b+1}\cdots w_{3n}\bar{1}.\]
\end{theorem}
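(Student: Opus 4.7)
My plan is to prove the theorem in two stages: first verify that the proposed word $w'$ is dominant, so that it corresponds to some web via the Khovanov--Kuperberg bijection; then show that the resulting web is $p(D)$.

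For dominance, I would use the coordinate description of the dominant chamber, which translates to the condition that at every prefix $w^k$ the count of $1$'s is at least the count of $0$'s, and the count of $0$'s is at least the count of $\bar 1$'s. Since $w$ is dominant and $v_1$ is black, dominance already forces $w_1 = 1$, and Lemma~\ref{lem:balanced} forces $w_a = 0$ and $w_b = \bar 1$. I would then check these two inequalities prefix by prefix for $w'$, splitting into cases according to whether the prefix ends before, at, or after the modified positions $a-1$ and $b-1$. The non-trivial constraints are the strict inequalities needed at these boundary positions, which reduce to the minimality statements in Lemma~\ref{lem:balanced} (the count of $1$'s strictly exceeds the count of $0$'s for $k < a$, and the count of $0$'s strictly exceeds the count of $\bar 1$'s for $a < k < b$), together with the fact that $\pi_a$ lies on only the upper extreme ray. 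The latter follows from Proposition~\ref{prop:hitextremerays} once we observe that $D$ may be assumed free of identity-web components.

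For the identification with $p(D)$, since the Khovanov--Kuperberg bijection is one-to-one, it suffices to show that the web constructed from $w'$ equals the counterclockwise rotation of $D$. My plan here is to exhibit this geometrically: the left and right cuts from $v_1$ delimit three subregions of $D$ which reassemble, after the cuts are ``opened up,'' into $p(D)$ in the precise pattern prescribed by the growth rules. The edge incident to $v_1$ becomes the new terminal edge in $p(D)$ carrying label $\bar 1$ (matching the appended letter), and the boundary segments at $v^l$ and $v^r$ pick up new labels $1$ and $0$ respectively.

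The main obstacle I anticipate is making the cut-opening argument rigorous, since the cuts can pass through many interior vertices and interact with the growth rules of Figure~\ref{fig:growthrules} in intricate ways. I expect to proceed by induction on the combined length of the two cuts, with a base case in which $v^l = v_2$ and $v^r = v_3$ handled by a direct application of a single growth rule at the first three vertices, and an inductive step that peels off one triangle along one of the cuts at a time. An alternative would be to bypass direct construction entirely by applying the inverse minimal-cut-path algorithm to $p(D)$ and verifying that it recovers $w'$.
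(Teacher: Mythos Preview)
The paper does not contain a proof of this theorem. Theorem~\ref{thm:allblackwordchange} is stated with a citation to \cite{petersen2009promotion} and appears in Section~\ref{sec:blackpromotionandrotation}, which is explicitly a summary of prior work; no argument is supplied beyond the statement and some rephrasings. So there is nothing in the paper to compare your proposal against.

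That said, a few remarks on your outline. Your dominance check is essentially correct and complete: the prefix inequalities reduce exactly to the strict inequalities coming from the minimality clauses in Lemma~\ref{lem:balanced}, plus the single extra fact that $\pi_a$ does not lie on the lower extreme ray. Your appeal to Proposition~\ref{prop:hitextremerays} for that last point is fine, but note that for $D\in\mathcal{M}_n$ the hypothesis ``does not contain the identity web'' is automatic (all boundary vertices are black), so nothing needs to be assumed.

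The second stage---identifying the web built from $w'$ with the rotated web $p(D)$---is where all the content lies, and your sketch here is still only a plan. The cut-opening picture you describe is indeed the idea used in \cite{petersen2009promotion}, but the proof there does not run an induction on cut length in the way you suggest; rather it analyzes how the growth rules propagate along the two cuts and shows that the labelings on either side of each cut match after rotation. Your proposed induction on the combined length of the cuts could likely be made to work, but the inductive step (``peeling off one triangle'') interacts with both cuts simultaneously when they share the initial edge, and your base case $v^l=v_2$, $v^r=v_3$ does not cover, for instance, the situation where the initial edge of $v_1$ goes directly to a boundary vertex. If you pursue this route you should expect to handle several local configurations separately. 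The alternative you mention---running the minimal-cut-path algorithm on $p(D)$---is closer in spirit to what \cite{petersen2009promotion} actually does.
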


In other words, upon one rotation counterclockwise, the labels of the web $D$ change only at three vertices, the leftmost vertex $v$, $v^l$, and $v^r$. These vertices can be identified either as 
\begin{itemize}
\item $v$: the leftmost vertex,
\item $v^l$: the first place there are as many 0's as 1's is $w(D)$, and
\item $v^r$: the place after $v^l$ there are as many $\bar{1}$'s as 0's in $w(D)$,
\end{itemize}
or as 
\begin{itemize}
\item $v$: the leftmost vertex,
\item $v^l$: the first place the corresponding dominant path returns to the upper extreme ray of the dominant chamber, and 
\item $v^r$: the first place after $v^l$ the corresponding dominant path returns to the lower extreme ray of the dominant chamber.
\end{itemize}

The first interpretation leads more easily to the result below after thinking about which entries in a tableau move upward during promotion, while the second interpretation is the one we will generalize in the case that not all boundary vertices of $D$ are the same color. Notice that the first interpretation implies that the state of $v^l$ in $D$ is 0 and the state of $v^r$ is $\bar{1}$ in $D$.

\begin{corollary}\cite{petersen2009promotion}
For any web $D$ with fixed leftmost vertex and with black boundary vertices, we have
\[T(w(p(D)))=p(T(w(D))).\] That is, the tableau associated  with the rotation of $D$ is given by promotion of the tableau associated with $D$ itself.
\end{corollary}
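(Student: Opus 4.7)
The plan is to compare the two sides word-by-word. On one side, $w(p(D))$ is described explicitly by Theorem~\ref{thm:allblackwordchange}. On the other, the tableau $p(T(w(D)))$ determines a word via its row decomposition: position $i$ carries the symbol $1$, $0$, or $\bar{1}$ according to which row of $p(T(w(D)))$ contains $i$. It therefore suffices to show that after performing promotion on $T := T(w(D))$, the three rows of $p(T)$ are exactly the sets of positions of $1$s, $0$s, and $\bar{1}$s in the word $w'$ given by the theorem.

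First I analyze the jeu de taquin path of the $\bullet$ on the rectangular shape $(n,n,n)$. Since this shape has a single outer corner at $(3,n)$, the $\bullet$ traces a lattice path from $(1,1)$ to $(3,n)$ that only moves down or right. Let $c_1$ be the column where the $\bullet$ first passes from row 1 to row 2, and let $c_2$ be the column where it first passes from row 2 to row 3. A careful inspection of the slides shows that the only entries which change row under promotion are $T_{2,c_1}$ (which moves up to row 1) and $T_{3,c_2}$ (which moves up to row 2); all other entries remain in their original row, possibly shifting left within it. At the end, every label is decremented by $1$, and the vacated corner $(3,n)$ is filled by $3n+1$, which becomes $3n$ after decrementing.

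The crux is to identify $T_{2,c_1} = a$ and $T_{3,c_2} = b$, where $a$ and $b$ are the positions singled out in Theorem~\ref{thm:allblackwordchange}. By the jeu de taquin rule, $c_1$ is the smallest column with $T_{1,c_1+1} > T_{2,c_1}$. Since $T_{1,c}$ and $T_{2,c}$ are respectively the positions of the $c$-th $1$ and the $c$-th $0$ in $w$, the inequality $T_{1,c+1} > T_{2,c}$ says that by position $T_{2,c}$ we have seen exactly $c$ ones, equivalently that the dominant path satisfies $\#1 = \#0 = c$ at $\pi_{T_{2,c}}$, which is precisely the condition of lying on the upper extreme ray. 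Hence $T_{2,c_1}$ is the position of the first return of the path to the upper extreme ray, which by Lemma~\ref{lem:balanced} equals $a$. An analogous analysis of the row 2 to row 3 transition characterizes $c_2$ as the smallest $c \geq c_1$ with $\#0 = \#\bar{1}$ at $\pi_{T_{3,c}}$; Proposition~\ref{prop:hitextremerays} (which applies to any $D \in \mathcal{M}_n$, since such a web has all-black boundary and so cannot contain the identity web) guarantees that the path does not lie on the lower extreme ray at $\pi_a$, so $T_{3,c_2}$ is the first return to the lower extreme ray strictly after $a$, which is $b$.

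The main obstacle is the double translation between the column conditions on $T$ governing the jeu de taquin path and the path conditions in the weight lattice defining $a$ and $b$; carrying this out requires careful use of the column-strict condition on $T$ and the ballot inequalities $\#1 \geq \#0 \geq \#\bar{1}$. Once $T_{2,c_1} = a$ and $T_{3,c_2} = b$ are established, the three alterations in $w'$ from Theorem~\ref{thm:allblackwordchange} --- replacing $w_a = 0$ by $1$, replacing $w_b = \bar{1}$ by $0$, deleting $w_1 = 1$ and shifting the remaining letters left by one, and appending $\bar{1}$ at the new position $3n$ --- match exactly with the three entry movements in $p(T)$ described above, yielding $T(w(p(D))) = p(T(w(D)))$.
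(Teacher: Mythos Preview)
The paper does not supply its own proof of this corollary; it is quoted from \cite{petersen2009promotion}, accompanied only by the remark that ``the first interpretation leads more easily to the result below after thinking about which entries in a tableau move upward during promotion.'' Your argument is precisely the one this remark points toward---tracking the two entries of the $3\times n$ tableau that change row under jeu de taquin and matching them to the positions $a$ and $b$ of Theorem~\ref{thm:allblackwordchange} via the extreme-ray description of Lemma~\ref{lem:balanced}---and it is correct.

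One step deserves tightening. When you assert that $T_{3,c_2}$ is the \emph{first} return to the lower extreme ray strictly after $a$, the appeal to Proposition~\ref{prop:hitextremerays} only shows that $\pi_a$ itself is off the lower extreme ray; it does not immediately rule out a return at some $T_{3,c}$ with $c<c_1$ but $T_{3,c}>a$. The missing observation is short: if $\#0=\#\bar 1$ at $T_{3,c}$ then $T_{2,c+1}>T_{3,c}$, and for $c<c_1$ this yields $T_{3,c}<T_{2,c+1}\le T_{2,c_1}=a$, contradicting $T_{3,c}>a$. With that in place your identification $T_{3,c_2}=b$ is complete.
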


\begin{remark}
If we instead assume that a web  $D\in\mathcal{M}_n$ has all white boundary vertices, we see that 
\[w'=w_2\cdots w_{a-1}\bar{1}w_{a+1}\cdots w_{b-1}0w_{b+1}\cdots w_{3n}1.\]
In this case, $v^l$ can be described either as the first place there are as many 0's as $\bar{1}$'s or the first place the corresponding dominant path returns to the lower extreme ray of the dominant chamber, and $v^r$ can be described as the first place to the right of $v^l$ where there are as many $1$'s as $0$'s or the first place after $v^l$ the corresponding dominant path returns to the upper extreme ray of the dominant chamber. In particular, $v^l$ has state 0 and $v^r$ has state $\bar{1}$.
\end{remark}

\section{Oscillating Tableaux and Promotion}\label{sec:osctablandpromotion}

An \textit{oscillating tableau} of length $k$ is a sequence of $k+1$ partitions $(\lambda^0=\emptyset, \lambda^1,\ldots,\lambda^k)$ such that $\lambda^i$ is obtained from $\lambda^{i-1}$ by either adding or removing one box. A standard Young tableau is an example of an oscillating tableau where one box is added at each step. We now both generalize and restrict this notion by allowing $\lambda^i$ to have negative part sizes and parts of size 0 and by fixing that each $\lambda^i$ has $n$ parts for some fixed $n$.

We define a \textit{generalized partition} to be a finite non-increasing sequence of integers $\lambda=(\lambda_1\geq \lambda_2\geq\cdots\geq \lambda_n)$. We say that $\lambda$ is a \textit{generalized partition with $n$ parts} if it is possible to write $\lambda$ with $n$ components, where we are allowed to add zeros at the end of the partition when doing so preserves the weakly decreasing property of a generalized partition. For example, $(2,1)$ is a generalized partition with $n$ parts for any $n\geq 2$, as we consider $(2,1)$ to be the same as $(2,1,0)$, etc. We consider the empty partition $\emptyset$ to be the same as $(0,\ldots,0)$, and so $\emptyset$ can be written with any number of parts. However, $(5,5,3,0,-2,-2)$ is a generalized partition with 6 parts and cannot be written with a different number of parts. 

To each generalized partition, we associate a \textit{generalized Young diagram} by considering the generalized Young diagram to lie in the lower half plane of $\mathbb{Z}^2$ and allowing boxes to lie in both the third and fourth quadrant. Boxes corresponding to negative parts of the generalized partition are in the third quadrant and boxes corresponding to the positive parts are in the fourth quadrant. The generalized partition $(5,5,3,0,-2,-2)$ has generalized Young diagram shown below, where we color boxes in the third quadrant (i.e., boxes corresponding to negative part sizes) red.

\begin{center}
\ytableausetup{boxsize=.15in}
\begin{ytableau}
\none & \none & & & & & \\
\none & \none & & & & & \\
\none & \none & & &  \\
\none \\
*(red) & *(red)\\
*(red) & *(red)
\end{ytableau}
\end{center}

\begin{definition}
A \textit{generalized oscillating tableau} of length $k$ with $n$ parts is a sequence of $k+1$ generalized partitions $(\lambda^0=\emptyset, \lambda^1,\ldots,\lambda^k)$ such that $\lambda^i$ is obtained from $\lambda^{i-1}$ by either adding or removing one box and such that each $\lambda^i$ is a generalized partition with $n$ parts. We denote the set of such sequences by GOT$(k,n)$.
\end{definition}

To each element of GOT$(k,n)$, we can associate a set-valued filling of the union of boxes appearing in $\lambda^1,\ldots,\lambda^k$ by entering $i$ or $i'$ in the box that was added to or removed from, respectively, $\lambda^{i-1}$ to obtain $\lambda^i$. Within each box, we write the subset in increasing order. We identify a generalized oscillating tableau with the corresponding set-valued filling. Note that the shape of the generalized oscillating tableau need not be a generalized partition shape. We again color cells corresponding to negative cells red. We refer to the red part of the filling as the \textit{negative part} and the white part as the \textit{positive part}.

\begin{remark}
Note that it is not necessary to use both primed and unprimed entries in this construction as the location of the primed entries can easily be recovered from the analogous construction using only unprimed entries. Indeed, the entries in odd positions on the negative side and even positions on the positive side are exactly the entries with primes. However, we will use primed and unprimed entries for ease of understanding and notational convenience. 
\end{remark}

\begin{example}\label{ex:genosctab}
The generalized oscillating tableau \[(\emptyset, (1,0,0), \emptyset, (0,0,-1), (0,0,-2), (1,0,-2), (1,1,-2), (1,1,-1))\in\text{GOT}(7,3)\] corresponds to the filling on the left of shape $(1,1,-2)$. The sequence
\[(\emptyset,(0,0,-1), (0,-1,-1), (1,-1,-1), (1,0,-1),(1,0,-2),(1,1,-2),(1,0,-2),(1,0,-1),(1,0,0))\in \text{GOT}(9,3)\] corresponds to the filling on the right, which the does have generalized partition shape.
In contrast, the sequence
\[(\emptyset, (-1), \emptyset, (1), (2), (2,1))\] is not in GOT$(5,n)$ for any $n$ since the partition $(-1)$ can have only one part and the partition $(2,1)$ must have at least two parts.
\begin{center}
\ytableausetup{boxsize=.3in}
\begin{ytableau}
\none & \none & 12'5\\
\none & \none & 6 \\
*(red) 4'7  & *(red) 3'
\end{ytableau}\hspace{1in}
\begin{ytableau}
\none & \none & 3 \\
\none & *(red) 2'4 & 67' \\
*(red) 5'8  &*(red) 1'9
\end{ytableau}
\end{center}
\end{example}

Although a generalized oscillating tableau may not be of generalized partition shape, we can say something about its shape. In the proposition below, the size of the rows of the positive (resp. negative) part of generalized oscillating tableau $T$ is the number of white (resp. red) boxes in that row of $T$. For example, the tableau on the right in Example~\ref{ex:genosctab} has rows on the positive side of size 1,1, and 0, while the rows on the negative side have size 0, 1, and 2. 

\begin{proposition}
Let $T\in\text{GOT}(k,n)$. Then the positive part of $T$ has rows of weakly decreasing size, and the negative part of $T$ has rows of weakly increasing size.
\end{proposition}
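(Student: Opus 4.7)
The plan is to reduce the claim to the trivial fact that each intermediate shape $\lambda^i$ is by definition a weakly decreasing sequence of integers. The key bookkeeping step is a clean formula for the size of row $j$ of $T$ in terms of the trajectory $(\lambda^0,\ldots,\lambda^k)$.

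First I would observe that the underlying shape of $T$ is exactly the union of the shapes $\lambda^1,\ldots,\lambda^k$: every box that ever belongs to some $\lambda^i$ receives either an unprimed label (when it is added) or a primed label (when it is removed), and hence appears in $T$. Since the cells of row $j$ of $\lambda^i$ occupy a contiguous horizontal strip starting at column $1$ and extending either into the positive columns (if $\lambda^i_j>0$) or into the negative columns (if $\lambda^i_j<0$), one obtains the formulas
\begin{align*}
P_j &:= \#\{\text{positive boxes in row } j \text{ of } T\} \;=\; \max_{0\leq i \leq k} \max(\lambda^i_j,\,0), \\
N_j &:= \#\{\text{negative boxes in row } j \text{ of } T\} \;=\; \max_{0\leq i \leq k} \max(-\lambda^i_j,\,0).
\end{align*}

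Next, for every $i$ the generalized partition condition $\lambda^i_j \geq \lambda^i_{j+1}$ holds. Applying the nondecreasing function $x\mapsto \max(x,0)$ to this inequality gives $\max(\lambda^i_j,0)\geq \max(\lambda^i_{j+1},0)$ for each $i$; taking the maximum over $i$ then yields $P_j \geq P_{j+1}$. Applying instead the nonincreasing function $x\mapsto \max(-x,0)$ reverses the inequality to $\max(-\lambda^i_j,0)\leq \max(-\lambda^i_{j+1},0)$, and taking the maximum over $i$ gives $N_j \leq N_{j+1}$. This is precisely the claim: positive row sizes weakly decrease and negative row sizes weakly increase as $j$ grows.

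There is no serious obstacle. The only point requiring any care is writing down the max-formula for the row sizes of $T$ (and verifying that each intermediate box indeed contributes to the underlying shape of $T$); once that is in place, the proposition is immediate from the partition inequality $\lambda^i_j \geq \lambda^i_{j+1}$ together with the monotonicity of $x\mapsto \max(\pm x,0)$.
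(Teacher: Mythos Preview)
Your proof is correct and follows essentially the same approach as the paper: both arguments rest on the observation that the shape of $T$ is the union of the generalized partition shapes $\lambda^1,\ldots,\lambda^k$ and then invoke the inequality $\lambda^i_j\geq\lambda^i_{j+1}$. The paper phrases the final step cell-by-cell (if a positive cell $(i,j)$ lies in some $\lambda^s$, then so does $(i-1,j)$), whereas you phrase it via the row-size formulas $P_j=\max_i\max(\lambda^i_j,0)$ and $N_j=\max_i\max(-\lambda^i_j,0)$ together with the monotonicity of $x\mapsto\max(\pm x,0)$; these are two packagings of the same idea.
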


\begin{proof}
The result follows from the fact that the shape of $T$ is the union of $k+1$ generalized partition shapes with $n$ parts. Thus if cell $(i,j)\in T$ for $i>1,j>0$, $(i,j)\in\lambda^s$ for some $1\leq s \leq k$. Since $\lambda^s$ is a generalized partition, $(i-1,j)\in\lambda^s$ and thus $(i-1,j)\in T$. Similarly, if cell $(i,j)\in T$ for $i<n,j<0$, $(i,j)\in\lambda^s$ for some $1\leq s \leq k$. Since $\lambda^s$ is a generalized partition, $(i+1,j)\in\lambda^s$ and thus $(i+1,j)\in T$.
\end{proof}

\subsection{Webs and generalized oscillating tableaux}
Given a web $D$ with chosen leftmost vertex $v$ and $k$ boundary vertices, we may associate a generalized oscillating tableau of length $k$ with three parts as follows. First compute the signature and state string corresponding to $D$. The pairs in the signature and state string will correspond to the following actions on generalized partition $\lambda^i$.

\begin{center}
\begin{tabular}{|c| c|}
\hline 
\textcolor{white}{\huge{l}}$(1,\bullet)$\textcolor{white}{\huge{l}} & Add one to the first part of $\lambda^i$ to obtain $\lambda^{i+1}$\\
\hline
\textcolor{white}{\huge{l}}$(0,\bullet)$\textcolor{white}{\huge{l}} & Add one to the second part of $\lambda^i$ to obtain $\lambda^{i+1}$ \\
\hline
\textcolor{white}{\huge{l}}$(\bar{1},\bullet)$\textcolor{white}{\huge{l}} & Add one to the third part of $\lambda^i$ to obtain $\lambda^{i+1}$\\
\hline 
\textcolor{white}{\huge{l}}$(1,\circ)$\textcolor{white}{\huge{l}} & Subtract one from the first part of $\lambda^i$ to obtain $\lambda^{i+1}$ \\
\hline
\textcolor{white}{\huge{l}}$(0,\circ)$\textcolor{white}{\huge{l}} & Subtract one from the second part of $\lambda^i$ to obtain $\lambda^{i+1}$\\
\hline
\textcolor{white}{\huge{l}}$(\bar{1},\circ)$\textcolor{white}{\huge{l}} & Subtract one from the third part of $\lambda^i$ to obtain $\lambda^{i+1}$\\
\hline
\end{tabular}
\end{center}

Using the signature and state string, build the generalized oscillating tableau by reading the string left to right and using the table above to determine how to obtain the next generalized partition in the generalized oscillating tableau. For example, the tableau on the right in Example~\ref{ex:genosctab} comes from the signature and state string

\[((\bar{1},\circ), (0,\circ), (1,\bullet), (0,\bullet), (\bar{1},\circ), (0,\bullet), (0,\circ), (\bar{1},\bullet), (\bar{1},\bullet)).\]

It is clear that this procedure applied to a web will indeed produce a sequence of generalized partitions since dominant paths produce generalized partitions. This is because $\lambda^i_j$ gives the net number of steps taken in the $(j,\bullet)$ direction (i.e., the number of steps in direction $(j,\bullet)$ minus the number of steps in direction $(j,\circ)$) after $i$ steps in the dominant path, and being inside the dominant chamber is equivalent to saying that the net number of steps in direction $(j,\bullet)$ is weakly greater than the net number of steps in direction $(j+1,\bullet)$ for $j=1,2$ at each point in the path.

The fact that webs correspond to dominant signature and state strings gives us the following fact.

\begin{proposition}\label{prop:rowssamesize}
A generalized oscillating tableau $T=(\lambda^0,\ldots,\lambda^k)\in\text{GOT}(k,3)$ comes from a web $D$ if and only if $\lambda^k=(m,m,m)$ for some $m\in \mathbb{Z}$.
\end{proposition}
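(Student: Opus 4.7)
The plan is to transport the dominant-path characterization of signature-and-state strings (which are in bijection with webs via Khovanov--Kuperberg) into an explicit condition on the terminal generalized partition $\lambda^k$. The procedure described just before the proposition already converts any $T \in \text{GOT}(k,3)$ into a signature-and-state string, and it suffices to show that $T$'s associated string is dominant if and only if $\lambda^k = (m,m,m)$.

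First I would set up coordinates. Let $e_1, e_2, e_3$ denote the weight vectors associated to $(1,\bullet)$, $(0,\bullet)$, $(\bar 1,\bullet)$ respectively, so the weights for $(j,\circ)$ are $-e_j$. The key algebraic fact is that these three vectors lie at angles $0^\circ, 120^\circ, 240^\circ$, hence $e_1 + e_2 + e_3 = 0$. From the conversion table, each step of the generalized oscillating tableau adds $\pm e_j$ to the running sum exactly in parallel to how we change $\lambda^i_j$ by $\pm 1$. Therefore if $\pi = (\pi_0, \ldots, \pi_k)$ is the associated lattice path, a straightforward induction gives
\[
\pi_i \;=\; \lambda^i_1 e_1 + \lambda^i_2 e_2 + \lambda^i_3 e_3 \;=\; (\lambda^i_1 - \lambda^i_3)\, e_1 + (\lambda^i_2 - \lambda^i_3)\, e_2,
\]
where the second equality uses $e_3 = -e_1 - e_2$ and the linear independence of $e_1, e_2$.

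Next I would observe that the dominant-chamber condition is built into $T$ automatically. Indeed, the dominant chamber is the cone of non-negative integer combinations of $e_1$ and $-e_3 = e_1 + e_2$, so $\pi_i$ lies in the dominant chamber iff $\lambda^i_1 - \lambda^i_3 \geq 0$ and $\lambda^i_2 - \lambda^i_3 \geq 0$ with $\lambda^i_1 - \lambda^i_2 \geq 0$, i.e.\ iff $\lambda^i_1 \geq \lambda^i_2 \geq \lambda^i_3$. This is precisely the condition that each $\lambda^i$ be a generalized partition with three parts, which holds by definition of $T \in \text{GOT}(k,3)$. Hence the path $\pi$ is always confined to the dominant chamber, and the only non-automatic requirement for dominance is that $\pi_k = 0$.

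Finally, by linear independence of $e_1, e_2$, the identity $\pi_k = (\lambda^k_1 - \lambda^k_3)\, e_1 + (\lambda^k_2 - \lambda^k_3)\, e_2 = 0$ holds iff $\lambda^k_1 = \lambda^k_2 = \lambda^k_3$, i.e.\ iff $\lambda^k = (m,m,m)$ for some $m \in \mathbb{Z}$. Combining, the signature-and-state string read off from $T$ is dominant iff $\lambda^k = (m,m,m)$, and by the Khovanov--Kuperberg bijection such strings correspond bijectively to webs with a chosen leftmost vertex. There is no substantive obstacle here; the whole argument really just unpacks the encoding in the conversion table against the relation $e_1 + e_2 + e_3 = 0$, and the only point requiring slight care is confirming that the generalized-partition inequalities match the cone inequalities cutting out the dominant chamber.
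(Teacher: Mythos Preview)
Your proof is correct and follows essentially the same approach as the paper: both arguments translate the dominant-path condition into the statement that the path ends at the origin, which is equivalent to $\lambda^k=(m,m,m)$. The paper's proof is a one-line appeal to the discussion immediately preceding the proposition (where the equivalence between the dominant-chamber condition and the generalized-partition inequalities is already noted), whereas you have written out the coordinate computation explicitly using $e_1+e_2+e_3=0$; this is simply a more detailed version of the same argument.
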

\begin{proof}
The condition that $\lambda^0=\emptyset$ and $\lambda^k=(m,m,m)$ is equivalent to the condition that the dominant path starts and ends at the origin.
\end{proof}

\begin{lemma}
Let $T$ be a generalized oscillating tableau coming from web $D$. Then $p(T)$ corresponds to a web.
\end{lemma}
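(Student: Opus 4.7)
The plan is to reduce the claim, via Proposition~\ref{prop:rowssamesize}, to showing that the last partition is preserved by promotion: if $T = (\lambda^0, \dots, \lambda^k)$ and $p(T) = (\mu^0, \dots, \mu^k)$, then $\mu^k = \lambda^k$. Once this is established, the hypothesis that $T$ comes from a web yields $\lambda^k = (m,m,m)$ by Proposition~\ref{prop:rowssamesize}, so $\mu^k = (m,m,m)$, and the converse direction of Proposition~\ref{prop:rowssamesize} gives that $p(T)$ corresponds to a web. Thus the entire problem collapses to a shape-preservation statement about generalized oscillating promotion that has nothing to do with the ``web'' hypothesis per se.

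To prove $\mu^k = \lambda^k$, I would invoke the growth-rule description of generalized oscillating promotion, which is the direct analogue of the classical growth rules recalled in Section~\ref{sec:tabandprom}. In the classical setup, $\mu^k = \lambda^k$ is built in as a boundary condition of the growth diagram, while the local rules propagate $\mu^s$ consistently from left to right; the same boundary condition is retained in the generalized rules, so $\mu^k = \lambda^k$ holds identically on $\text{GOT}(k,n)$, not only for tableaux arising from webs. Equivalently, from the tableau-algorithm description illustrated in Figure~\ref{fig:genoscprom}, the travelling symbol $\bullet$ (or $\bullet'$) is replaced by the new entry $k+1$ (or $(k+1)'$) at precisely the exit position that restores the outer shape $\lambda^k$; every intermediate jeu-de-taquin-style swap merely relabels which step is responsible for which box and does not alter the final partition.

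The main obstacle relative to the classical setting is that generalized jeu-de-taquin swaps may transfer a box between the positive and negative halves of the tableau (in contrast to classical promotion where only additions occur), so verifying $\mu^k = \lambda^k$ requires careful case analysis over all local configurations of additions and deletions permitted by the growth rules. Once this case analysis is carried out---essentially as a property designed into the rules---the lemma follows by two applications of Proposition~\ref{prop:rowssamesize}.
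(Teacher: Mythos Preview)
Your approach is correct and matches the paper's proof exactly: reduce to $\mu^k=\lambda^k$ via Proposition~\ref{prop:rowssamesize}, and observe that this equality is built into the growth rules as the boundary condition (OP3). Your final paragraph about a ``careful case analysis over all local configurations'' is unnecessary, though---since (OP3) \emph{defines} $\mu^k=\lambda^k$, there is nothing to verify; the only well-definedness check is that $\mu^{k-1}$ differs from $\lambda^k$ by a single box, and that is the content of the lemma immediately preceding this one in the paper.
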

\begin{proof}
The result follows from Proposition~\ref{prop:rowssamesize} and the fact that $\lambda^k=\mu^k$ in generalized oscillating promotion.
\end{proof}

\subsection{Generalized Oscillating Promotion Growth rules}\label{sec:genoscprogrowth}
We first define a promotion action $p:\text{GOT}(k,n)\to\text{GOT}(k,n)$ using growth rules. We call this action \textit{generalized oscillating promotion}. It is important to note that in contrast to promotion on standard Young tableaux, $p(T)$ may not have the same shape as generalized oscillating tableau $T$. 

Let $T=(\lambda^0, \lambda^1,\ldots,\lambda^k)\in\text{GOT}(k,n)$. We inductively build a new sequence of generalized partition shapes $(\mu^0=\emptyset, \mu^1,\ldots,\mu^k)$ using a set of growth rules as in Section~\ref{sec:tabandprom}. The sequence we create gives $p(T)$.

The idea of the growth rules is this: Suppose we perform action 1 to obtain $\lambda^s$ from $\mu^{s-1}$ and action 2 to obtain $\lambda^{s+1}$ from $\lambda^s$. Then if performing action 2 on $\mu^{s-1}$ gives a generalized partition with $n$ parts, $\mu^s$ is obtained from $\mu^{s-1}$ by performing action 2. If performing action 2 on $\mu^{s-1}$ does not give a generalized partition with $n$ parts, we then either obtain $\mu^s$ from $\mu^{s-1}$ by performing action 1 or we, in two cases, must otherwise modify. See Figure~\ref{fig:growthschematic} for a schematic of this idea. The precise rules are as follows.

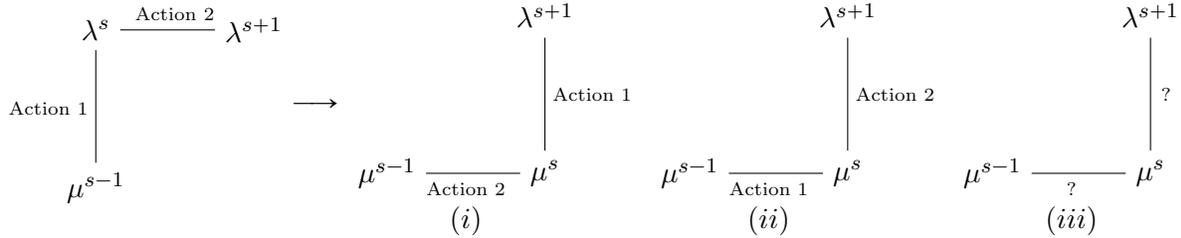
\begin{figure}[h!]
\begin{tikzpicture}[scale=2.1]
\node (1) at (0,0) {$\lambda^s$};
\node (2) at (1,0) {$\lambda^{s+1}$};
\node (3) at (0,-1) {$\mu^{s-1}$};
\node (4) at (.5,.1) {\tiny{Action 2}};
\node (5) at (-.3,-.5) {\tiny{Action 1}};
\node (6) at (.5,-1.3) {};
\draw (1)--(2) (1)--(3);
\end{tikzpicture}\raisebox{.7in}{$\longrightarrow$}
\begin{tikzpicture}[scale=2.1]
\node (2) at (1,0) {$\lambda^{s+1}$};
\node (3) at (0,-1) {$\mu^{s-1}$};
\node (4) at (1,-1) {$\mu^s$};
\node (5) at (.5,-1.1) {\tiny{Action 2}};
\node (6) at (1.3, -.5) {\tiny{Action 1}};
\node (7) at (.5,-1.3) {$(i)$};
\draw (2)--(4) (3)--(4);
\end{tikzpicture}
\begin{tikzpicture}[scale=2.1]
\node (2) at (1,0) {$\lambda^{s+1}$};
\node (3) at (0,-1) {$\mu^{s-1}$};
\node (4) at (1,-1) {$\mu^s$};
\node (5) at (.5,-1.1) {\tiny{Action 1}};
\node (7) at (1.3, -.5) {\tiny{Action 2}};
\node (6) at (.5,-1.3) {$(ii)$};
\draw (2)--(4) (3)--(4);
\end{tikzpicture}
\begin{tikzpicture}[scale=2.1]
\node (2) at (1,0) {$\lambda^{s+1}$};
\node (3) at (0,-1) {$\mu^{s-1}$};
\node (4) at (1,-1) {$\mu^s$};
\node (5) at (.5,-1.1) {\tiny{?}};
\node (7) at (1.1, -.5) {\tiny{?}};
\node (6) at (.5,-1.3) {$(iii)$};
\draw (2)--(4) (3)--(4);
\end{tikzpicture}
\caption{The idea of the generalized oscillating promotion growth rules. If the result of $(i)$ is a generalized partition with the correct number of parts, we choose $(i)$. Otherwise, we choose $(ii)$ in (OP1b) and (OP2b). Rules (OP1d) and (OP2d) correspond to $(iii)$, where we modify depending on $\mu^{s-1}$.}
\label{fig:growthschematic}
\end{figure}

\begin{definition}(Generalized oscillating promotion growth rules)
\begin{enumerate}
\item[(OP1)] Suppose $\lambda^s$ is obtained from $\mu^{s-1}$ by adding a box in row $i$.
\begin{enumerate}
\item If $\lambda^{s+1}$ is obtained from $\lambda^s$ by adding a box in row $j$ and the result of adding a box to row $j$ of $\mu^{s-1}$ is a generalized partition with $n$ parts, then $\mu^s$ is obtained from $\mu^{s-1}$ by adding a box in row $j$.
\item If $\lambda^{s+1}$ is obtained from $\lambda^s$ by adding a box in row $j\neq i$ and the result of adding a box to row $j$ of $\mu^{s-1}$ is not a generalized partition with $n$ parts, then $\mu^s$ is obtained from $\mu^{s-1}$ by adding a box in row $i$.
\item If $\lambda^{s+1}$ is obtained from $\lambda^s$ by deleting a box in row $j$ and deleting a box in row $j$ of $\mu^{s-1}$ is a generalized partition with $n$ parts, then $\mu^s$ is the result of deleting a box in row $j$ of $\mu^{s-1}$.
\item If $\lambda^{s+1}$ is obtained from $\lambda^s$ by deleting a box in row $i$ and deleting a box in row $i$ of $\mu^{s-1}$ is not a generalized partition with $n$ parts, then $\mu^s$ is the result of deleting a box of $\mu^{s-1}$ in row $i+t$ for $t>0$ as small as possible.
\end{enumerate}

\item[(OP2)] Suppose $\lambda^s$ is obtained from $\mu^{s-1}$ by deleting a box in row $i$.
\begin{enumerate}
\item If $\lambda^{s+1}$ is obtained from $\lambda^s$ by deleting a box in row $j$ and the result of deleting a box in row $j$ of $\mu^{s-1}$ is a generalized partition with $n$ parts, then $\mu^s$ is obtained from $\mu^{s-1}$ by deleting a box in row $j$.
\item If $\lambda^{s+1}$ is obtained from $\lambda^s$ by deleting a box in row $j\neq i$ and the result of deleting a box to row $j$ of $\mu^{s-1}$ is not a generalized partition with $n$ parts, then $\mu^s$ is obtained from $\mu^{s-1}$ by deleting a box in row $i$.
\item If $\lambda^{s+1}$ is obtained from $\lambda^s$ by adding a box in row $j$ and adding a box in row $j$ of $\mu^{s-1}$ is a generalized partition with $n$ parts, then $\mu^s$ is the result of adding a box in row $j$ of $\mu^{s-1}$.
\item If $\lambda^{s+1}$ is obtained from $\lambda^s$ by adding a box in row $i$ and adding a box in row $i$ of $\mu^{s-1}$ is not a generalized partition with $n$ parts, then $\mu^s$ is the result of adding a box in row $i-t$ of $\mu^{s-1}$, where $t>0$ is as small as possible.
\end{enumerate}
\item[(OP3)] $\mu^k=\lambda^k$.
\end{enumerate}
\end{definition}

\begin{remark}
Note that rules (OP1b) and (OP2b) need only be stated for $j\neq i$ because if $j=i$, the result is always a generalized partition with $n$ parts. Similarly, (OP1d) and (OP2d) need only be stated for $j=i$ because the situations described in (OP1c) and (OP2c) can fail to be generalized partitions with $n$ parts only when $j=i$.
\end{remark}

\begin{example}\label{ex:gopgrowth} Below is the growth diagram for promotion of the generalized oscillating tableau \[T=(\emptyset,(1,0,0), (2,0,0), (2,0,-1),(2,1,-1), (1,1,-1), (1,0,-1), (1,0,0), \emptyset).\] To construct $p(T)$, we use rules (OP1a) with $i=j=1$, (OP1c) with $i=1$ and $j=3$, (OP1a) with $i=1$ and $j=2$, (OP1d) with $i=1$ and $t=1$, (OP1c) with $i=j=2$, (OP1b) with $i=2$ and $j=3$, (OP1c) with $i=3$ and $j=1$, and finally (OP3).
\begin{center}
\ytableausetup{boxsize=.1in}
\begin{tikzpicture}[scale=1.6]
\node (0) at (0,0) {$\emptyset$};
\node (1) at (1,0) {\begin{ytableau} $ $ \end{ytableau}};
\node (2) at (2,0) {\begin{ytableau} $ $ & \end{ytableau}};
\node (3) at (3,0) {\begin{ytableau}\none & & \\ \none \\ *(red)  \end{ytableau}};
\node (4) at (4,0) {\begin{ytableau}\none & & \\ \none &  \\ *(red)  \end{ytableau}};
\node (5) at (5,0) {\begin{ytableau} \none & \\ \none & \\ *(red) \end{ytableau}};
\node (6) at (6,0) {\begin{ytableau} \none & \\ \none \\ *(red) \end{ytableau}};
\node (7) at (7,0) {\begin{ytableau} $ $ \end{ytableau}};
\node (8) at (8,0) {$\emptyset$};
\node (0') at (1,-1) {$\emptyset$};
\node (1') at (2,-1) {\begin{ytableau} $ $ \end{ytableau}};
\node (2') at (3,-1) {\begin{ytableau} \none &  \\ \none \\ *(red) \end{ytableau}};
\node (3') at (4,-1) {\begin{ytableau}\none & \\ \none &  \\ *(red)  \end{ytableau}};
\node (4') at (5,-1) {\begin{ytableau}\none &  \\ \none   \\ *(red)  \end{ytableau}};
\node (5') at (6,-1) {\begin{ytableau} \none & \\ *(red) \\ *(red) \end{ytableau}};
\node (6') at (7,-1) {\begin{ytableau} \none & \\ \none \\ *(red) \end{ytableau}};
\node (7') at (8,-1) {\begin{ytableau} \none & \none & *(red) \end{ytableau}};
\node (8') at (9,-1) {$\emptyset$};
\draw (0)--(1)--(2)--(3)--(4)--(5)--(6)--(7)--(8);
\draw (0')--(1')--(2')--(3')--(4')--(5')--(6')--(7')--(8');
\draw (1)--(0') (2)--(1') (3)--(2') (4)--(3') (5)--(4') (6)--(5') (7)--(6') (8)--(7');
\end{tikzpicture}
\end{center}
\end{example}

\begin{lemma}
The shape $\mu^s$ produced from generalized partitions with $n$ parts $\lambda^s$, $\lambda^{s+1}$, and $\mu^{s-1}$ is a generalized partition with $n$ parts.
\end{lemma}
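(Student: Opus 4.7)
The plan is to proceed by case analysis over the eight growth rules (OP1a)--(OP2d), checking in each case that the resulting sequence is weakly decreasing and has $n$ parts.

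Cases (OP1a), (OP1c), (OP2a), and (OP2c) are immediate from the stated hypothesis of each rule, which explicitly requires that the prescribed modification of $\mu^{s-1}$ already be a generalized partition with $n$ parts. Cases (OP1b) and (OP2b) are essentially as easy: in each of these, $\mu^s$ is obtained from $\mu^{s-1}$ by the same move that produced $\lambda^s$ from $\mu^{s-1}$, so $\mu^s = \lambda^s$, which is a generalized partition with $n$ parts by the assumption $T \in \text{GOT}(k,n)$.

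The substantive cases are (OP1d) and (OP2d). I will describe (OP1d); the case (OP2d) is completely symmetric under the involution that negates each part and reverses the order of a generalized partition with $n$ parts. In (OP1d), the failure of deleting a box from row $i$ of $\mu^{s-1}$ to produce a generalized partition with $n$ parts forces a structural constraint on $\mu^{s-1}$: if $i=n$, the last row can always be decremented since parts are allowed to be arbitrary integers, so in fact $i<n$ and $\mu^{s-1}_i = \mu^{s-1}_{i+1}$. This little observation is the point that needs to be justified most carefully.

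Given $i<n$, the rule defines $\mu^s$ by deleting a box from row $i+t$ of $\mu^{s-1}$, where $t>0$ is minimal subject to the result being a generalized partition with $n$ parts. Such a minimum exists because $t = n - i \geq 1$ always works (again, the last row can always be decremented), so the search range is nonempty. For the chosen $t$, the only new adjacencies to check are between rows $i+t-1$ and $i+t$, and between rows $i+t$ and $i+t+1$. The first is automatic from $\mu^{s-1}_{i+t-1} \geq \mu^{s-1}_{i+t} \geq \mu^{s-1}_{i+t}-1$, and the second holds by the defining property of $t$ (or vacuously if $i+t = n$). The number of parts is unchanged since we only modified a single entry.

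The main obstacle, such as it is, is the structural observation used in (OP1d) and (OP2d): the failure hypothesis forces $i<n$ (respectively $i>1$) and $\mu^{s-1}_i = \mu^{s-1}_{i+1}$ (respectively $\mu^{s-1}_{i-1} = \mu^{s-1}_i$). Once that is in hand, the existence of the minimizing $t$ and the verification that the resulting shape is a generalized partition with $n$ parts are routine.
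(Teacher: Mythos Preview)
Your proof is correct and follows essentially the same approach as the paper's: the paper's own argument is a terse two-line sketch that dismisses all cases except (OP1d) and (OP2d) as clear, and handles (OP1d) by observing that one can always delete a box from the last row (with (OP2d) parallel). You have simply filled in the details the paper omits, including the observation that the failure hypothesis in (OP1d) forces $i<n$, which is exactly what makes ``delete from row $n$'' available as a witness for the existence of $t$.
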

\begin{proof}
The result is clear for all rules except (OP1d) and (OP2d). 

For rule (OP1d), we know there is some row $i-t$ from which we can delete a box and obtain a generalized partition because we can always delete a box from the last row. There is a parallel argument for (OP2d).

\end{proof}

The following lemma follows immediately from the promotion growth rules.
\begin{lemma}
The shape $\mu^s$ produced from generalized partitions with $n$ parts $\lambda^s$, $\lambda^{s+1}$, and $\mu^{s-1}$ differs by one box from $\lambda^{s+1}$.
\end{lemma}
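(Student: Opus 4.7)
The plan is to verify the claim by a direct case analysis through the eight subcases (OP1a)--(OP1d) and (OP2a)--(OP2d) of the growth rules. In each subcase the relationship between $\mu^{s-1}$ and $\lambda^s$ and between $\lambda^s$ and $\lambda^{s+1}$ is prescribed, and the rule dictates how $\mu^s$ is obtained from $\mu^{s-1}$. What needs to be checked is that the resulting $\mu^s$ differs from $\lambda^{s+1}$ by exactly one addition or deletion of a box.

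In the ``easy'' cases (OP1a), (OP1c), (OP2a), (OP2c), the four shapes $\mu^{s-1}, \lambda^s, \lambda^{s+1}, \mu^s$ form a commutative square: the two moves on the top edge ($\mu^{s-1}\to\lambda^s\to\lambda^{s+1}$) are the same moves, applied in the opposite order, as the two moves on the sides ($\mu^{s-1}\to\mu^s$ then $\mu^s\to\lambda^{s+1}$). In particular, one reads off directly that $\lambda^{s+1}$ is obtained from $\mu^s$ by the single box move that took $\mu^{s-1}$ to $\lambda^s$. In the ``swap'' cases (OP1b) and (OP2b) one has $\mu^s=\lambda^s$, so $\lambda^{s+1}$ differs from $\mu^s$ by precisely the box move prescribed by the step $\lambda^s\to\lambda^{s+1}$, again a single box.

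The only cases requiring any thought are the ``modified'' cases (OP1d) and (OP2d). Here $\lambda^{s+1}=\mu^{s-1}$, because $\lambda^s\to\lambda^{s+1}$ undoes the box move $\mu^{s-1}\to\lambda^s$. Since $\mu^s$ is obtained from $\mu^{s-1}$ by deleting a box in row $i+t$ (resp.\ adding a box in row $i-t$), we get that $\lambda^{s+1}=\mu^{s-1}$ differs from $\mu^s$ by precisely the one box that was moved. The preceding lemma guarantees that such a $\mu^s$ exists as a generalized partition with $n$ parts, which is what allows us to conclude that the single-box difference is legitimate.

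Each of the eight cases is thus a one-line check. The main (very minor) obstacle is simply keeping the bookkeeping of which box is added or deleted straight; no structural argument beyond the case analysis is needed, and in particular one does not need any global property of the sequence $(\mu^0,\ldots,\mu^s)$ beyond the inductive hypothesis that $\mu^{s-1}$ is a generalized partition with $n$ parts.
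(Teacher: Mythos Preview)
Your case analysis is correct and is exactly what the paper has in mind: the paper's proof consists of the single sentence ``follows immediately from the promotion growth rules,'' and your eight-case check is precisely the unpacking of that sentence. In particular, your observations that $\mu^s=\lambda^s$ in cases (OP1b)/(OP2b) and that $\lambda^{s+1}=\mu^{s-1}$ in cases (OP1d)/(OP2d) are the key identifications that make each verification a one-liner.
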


\begin{lemma}\label{lem:labelsincrease}
\begin{itemize}
\item[$(a)$] If $\lambda^1=(1,0,\ldots,0)$, then $\lambda^s$ is always obtained from $\mu^{s-1}$ by adding a box in some row $i_s$, and
$1=i_1\leq i_2\leq\cdots\leq i_{k}$.

\item[$(b)$] If $\lambda^1=(0,\ldots,0,-1)$, then $\lambda^s$ is always obtained from $\mu^{s-1}$ by deleting a box in some row $i_s$, and
$n=i_1\geq i_2\geq\cdots\geq i_k$.
\end{itemize}
\end{lemma}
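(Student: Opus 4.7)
The plan is to prove both parts by induction on $s$, using the growth rules (OP1) and (OP2) to analyze how each vertical edge in the growth diagram evolves. I will describe part (a) in detail; part (b) is entirely symmetric with (OP1) replaced by (OP2).

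The base case $s=1$ is immediate: since $\mu^0 = \emptyset$ and $\lambda^1 = (1,0,\ldots,0)$, we have $\lambda^1$ obtained from $\mu^0$ by adding a box in row $i_1 = 1$. For the inductive step, assume $\lambda^s$ is obtained from $\mu^{s-1}$ by adding a box in row $i = i_s$. This means the local picture at step $s$ falls under (OP1), and I will check each subcase to show that $\lambda^{s+1}$ is obtained from $\mu^s$ by adding a box in some row $i_{s+1} \geq i$. In case (OP1a), if $\lambda^{s+1}$ is obtained from $\lambda^s$ by adding a box in row $j$ and $\mu^s = \mu^{s-1} + \text{(box in row } j)$, then writing $\lambda^{s+1} = \mu^{s-1} + \text{(row } i) + \text{(row } j)$ gives $\lambda^{s+1} = \mu^s + \text{(box in row } i)$, so $i_{s+1} = i$. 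In case (OP1c), an identical subtraction gives $\lambda^{s+1} = \mu^s + \text{(box in row } i)$, so again $i_{s+1} = i$.

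The two interesting cases are (OP1b) and (OP1d). In (OP1b) we have $\mu^s = \lambda^s$, so $\lambda^{s+1} = \mu^s + \text{(box in row } j)$ and $i_{s+1} = j$. The key lemma to establish is that if $j \neq i$ and adding to row $j$ of $\lambda^s$ is legal while adding to row $j$ of $\mu^{s-1}$ is not, then $j = i+1$. Since $\mu^{s-1}$ and $\lambda^s$ agree outside row $i$, and since non-increasingness can only be destroyed by increasing row $j$ when row $j$ equals row $j-1$, the failure for $\mu^{s-1}$ but success for $\lambda^s$ forces row $j-1$ to be the row that changed between them, i.e. $j-1 = i$. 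Thus $i_{s+1} = i+1 > i$. In (OP1d), $\lambda^{s+1} = \lambda^s - \text{(box in row } i) = \mu^{s-1}$, and $\mu^s = \mu^{s-1} - \text{(box in row } i+t)$ for the minimal valid $t > 0$, so $\lambda^{s+1} = \mu^s + \text{(box in row } i+t)$ and $i_{s+1} = i + t > i$. In all four cases, the edge from $\mu^s$ to $\lambda^{s+1}$ is an addition and the row weakly increases, completing the induction.

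For part (b), the same argument runs with the roles of addition and deletion swapped and inequalities reversed: the base case uses $\lambda^1 = (0,\ldots,0,-1)$ obtained from $\mu^0$ by deleting a box in row $n$, so $i_1 = n$. The subcases (OP2a), (OP2c) preserve the row exactly, while (OP2b) forces $j = i-1$ (by the mirror image of the argument above, applied to the failure of deletion at row $j$ versus success in $\lambda^s$), and (OP2d) produces $i_{s+1} = i - t < i$ for the smallest valid $t > 0$. The main obstacle in the whole argument is the combinatorial verification in (OP1b) and (OP2b) that the \emph{only} way the validity of an addition (resp. deletion) can differ between $\mu^{s-1}$ and $\lambda^s$ is in the specific row $i \pm 1$; once this is nailed down, the monotonicity of the $i_s$ follows mechanically from tracking where the added or deleted box ends up.
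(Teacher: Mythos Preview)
Your proof is correct and follows essentially the same approach as the paper: both argue that the row index $i_s$ remains unchanged under rules (OP1a) and (OP1c) and strictly increases under (OP1b) and (OP1d), with the key point being that in (OP1b) the failure of adding to row $j$ in $\mu^{s-1}$ but success in $\lambda^s$ forces $j-1=i$. Your version is organized as a cleaner explicit induction on $s$ and spells out the verification that $j=i+1$ in (OP1b) more carefully than the paper, which asserts it at the first jump (with $i=1$, $j=2$) and then waves at ``continuing in this manner.''
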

\begin{proof}
We know that $\lambda^1=\lambda^1/\mu^0$ is either $(1,0,\ldots,0)$ or $(0,\ldots,0,-1)$. Assume $\lambda^1=(1,0,\ldots,0)$. Then $\lambda^r/\mu^{r-1}$ is one block in the first row until we first apply one of rule (OP1b) or (OP1d). If we then apply either rule (OP1b) or (OP1d) to obtain $\mu^s$, we know that $\mu^{s-1}_1=\mu^{s-1}_2$. If we use (OP1b), we must have $i=1$ and $j=2$, and so $\lambda^{s+1}$ differs from $\mu^s$ by one box in the second row. If we use (OP1d), $\lambda^{s+1}$ differs from $\mu^s$ by one block in some row $(1+t)$ with $t>0$. In either case, from this point forward, $\lambda^r/\mu^{r-1}$ will be one block in some row $1+t$ with $t>0$ until we next apply one of rule (OP1b) or (OP1d). After applying this rule, $\lambda^r/\mu^{r-1}$ will be a box in some row $1+t+s$ for $s>0$. Continuing in this manner, gives the result. The argument for $\lambda^1=(0,\ldots,0,-1)$ is similar.
\end{proof}

\begin{lemma}\label{lem:jumpgivesequality}
\begin{itemize}
\item[$(a)$] Suppose $\lambda^1=(1,0,\ldots,0)$. Suppose $\lambda^s$ is obtained from $\mu^{s-1}$ by adding a box in row $i$, growth rule (OP1b) or (OP1d) is applied to $\mu^{s-1}$ to obtain $\mu^s$, and the previous application of growth rule (OP1b) or (OP1d) was at $\mu^{r-1}$. Then $\lambda^{s+1}$ is the first generalized partition of $T$ with $s+1>r+1$ with row $i$ the same size as row $i+1$. If $\mu^{s-1}$ is the first application of (OP1b) or (OP1d), then $\lambda^{s+1}$ is the first nonempty generalized partition of $T$ with $\lambda^{s+1}_1=\lambda^{s+1}_2$.

\item[$(b)$] If $\lambda^1=(0,\ldots,0,-1)$. Suppose $\lambda^s$ is obtained from $\mu^{s-1}$ by deleting a box in row $i$, growth rule (OP2b) or (OP2d) is applied to $\mu^{s-1}$ to obtain $\mu^s$, and the previous application of (OP2b) or (OP2d) was at $\mu^{r-1}$. Then $\lambda^{s+1}$ is the first partition of $T$ with $s+1>r+1$ with row $i$ the same size as row $i-1$. If $\mu^{s-1}$ is the first application of (OP2b) or (OP2d), then $\lambda^{s+1}$ is the first nonempty generalized partition of $T$ with $\lambda^{s+1}_n=\lambda^{s+1}_{n-1}$.
\end{itemize}
\end{lemma}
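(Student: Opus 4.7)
The plan is to treat both the base (first firing) and inductive (between two consecutive firings) cases of part~(a) in a single argument, and to deduce part~(b) by the symmetric argument obtained by swapping row $1$ for row $n$, addition for deletion, and the rules (OP1$\cdot$) for (OP2$\cdot$). Let $s$ be the firing step in the statement and let $r$ be the step of the immediately preceding firing, with the convention $r=0$ if $s$ is the first firing. By Lemma~\ref{lem:labelsincrease}(a), the indices $i_{r+1}, i_{r+2},\ldots, i_s$ all equal a single row index, which is exactly the $i$ appearing in the statement. The engine of the proof is the invariant
\begin{equation*}
\mu^{p-1} \;=\; \lambda^p - (\text{row } i \text{ box}) \qquad \text{for every } p \text{ with } r<p\leq s,
\end{equation*}
i.e.\ $\mu^{p-1}_i=\lambda^p_i-1$ and $\mu^{p-1}_a=\lambda^p_a$ for all $a\neq i$. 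The base case $p=r+1$ holds either from $\mu^0=\emptyset$ and $\lambda^1=(1,0,\ldots,0)$ (when $r=0$) or by a direct computation of $\mu^r$ using whichever of (OP1b) or (OP1d) fired at step $r$. The inductive step is immediate because only (OP1a) and (OP1c) act on the intermediate steps and each of these updates $\mu^{p-1}$ and $\lambda^p$ by the same box in the same row.

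Using the invariant, I analyze the triggering condition at step $s$. For (OP1b), the failure condition is $\mu^{s-1}_{j-1}=\mu^{s-1}_j$ with $j\neq i$. If both $j-1$ and $j$ differ from $i$, then the invariant gives $\mu^{s-1}_{j-1}=\lambda^s_{j-1}$ and $\mu^{s-1}_j=\lambda^s_j$; but validity of $\lambda^{s+1}=\lambda^s+(\text{row }j\text{ box})$ forces $\lambda^s_{j-1}>\lambda^s_j$, contradicting the equality. The only remaining option is $j=i+1$, in which case $\mu^{s-1}_i=\mu^{s-1}_{i+1}$ translates to $\lambda^s_i=\lambda^s_{i+1}+1$, and then $\lambda^{s+1}_{i+1}=\lambda^s_{i+1}+1=\lambda^s_i=\lambda^{s+1}_i$. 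For (OP1d), the failure of $\mu^{s-1}-(\text{row }i\text{ box})$ to be a generalized partition gives $\mu^{s-1}_i=\mu^{s-1}_{i+1}$, hence $\lambda^s_i=\lambda^s_{i+1}+1$; then $\lambda^{s+1}_i=\lambda^s_i-1=\lambda^s_{i+1}=\lambda^{s+1}_{i+1}$. Either way, $\lambda^{s+1}_i=\lambda^{s+1}_{i+1}$.

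For minimality, I run the invariant in the opposite direction. For every $p$ with $r<p\leq s$,
\begin{equation*}
\lambda^p_i - \lambda^p_{i+1} \;=\; (\mu^{p-1}_i+1) - \mu^{p-1}_{i+1} \;\geq\; 1,
\end{equation*}
so row $i$ of $\lambda^p$ is strictly larger than row $i+1$ throughout the intermediate range. Hence $\lambda^{s+1}$ is the first partition of $T$ whose index exceeds $r+1$ for which $\lambda_i=\lambda_{i+1}$; specializing to $r=0$ and $i=1$ yields the "first nonempty" statement, since then $\lambda^0$ is the only excluded partition. Part~(b) follows by the dual argument: reflect the row indexing $k\leftrightarrow n+1-k$, interchange adding and deleting, use (OP2$\cdot$) in place of (OP1$\cdot$), and replace row $i+1$ by row $i-1$. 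The main obstacle is the case analysis in the middle paragraph, specifically ruling out (OP1b) firings with $j\neq i+1$: the triggering equality $\mu^{s-1}_{j-1}=\mu^{s-1}_j$ must be reconciled with the strict partition inequalities inherited by $\lambda^s$ from the validity of $\lambda^{s+1}$, and it is exactly this clash that singles out $j=i+1$ as the unique possible triggering row and explains why the rule fires precisely when the next row equalizes.
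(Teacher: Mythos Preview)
Your proof is correct and follows essentially the same route as the paper's: both arguments use the constancy of the row index $i_p$ between consecutive firings of (OP1b)/(OP1d) (which is implicit in the proof of Lemma~\ref{lem:labelsincrease}), then read off $\lambda^{s+1}_i=\lambda^{s+1}_{i+1}$ from the failure condition that triggered the firing, and obtain minimality from $\lambda^p_i>\lambda^p_{i+1}$ on the intermediate range. Your write-up is in fact more careful than the paper's on one point: you explicitly rule out (OP1b) firing with $j\neq i+1$ by contrasting the equality $\mu^{s-1}_{j-1}=\mu^{s-1}_j$ with the strict inequality $\lambda^s_{j-1}>\lambda^s_j$ forced by the validity of $\lambda^{s+1}$, whereas the paper simply asserts the analogous fact (``we must have $i=1$ and $j=2$'') in the proof of Lemma~\ref{lem:labelsincrease} without argument.
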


\begin{proof}
Suppose that $\lambda^1=(1,0,\ldots,0)$, and that $1=i_1=i_2=\cdots=i_\ell<i_{\ell+1}$. In other words, we applied rule (OP1b) or rule (OP1d) for the first time at $\mu^\ell$. Since we know that $\mu^\ell_1=\mu^\ell_2$, we also know from examining (OP1b) and (OP1d) that $\lambda^{\ell+2}_1=\lambda^{\ell+2}_2$. We claim that $\lambda^{\ell+2}$ is the first generalized partition of $T$ with this property. This follows from the fact that $i_{\ell+1}$ is the first time $i_r>1$. When $i_r=1$, $\lambda^{r+1}$ is obtained by adding 1 to the first row of $\mu^r$, and thus $\lambda^{r+1}_1\neq \lambda^{r+1}_2$.

We can similarly argue that the next time rule (OP1b) or (OP1d) is applied to obtain $\mu^{s-1}$ from $\mu^{s-2}$ corresponds to the first $\lambda^{s}$ with $s>\ell+2$ and $\lambda^{s}_{\ell+1}=\lambda^{s}_{\ell+2}$ because until we reach this point, $\lambda^r$ is obtained from $\mu^{r-1}$ by adding one box in row $\ell+1$. Continuing in this manner gives the desired result.
\end{proof}

We next prove the converse.

\begin{lemma}\label{lem:equalitygivesjumps} Let $i_s$ denote the row in which a row was added or from which a box was deleted from $\mu^{s-1}$ to obtain $\lambda^s$.
\begin{enumerate}
\item Suppose $\lambda^1=(1,0,\ldots,0)$ and $i_s=t$ for some $s\in [1,k]$. Suppose $\lambda^r$ is the first generalized partition with $r>s$ and $\lambda^r_t=\lambda^r_{t+1}$. Then $i_s=\cdots=i_{r-1}=t$ and $i_{r}>t$.
\item Suppose $\lambda^1=(0,\ldots,0,-1)$ and $i_s=t$ for some $s\in [1,k]$. Suppose $\lambda^r$ is the first generalized partition with $r>s$ and $\lambda^r_t=\lambda^r_{t-1}$. Then $i_s=\cdots=i_{r-1}=t$ and $i_{r}<t$.
\end{enumerate}
\end{lemma}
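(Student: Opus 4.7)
The plan is to prove part (1) in detail; part (2) follows by the entirely symmetric argument, using Lemma~\ref{lem:labelsincrease}(b) and Lemma~\ref{lem:jumpgivesequality}(b) in place of their (a) counterparts, with additions replaced by deletions and $t+1$ replaced by $t-1$. I view the statement as the converse of Lemma~\ref{lem:jumpgivesequality}(a), so my strategy is to combine that converse direction with the monotonicity of the sequence $i_1\leq i_2\leq\cdots\leq i_k$ from Lemma~\ref{lem:labelsincrease}(a). A short inspection of the four sub-rules of (OP1) shows that $i_{s+1}>i_s$ occurs precisely when rule (OP1b) or (OP1d) is applied to produce $\mu^s$, while $i_{s+1}=i_s$ in the other two cases; I will refer to the former as a \emph{jump} at step $s$.

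Given $i_s=t$, I would define $r'$ as the smallest index $>s$ with $i_{r'}>t$, setting $r'=k+1$ if no such index exists. By monotonicity, $i_s=i_{s+1}=\cdots=i_{r'-1}=t$, so the lemma reduces to proving $r'=r$. I establish this via two inequalities. For $r'\leq r$: if $r'>r$, then $i_r=t$, and since case (1) always adds a box to $\mu$, one has $\lambda^r_t=\mu^{r-1}_t+1>\mu^{r-1}_{t+1}=\lambda^r_{t+1}$, contradicting $\lambda^r_t=\lambda^r_{t+1}$; this also rules out $r'=k+1$. For $r'\geq r$: if instead $r'<r$, the jump at step $r'-1$ has $i_{r'-1}=t$, and applying Lemma~\ref{lem:jumpgivesequality}(a) yields $\lambda^{r'}_t=\lambda^{r'}_{t+1}$; since $s<r'<r$, this contradicts the minimality of $r$.

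The main substantive point is the identification of jumps with applications of (OP1b)/(OP1d), together with the correct matching of indices when citing Lemma~\ref{lem:jumpgivesequality}(a). In particular, one must verify that the previous application of (OP1b)/(OP1d), if any, occurred at a step no later than $s-1$, which is automatic from $i_s=i_{s+1}=\cdots=i_{r'-1}=t$; this ensures that the ``first partition'' guarantee of Lemma~\ref{lem:jumpgivesequality}(a) indeed produces a partition of index in $(s,r')$. The secondary clause of that lemma, covering the case where $r'-1$ is the first jump, is handled identically, using that before any jump every $\lambda^j$ satisfies $\lambda^j_1>\lambda^j_2$ (the addition in row $1$ to the partition $\mu^{j-1}$ makes the inequality strict). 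Once this bookkeeping is in place, the two inequalities collapse to the one-line contradictions above, and I do not foresee any further obstacle.
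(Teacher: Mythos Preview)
Your proof is correct and follows essentially the same approach as the paper's: both use monotonicity from Lemma~\ref{lem:labelsincrease} together with the observation that $i_r=t$ would force $\mu^{r-1}$ (obtained from $\lambda^r$ by removing a box in row $t$) to violate the partition condition, and both invoke Lemma~\ref{lem:jumpgivesequality} to rule out any earlier jump. Your version is more explicit in packaging the argument via the auxiliary index $r'$ and in identifying jumps with applications of (OP1b)/(OP1d), but the underlying ideas coincide.
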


\begin{proof}
Assume $\lambda^1=(1,0,\ldots,0)$. . From Lemma~\ref{lem:labelsincrease}, we know that $i_r\geq t$. If $i_r=t$, then $\mu^{r-1}$ is obtained from $\lambda^r$ by removing a box in row $t$ and is thus not a generalized partition shape. We have that $i_s=\cdots=i_{r-1}$ from Lemma~\ref{lem:jumpgivesequality}. The argument for $\lambda^1=(0,\ldots,0,-1)$ is analogous.
\end{proof}

The following proposition follows from comparing the generalized oscillating promotion growth rules to the classical promotion growth rules.

\begin{proposition} Generalized oscillating promotion restricts to classical promotion when applied to a standard Young tableau $T$.
\end{proposition}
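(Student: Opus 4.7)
The plan is to show that when $T$ is a standard Young tableau, the generalized oscillating promotion procedure stays within the ``classical regime'' of honest partitions and applies growth rules that match the classical promotion growth rules verbatim.

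First I observe that if $T$ is a standard Young tableau of shape $\lambda$ with $r$ rows, then viewed as an element of $\text{GOT}(k,n)$ for any $n \geq r$, each $\lambda^s$ is an honest partition (all parts nonnegative and weakly decreasing) and $\lambda^s$ is obtained from $\lambda^{s-1}$ by \emph{adding} a single box. In particular, the sequence has no deletion steps and lives inside at most $r$ rows.

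Next I proceed by induction on $s$ to show that (a) $\mu^s$ is an honest partition with at most $r$ rows and (b) $\mu^s$ differs from $\mu^{s-1}$ by the \emph{addition} of a single box, so only rules (OP1a) and (OP1b) are ever invoked---rules (OP1c) and (OP1d) require $\lambda^{s+1}/\lambda^s$ to be a deletion, which never occurs. To maintain the inductive hypothesis I check in both applicable subrules that $\mu^s \subseteq \lambda^{s+1}$: in (OP1a) one has $\mu^s = \lambda^{s+1}$ minus its row-$i$ box, and in (OP1b) one has $\mu^s = \lambda^s = \lambda^{s+1}$ minus its row-$j$ box. Since $\lambda^{s+1} \subseteq \lambda^k = \lambda$ has at most $r$ rows and only nonnegative parts, so does $\mu^s$.

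The crucial comparison is then verbatim: under these conditions, generalized rules (OP1a) and (OP1b) reduce exactly to the classical promotion growth rules. The only potential subtlety is the discrepancy between ``is a generalized partition with $n$ parts'' and ``is a partition''; but for an honest partition $\mu^{s-1}$ with at most $n$ rows and a candidate row $j$ with $1 \leq j \leq n$, both conditions are equivalent to the single inequality $\mu^{s-1}_{j-1} > \mu^{s-1}_j$ (with the convention $\mu^{s-1}_0 = +\infty$). Since the row index $j$ in either rule is the row of $\lambda^{s+1}/\lambda^s$ and $\lambda^{s+1}$ has at most $r \leq n$ rows, $j$ never exceeds $n$, so no edge case at ``row $n+1$'' ever arises. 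Combined with the shared terminal rule $\mu^k = \lambda^k$, both procedures produce the same sequence starting from $\mu^0 = \emptyset$, so the generalized oscillating promotion of $T$ coincides with its classical promotion. The only (mild) obstacle is cleanly verifying $\mu^s \subseteq \lambda^{s+1}$ in each applicable subrule; this is a short direct check rather than a substantive difficulty, and it is precisely what keeps the induction inside the classical regime so that the two sets of growth rules never diverge.
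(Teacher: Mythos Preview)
Your proposal is correct and follows the same approach the paper indicates: the paper's proof is the single sentence ``follows from comparing the generalized oscillating promotion growth rules to the classical promotion growth rules,'' and your argument is a careful expansion of exactly that comparison, verifying inductively that only (OP1a) and (OP1b) are ever triggered and that these coincide with the classical rules.
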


In the next section, we give a tableau interpretation of generalized oscillating promotion. We can construct the tableau promotion rules from the growth rules using the correspondence discussed in Section~\ref{sec:tabandprom}. Namely, reading each sequence $(\mu^0,\ldots,\mu^{s-1},\lambda^s,\ldots,\lambda^k)$, filling the box of $\mu^{j}/\mu^{j-1}$ with $j+1$ if $\mu^{j-1}\subseteq\mu^{j}$ or the box of $\mu^{j-1}/\mu^j$ with $j+1'$ if $\mu^{j}\subseteq \mu^{j-1}$; filling the box $\lambda^j/\lambda^{j-1}$ with $j$ if $\lambda^{j-1}\subseteq\lambda^j$ or the box of $\lambda^{j-1}/\lambda^{j}$ with $j'$ if $\lambda^j\subseteq \lambda^{j-1}$; and filling $\lambda^s/\mu^{s-1}$ with $\bullet$ if $\mu^{s-1}\subseteq\lambda^s$ or $\mu^{s-1}/\lambda^s$ with $\bullet'$ if $\lambda^s\subseteq \mu^{s-1}$ will construct the tableau associated to each step in the promotion. For example, reading $(\mu^0,\mu^1,\mu^2,\mu^3,\lambda^4,\lambda^5,\lambda^6,\lambda^7,\lambda^8)$ in Example~\ref{ex:gopgrowth} tells us that we have the following tableau as a step in the generalized oscillating promotion.
\begin{center}
\ytableausetup{boxsize=.25in}
\begin{ytableau}
\none & 28' & \bullet 5' \\
\none & 46'\\
*(red) 3'7
\end{ytableau}
\end{center}

\subsection{Generalized oscillating promotion on tableaux}
We now describe the same generalized oscillating promotion action $p: \text{GOT}(k,n)\to\text{GOT}(k,n)$ in terms of tableaux. 

\begin{definition}[Generalized Oscillating Promotion]\label{def:goptab}
Given a generalized oscillating tableau $T=(\emptyset,\lambda^1,\ldots,\lambda^k)\in\text{GOT}(k,n)$, form $p(T)$ using the following steps.
\begin{enumerate}
\item If the entry 1 exists, delete it and replace it with $\bullet$. If instead the entry $1'$ exists, delete it and replace it with $\bullet'$. At each step in the promotion, we denote the box currently containing $\bullet$ or $\bullet'$ by $b$, the row containing $b$ by $r_b$, and the column containing $b$ by $c_b$. Let $c_b+1$ denote the column to the right of $c_b$ and $c_b-1$ denote the column to the left of $c_b$.
\item Perform jeu de taquin: For each $i\in\{2,\ldots,n\}$, perform the following swap with $i$ starting at 2 and consecutively increasing after each swap.
\begin{enumerate}
\item If $\bullet$ is unprimed:
\begin{enumerate}
\item If the box containing $i$ is directly below or directly right of $b$, switch the labels within these two boxes.
\item If $\bullet$ is in the same box at $i'$ consider $\lambda^{i}$.
\begin{itemize}

\item If $\lambda^{i}_{r_b}\neq\lambda^{i}_{r_b+1}$ or $\lambda^{i}_{r_b+1}$ does not exist, 
delete $\bullet$ and $i'$ from $b$ and add the subset $\{i', \bullet\}$ to the box directly to its left.
\item If $\lambda^{i}_{r_b}=\lambda^{i}_{r_b+1}$, delete $\bullet$ and $i'$ from box $b$ and add the subset $\{i', \bullet\}$ to the box in column $c_b-1$ and row $r$, where $r$ is the bottommost row of $\lambda^i$ of size $\lambda^i_{r_b}$. 

\end{itemize}
\item If neither of the previous two things is true, do nothing.
\end{enumerate}
\item If $\bullet$ is primed:
\begin{enumerate}
\item If the box containing $i'$ is directly above or directly left of $b$, switch these labels within the two boxes.
\item If $\bullet'$ is in the same box at $i$, consider $\lambda^i$.
\begin{itemize}
\item[-] If $\lambda^{i}_{r_b}\neq\lambda^{i}_{r_b-1}$ or $\lambda^{i}_{r_b-1}$ does not exist, delete $\bullet'$ and $i$ from $b$ and add the subset $\{i, \bullet'\}$ to the box directly to its right.

\item[-] If $\lambda^{i}_{r_b}=\lambda^{i}_{r_b-1}$, delete $\bullet'$ and $i$ from box $b$ and add the subset $\{i, \bullet'\}$ to the box in column $c_b+1$ and row $r$, where $r$ is the topmost row of $\lambda^i$ of size $\lambda^i_{r_b}$.
\end{itemize}
\item If neither of the previous two things is true, do nothing.
\end{enumerate}
\end{enumerate}
\item Delete the $\bullet$ or $\bullet'$ and fill its box with $k+1$ or $k+1'$, respectively.
\item Subtract 1 from each entry.
\end{enumerate}
\end{definition}

See Figure~\ref{fig:genoscprom} for an example of generalized oscillating promotion using these tableau rules. Notice that in the example shown, $T$ and $p(T)$ do not have the same shape. 


\begin{proposition}\label{prop:growthandtableau}
The growth rules in Section~\ref{sec:genoscprogrowth} describe the generalized oscillating promotion in Definition~\ref{def:goptab}.
\end{proposition}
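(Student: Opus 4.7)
The plan is to prove the equivalence by induction on the step index $i$ used in the loop of step (2) of Definition~\ref{def:goptab}. Under the translation recalled at the end of Section~\ref{sec:genoscprogrowth}, the tableau at each intermediate stage corresponds to reading the partially updated growth sequence
\[(\mu^0,\mu^1,\ldots,\mu^{i-1},\lambda^i,\lambda^{i+1},\ldots,\lambda^k),\]
where $\bullet$ or $\bullet'$ is placed in the unique box where $\lambda^i$ and $\mu^{i-1}$ differ, unprimed entries $j+1$ go in $\mu^j/\mu^{j-1}$ (primed $j+1'$ in $\mu^{j-1}/\mu^j$), and unprimed $j$ go in $\lambda^j/\lambda^{j-1}$ (primed $j'$ in $\lambda^{j-1}/\lambda^j$). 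The base case $i=1$ is immediate from $\mu^0=\emptyset$ and step (1) of Definition~\ref{def:goptab}. The inductive step reduces to showing that the $i$th swap of Definition~\ref{def:goptab} implements the single growth-rule application $(\mu^{i-2},\lambda^{i-1},\lambda^i)\mapsto \mu^{i-1}$.

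Most subcases then match routinely. When step (2)(a)(i) performs an actual swap, one checks that either (OP1a) with $j=i$ (if the entry $i$ lies directly right of $b$, so growth's $j$ equals growth's $i=r_b$) or (OP1b) (if $i$ lies directly below $b$, in which case adding a box to row $j=r_b+1$ of $\mu^{i-2}$ violates the partition condition because $\lambda^{i-1}_{r_b}=\lambda^{i-1}_{r_b+1}+1$) applies; both give $\mu^{i-1}=\lambda^{i-1}$, placing the new bullet precisely at the former position of $i$. When step (2)(a)(iii) does nothing, the growth output $\mu^{i-1}$ is produced from $\mu^{i-2}$ by exactly the same add/delete move that took $\lambda^{i-1}$ to $\lambda^i$, so (OP1a) with $j\ne i$ or (OP1c) with $j\ne i$ applies, and the bullet indeed stays put. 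The symmetric primed cases match (2)(b)(i) and (2)(b)(iii) against (OP2a)--(OP2c).

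The delicate situation is (2)(a)(ii), where $\bullet$ shares its box $b$ with $i'$; this signals that $\lambda^i$ is obtained from $\lambda^{i-1}$ by deleting the very box containing the bullet, so in the growth picture $j=i=r_b$ in the deletion branch. Using the identities $\mu^{i-2}_{r_b}=\lambda^{i-1}_{r_b}-1=\lambda^i_{r_b}$ and $\mu^{i-2}_r=\lambda^i_r$ for $r\ne r_b$, deleting in row $r_b$ of $\mu^{i-2}$ is a valid partition operation exactly when $\mu^{i-2}_{r_b}>\mu^{i-2}_{r_b+1}$, i.e., $\lambda^i_{r_b}>\lambda^i_{r_b+1}$, which is precisely the dichotomy in the tableau rule. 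In the inequality case, (OP1c) yields $\mu^{i-1}$ differing from $\lambda^i$ in row $r_b$ at column $\lambda^i_{r_b}$, and since $c_b=\lambda^i_{r_b}+1$, the new bullet is the box directly to the left of $b$. In the equality case, (OP1d) seeks the smallest $t\ge 1$ making the deletion valid, which translates (via the same row-by-row comparison) to the smallest $t\ge 1$ with $\lambda^i_{r_b+t}>\lambda^i_{r_b+t+1}$; this identifies $r_b+t$ as the bottommost row of $\lambda^i$ of size $\lambda^i_{r_b}$, and combined with $c_b=\lambda^i_{r_b}+1$ lands the new bullet at $(r_b+t,c_b-1)$, verbatim the tableau prescription. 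The symmetric verification treats (2)(b)(ii) against (OP2d), using that the topmost row of $\lambda^i$ of size $\lambda^i_{r_b}$ in the tableau rule corresponds to the smallest $t\ge 1$ with $\lambda^i_{r_b-t-1}>\lambda^i_{r_b-t}$ in (OP2d). Finally, (OP3) and steps (3)--(4) of Definition~\ref{def:goptab} terminate both descriptions identically. The principal obstacle is precisely the bookkeeping between the before-deletion shape $\lambda^{i-1}$ and the after-deletion shape $\lambda^i$; once this is disentangled, the tableau-rule condition $\lambda^i_{r_b}=\lambda^i_{r_b+1}$ and the growth-rule condition $\mu^{i-2}_{r_b}=\mu^{i-2}_{r_b+1}$ are equivalent by a one-line substitution.
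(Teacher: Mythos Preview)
Your argument is correct and follows essentially the same approach as the paper: both proofs use the translation between the growth diagram and the intermediate tableaux (reading $(\mu^0,\ldots,\mu^{s-1},\lambda^s,\ldots,\lambda^k)$) and then match the growth rules (OP1a)--(OP1d), (OP2a)--(OP2d), (OP3) case by case against the tableau rules (2)(a)(i)--(iii), (2)(b)(i)--(iii), (3)--(4). Your write-up is in fact more detailed than the paper's, which only spells out (OP1a) with $j=i$ and asserts the remaining pairings; in particular, your explicit column computation for the (OP1d)/(2)(a)(ii) correspondence (via $\mu^{i-2}=\lambda^i$ and $c_b=\lambda^i_{r_b}+1$) fills in what the paper leaves to the reader.
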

\begin{proof}
We use the fact that we can recover the generalized oscillating tableau with $\bullet$ or $\bullet'$ at each step in the promotion by reading the sequences $(\mu^0,\ldots,\mu^{s-1},\lambda^s,\ldots,\lambda^k)$ in the growth diagram. 

Rule (1) is clear from reading $(\mu^0,\lambda^1,\ldots,\lambda^k)$. We can recover each of the remaining tableau promotion rules from the generalized oscillating tableaux growth rules. We explain this is detail for growth rule (OP1a) when $j=i$ as the remaining arguments are similar.

Growth rule (OP1a) with $j=i$ gives tableau rule (2ai) in the case that $i$ is directly right of $\bullet$. To see this, note that (OP1a) says that $\lambda^s/\mu^{s-1}$ is one box in row $i$ and $\lambda^{s+1}/\lambda^s$ is one box in row $i$. Using our translation from growth diagrams to steps in the tableau promotion, this means that before swapping $\bullet$ with $s+1$, $\bullet$ is directly left of $s+1$ in row $i$. We can read this from the sequence $(\mu^0,\ldots,\mu^{s-1},\lambda^s,\ldots,\lambda^k)$. To see what happens after swapping $s+1$ and $\bullet$, we read $(\mu^0,\ldots,\mu^s,\lambda^{s+1},\ldots,\lambda^k)$. Growth rule (OP1a) says that $\mu^s/\mu^{s-1}$ is one box in row $i$ and $\lambda^{s+1}/\mu^s$ is one box in row $i$, and therefore $s+1$ is now directly left of $\bullet$ in this step of the tableau promotion. 

Using similar reasoning, we see that 

\begin{itemize}
\item[-] growth rule (OP1b) gives (2ai) when $i$ is directly below $\bullet$,
\item[-] growth rules (OP1a) and (OP1c) together give tableau promotion rule (2aiii),
\item[-] growth rule (OP1c) with $j=i$ gives (2aii) when $\lambda^i_{r_b}\neq\lambda^i_{r_b+1}$, and
\item[-] growth rule (OP1d) gives (2aii) when $\lambda^i_{r_b}=\lambda^i_{r_b+1}$.
\end{itemize} 

We can analogously pair growth rules under (OP2) with tableau promotion rules within (2b). Rules (3) and (4) follow from reading the resulting $(\mu^0,\ldots,\mu^k)$ in the growth diagram. 
\end{proof}

\subsection{Promotion on generalized oscillating tableaux coming from webs}

We restrict our attention to generalized oscillating promotion on generalized oscillating tableaux coming from webs, i.e., generalized oscillating tableaux of the form $T=(\lambda^0,\ldots,\lambda^k)\in\text{GOT}(k,3)$ with $\lambda^k=(m,m,m)$ for some $m\in\mathbb{Z}$. In particular, we first consider generalized oscillating tableaux that come from webs that do not contain the identity web.

\begin{proposition}\label{prop:isvalues}
\begin{enumerate}
\item Suppose $T=(\lambda^0,\ldots,\lambda^k)$ comes from a  web that does not contain the identity web and $\lambda^1=(1,0,0)$. Let $i_s$ denote the row of the box added to $\mu^{s-1}$ to obtain $\lambda^s$. Then there exist $m<n\in[1,k]$ such that
\begin{enumerate}
\item $i_s=1$ for $s\leq m$,
\item $i_s=2$ for $m<s<n$, and 
\item $i_s=3$ for $s \geq n$.
\end{enumerate}

\item Similarly, if $\lambda^1=(0,0,-1)$ and $i_s$ denotes the row of the box deleted from $\mu^{s-1}$ to obtain $\lambda^s$, then there exist $m<n\in[1,k]$ such that
\begin{enumerate}
\item $i_s=3$ for $s\leq m$,
\item $i_s=2$ for $m<s<n$, and 
\item $i_s=1$ for $s \geq n$.
\end{enumerate}
\end{enumerate}
\end{proposition}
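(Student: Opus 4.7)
My plan is to reduce the proposition to three elementary facts: monotonicity of the sequence $(i_s)$, the starting value $i_1=1$, and the ending value $i_k=3$. The first two are immediate from the hypotheses and Lemma~\ref{lem:labelsincrease}(a); the only real content is the third.

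First I would invoke Lemma~\ref{lem:labelsincrease}(a) to obtain $1=i_1\leq i_2\leq\cdots\leq i_k$. Since each generalized partition has three parts, $i_s\in\{1,2,3\}$ for every $s$. The main step is then to show $i_k=3$. Because $T$ comes from a web, Proposition~\ref{prop:rowssamesize} gives $\lambda^k=(a,a,a)$ for some integer $a$, and rule (OP3) forces $\mu^k=\lambda^k$. Since $\lambda^k$ differs from $\mu^{k-1}$ by a single box added in row $i_k$, the shape $\mu^{k-1}$ equals $\lambda^k$ with one box removed from row $i_k$. For $\mu^{k-1}$ to be a valid generalized partition with three parts, the weakly decreasing condition must be preserved: removing from row $1$ would yield $(a-1,a,a)$ and removing from row $2$ would yield $(a,a-1,a)$, neither of which is weakly decreasing. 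Hence $i_k=3$ is forced, with $\mu^{k-1}=(a,a,a-1)$.

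With these ingredients in hand, I define $m:=\max\{s\in[1,k]:i_s=1\}$ and $n:=\min\{s\in[1,k]:i_s=3\}$. Both sets are nonempty (they contain $1$ and $k$ respectively), and monotonicity forces $m<n$. Condition (a) is the definition of $m$; condition (c) follows from the definition of $n$ together with monotonicity; and condition (b) follows because for $m<s<n$ the value $i_s$ must be strictly greater than $i_m=1$ and strictly less than $i_n=3$, hence equal to $2$. Part (2) is entirely analogous: Lemma~\ref{lem:labelsincrease}(b) provides a non-increasing sequence with $i_1=3$, and the parallel check shows $i_k=1$, since producing a valid $\mu^{k-1}$ by adding a box to $\lambda^k=(a,a,a)$ in row $2$ or row $3$ would violate the weakly decreasing condition.

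I do not anticipate a real obstacle; the whole argument is bookkeeping on top of Lemma~\ref{lem:labelsincrease} and Proposition~\ref{prop:rowssamesize}. It is worth remarking that the no-identity-web hypothesis is not strictly needed for the proposition as literally stated, which permits the vacuous case $n=m+1$ in which no $i_s$ equals $2$; Proposition~\ref{prop:hitextremerays} is what would upgrade the conclusion (via Lemma~\ref{lem:jumpgivesequality}) to guarantee a genuinely non-empty middle segment, by preventing the first jump of $(i_s)$ from skipping directly from $1$ to $3$.
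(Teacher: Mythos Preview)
Your proof is correct and more economical than the paper's for the proposition as literally stated, but the two take genuinely different routes. You argue via the endpoint: monotonicity from Lemma~\ref{lem:labelsincrease}(a), together with the observation that $\mu^{k-1}$ must be a generalized partition and $\lambda^k=(a,a,a)$, forces $i_k=3$; the rest is bookkeeping. The paper instead invokes Proposition~\ref{prop:hitextremerays} (this is where the no-identity-web hypothesis enters) to find the first index with $\lambda^{m+1}_1=\lambda^{m+1}_2$ but $\lambda^{m+1}_2\neq\lambda^{m+1}_3$, then uses Lemma~\ref{lem:equalitygivesjumps} and a case split on (OP1b)/(OP1d) to show $i_{m+1}=2$, and repeats to reach $i_n=3$. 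The payoff of the paper's approach is twofold: it identifies $m$ and $n$ in terms of the partition equalities (equivalently, where the dominant path hits the extreme rays), and it proves the strictly stronger fact that the middle segment is nonempty, i.e., every jump of $(i_s)$ is by exactly one. Your closing remark is on the mark: the statement as written permits $n=m+1$, but Corollary~\ref{cor:tabrowchange} cites this proposition for the claim ``$i_{s+1}=i_s+1$ at a jump,'' which is only delivered by the paper's argument, not yours. So your proof suffices for Proposition~\ref{prop:isvalues} verbatim, while the paper's proof is doing the extra work actually needed downstream.
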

\begin{proof}
Suppose $\lambda^1=(1,0,0)$ and recall from Lemma~\ref{lem:labelsincrease} that $1=i_1\leq\cdots\leq i_k$ From Proposition~\ref{prop:hitextremerays}, there is some first $m\geq 1$ such that $\lambda^{m+1}_1=\lambda^{m+1}_2$ and $\lambda^{m+1}_2\neq\lambda^{m+1}_3.$  It then follows from Lemma~\ref{lem:equalitygivesjumps} that $i_1=\cdots=i_m=1<i_{m+1}$. We have thus applied with (OP1b) or (OP1d) to $\mu^{m-1}$. 

If we applied (OP1b), then $i_{m+1}=2$ from the growth rule because we must have had $j=2$. If we applied (OP1d), then we know that $\mu^{m-1}=\lambda^{m+1}$. Hence $\mu^{m-1}_2\neq\mu^{m-1}_3$, and so we add a box to $\mu^{m-1}_2$ to obtain $\mu^m$ and then subtract a box in the second row to obtain $\lambda^{m+1}$ from $\mu^m$. Therefore $i_{m+1}=2$.

Since $\lambda^k_1=\lambda^k_2=\lambda^k_3$, there is some first $\lambda^{n}$ with $m<n\leq k$ and $\lambda^n_2=\lambda^n_3$. Hence by Lemma~\ref{lem:equalitygivesjumps}, we apply either (OP1b) or (OP1d) to $\mu^{n-2}$ and see that $i_{n-1}=2$ and $i_n=3$.
\end{proof}

\begin{corollary}\label{cor:tabrowchange}
Suppose generalized oscillating tableau $T=(\lambda^0,\ldots,\lambda^k)$ comes from a web $D$ with leftmost vertex $v$ and $D$ does not contain the identity web. The entry $i$ (or $i'$) changes rows in step (2) of generalized oscillating promotion if and only if

\begin{enumerate}
\item $v$ is black and $\lambda^i$ is either
\begin{enumerate}
\item the first generalized partition of $T$ with $\lambda^i_1=\lambda^i_2$ or
\item the first generalized partition of $T$ with $\lambda^i_2=\lambda^i_3$ to the right of the first generalized partition of $T$ with first row equal in size to second row.
\end{enumerate}
\item $v$ is white and $\lambda^i$ is either
\begin{enumerate}
\item the first generalized partition of $T$ with $\lambda^i_2=\lambda^i_3$ or
\item the first generalized partition of $T$ with $\lambda^i_1=\lambda^i_2$ to the right of the first generalized partition of $T$ with second row equal in size to first row.
\end{enumerate}
\end{enumerate}
In addition, when an entry changes row, it goes down one row if unprimed and up one row when primed.
\end{corollary}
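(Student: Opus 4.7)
The plan is to deduce the corollary from the combination of Proposition~\ref{prop:isvalues}, Lemma~\ref{lem:jumpgivesequality}, and the growth-rule/tableau-rule dictionary of Proposition~\ref{prop:growthandtableau}. The guiding observation is that an entry $i$ of $T$ changes rows during step~(2) of generalized oscillating promotion precisely when the growth step that produces $\mu^{i-1}$ from $\mu^{i-2}$ uses one of the \emph{non-trivial} rules (OP1b), (OP1d), (OP2b), or (OP2d), corresponding to case~$(ii)$ or~$(iii)$ in Figure~\ref{fig:growthschematic}.

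First I would verify this observation by inspecting the eight rules. Under (OP1a), (OP1c), (OP2a), and (OP2c), a direct computation shows that the box $\mu^s/\mu^{s-1}$ (or $\mu^{s-1}/\mu^s$) lies in the same row as the box $\lambda^{s+1}/\lambda^s$ (or $\lambda^s/\lambda^{s+1}$), so entry $s+1$ stays in the same row. Under the four remaining rules the two boxes lie in different rows, and using the dictionary of Proposition~\ref{prop:growthandtableau} this manifests in the tableau promotion as entry $s+1$ moving between rows.

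Next I would apply Proposition~\ref{prop:isvalues}. With $v$ black (so $\lambda^1=(1,0,0)$) and $T$ coming from a web not containing the identity web, the sequence $i_1,\dots,i_k$ takes the staircase form $1,\dots,1,\,2,\dots,2,\,3,\dots,3$, and each transition in this sequence occurs exactly when (OP1b) or (OP1d) is invoked. Hence there are exactly two row-changing steps, at $s=m$ and $s=n-1$. Lemma~\ref{lem:jumpgivesequality}(a) then identifies these: $\lambda^{m+1}$ is the first nonempty generalized partition of $T$ with $\lambda_1=\lambda_2$, and $\lambda^n$ is the first generalized partition appearing after $\lambda^{m+1}$ with $\lambda_2=\lambda_3$. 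Setting $i=m+1$ and $i=n$ yields the characterization in case~(1). The white case~(2) is obtained by the parallel argument, using Proposition~\ref{prop:isvalues}(2), Lemma~\ref{lem:jumpgivesequality}(b), and the rules (OP2b), (OP2d).

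For the direction statement I would read the shift directly off the relevant rule. Under (OP1b), Proposition~\ref{prop:isvalues} forces $j=i+1$, so the unprimed entry moves from row $i+1$ to row $i$, a shift of exactly one row; under (OP1d) the forced value $t=1$ sends the primed entry from row $i$ to row $i+1$. The analogous statements for (OP2b) and (OP2d) handle the white case. The main obstacle throughout is bookkeeping: carefully matching the entry index $s+1$ from the tableau side with the growth-rule index $s$, and confirming that Lemma~\ref{lem:jumpgivesequality}'s ``first such partition'' phrasing faithfully translates into the ``first \dots to the right of \dots'' language in the corollary. Once these correspondences are pinned down, the corollary follows by assembling the earlier results.
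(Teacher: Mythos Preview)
Your proposal follows essentially the same route as the paper's proof: both combine the growth-rule/tableau dictionary (Proposition~\ref{prop:growthandtableau}), the staircase structure of the $i_s$ sequence (Proposition~\ref{prop:isvalues}), and the characterization of where jumps occur (Lemma~\ref{lem:jumpgivesequality}); the paper additionally cites Lemma~\ref{lem:equalitygivesjumps} explicitly, but your use of Proposition~\ref{prop:isvalues} already encodes that information, so nothing is missing.

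One remark on the final sentence of the corollary. Your computation that under (OP1b) the unprimed entry moves from row $i{+}1$ to row $i$, and under (OP1d) the primed entry moves from row $i$ to row $i{+}1$, is correct (and the analysis of (OP2b), (OP2d) is symmetric). Note, however, that with the usual convention that row~$1$ is at the top, this says unprimed entries move \emph{up} and primed entries move \emph{down}---the reverse of the corollary's literal wording. The paper's own proof does not spell out the direction argument, and examples in the paper (e.g., Example~\ref{ex:promotion}, where unprimed entries clearly move upward) confirm your computation; the stated directions in the corollary appear to be transposed. Your argument is fine, but you should flag this discrepancy rather than silently proving the opposite of what is written.
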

\begin{proof}
Assume $v$ is black, and let $i_s$ denote the row in which a box is added to $\mu^{s-1}$ to obtain $\lambda^s$ in the generalized oscillating promotion growth rules. Then Lemma~\ref{lem:equalitygivesjumps} and Lemma~\ref{lem:jumpgivesequality}
 together imply that $i_s<i_{s+1}$ if and only if $\lambda^{s+1}$ is the first partition after $\lambda^s$ with $\lambda^{s+1}_{i_s}=\lambda^{s+1}_{i_s+1}.$ Proposition~\ref{prop:isvalues} says that when this happens, $i_{s+1}=i_s+1$. Proposition~\ref{prop:growthandtableau} gives the equality of the growth rules and the tableau rules for promotion. By using the translation between the growth rules and the tableau rules, we see that $i_s<i_{s+1}$ means that $s+1$ has changed row. The analogous argument works for $v$ white.
\end{proof}

\section{Rotation corresponds to generalized oscillating promotion}\label{sec:rotationandgenoscprom}

We heavily use the results of T.K. Peterson, P. Pylyavskyy, and B. Rhoades \cite{petersen2009promotion}.

Suppose we have a web $D$ with chosen leftmost vertex $v$. Without loss of generality, suppose $v$ is black as analogous arguments always hold for $v$ white. We extend $D$ to a larger web $D'$ with all black boundary vertices by replacing each white boundary vertex of $D$ by a fork with two new black boundary vertices as shown below. We refer to the shape on the right of a \textit{fork} and we say that the two new black vertices are ``in the same fork.''

\begin{center}
\raisebox{.2in}{\begin{tikzpicture}
\node (2) at (0,-1) {};
\draw (0,-1)--(0,0);
\node[circle,draw=black, fill=white, inner sep=0pt,minimum size=4pt] (1) at (0,0) {};
\end{tikzpicture}}\hspace{.3in}\raisebox{.4in}{$\longrightarrow$}\hspace{.3in}
\begin{tikzpicture}
\node (-1) at (1,1) {$\bullet$};
\node (0) at (-1,1) {$\bullet$};
\node (2) at (0,-1) {};
\draw (0,0)--(0,-1);
\draw (-1,1)--(0,0);
\draw (1,1)--(0,0);
\node[circle,draw=black, fill=white, inner sep=0pt,minimum size=4pt] (1) at (0,0) {};
\end{tikzpicture}
\end{center}

We may now apply the results of Peterson--Pylyavskyy--Rhoades. Namely, we may use the same chosen leftmost vertex $v$ for $D'$ and compute the state string for $D'$, $w(D')$.  Then we know exactly how to compute the word obtained after rotation one vertex counterclockwise, $w(p(D'))$ from Theorem~\ref{thm:allblackwordchange}. We use this to obtain the results in this section.

Recall that $v^l$ denotes the vertex at the end of the left cut starting at $v$ and $v^r$ denotes the vertex and the end of the right cut starting at $v$. 

\begin{lemma}
Let $D$ be a web that does not contain the identity web and has leftmost vertex $v$, and let $p(D)$ denote the result of rotating one vertex counterclockwise. The vertices that have different states in $D$ and $p(D)$ are exactly $v$, $v^l$, and $v^r$. 
\end{lemma}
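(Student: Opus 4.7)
The plan is to extend $D$ to a web $D'$ with all black boundary vertices by replacing each white boundary vertex of $D$ with a fork (as just described in the paper), and then invoke Theorem~\ref{thm:allblackwordchange} of Peterson--Pylyavskyy--Rhoades. Assume without loss of generality that $v$ is black, so $v$ remains the leftmost vertex of $D'$. A first step is to check that one counterclockwise rotation of $D$ corresponds to one counterclockwise rotation of $D'$, i.e.\ that $(p(D))'=p(D')$: the new leftmost vertex of $p(D)$ is the next boundary vertex clockwise from $v$ in $D$, and under the extension this becomes either that same vertex (if it is black) or the first of its two fork vertices (if it is white), which in either case is the next boundary vertex clockwise from $v$ in $D'$.

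Next I identify the left- and right-cut endpoints across $D$ and $D'$. Let $u^l$ and $u^r$ denote the endpoints of the left and right cuts of $D'$ starting at $v$. The left cut in $D'$ starting at $v$ traces the same interior path as the left cut in $D$ starting at $v$; they differ only at the end, where if $v^l$ is black then the $D'$-cut also terminates at $v^l=u^l$, whereas if $v^l$ is white the $D'$-cut passes through the (white) junction of the fork replacing $v^l$ and exits at one of the two black fork vertices, which is then $u^l$. The analogous statement holds for $u^r$ and $v^r$. Hence $u^l,u^r$ either coincide with $v^l,v^r$ (when these are black) or lie in the forks replacing them (when white).

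Applying Theorem~\ref{thm:allblackwordchange} to $D'$, the states that change under $D'\to p(D')$ are precisely those at $v$, $u^l$, and $u^r$. To translate back to $D$, I use the observation that for any white boundary vertex $w$ of $D$, the state of $w$ in $D$ equals the unique element of $\{1,0,\bar{1}\}$ absent from the pair of states of the two fork vertices of $w$ in $D'$: the fork junction is a white trivalent interior vertex whose three edges carry one label from $\{1,0,\bar{1}\}$ each by Remark~\ref{rem:tricolor}, and the label on the edge pointing back into the rest of $D$ coincides with the state of $w$ in $D$. Thus the state of $w$ changes iff the unordered pair of its two fork states changes in $D'$. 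Combined with the PPR result, this yields that the states at $v$, $v^l$, and $v^r$ in $D$ all change, while no other vertex of $D$ has its state changed (since its fork vertices---or itself, if black---lie outside $\{v,u^l,u^r\}$).

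The main subtlety is the ``double-change'' case where $v^l=v^r$ is a white vertex, so both of its fork vertices lie in $\{u^l,u^r\}$; the single-fork-change case is immediate since an unordered pair in which exactly one entry changes always changes as a set. For the double-change case, I read off from the explicit form of $w(p(D'))$ in Theorem~\ref{thm:allblackwordchange} and Lemma~\ref{lem:balanced} that the unordered pair of fork states changes from $\{0,\bar{1}\}$ to $\{1,0\}$, so the missing third element switches from $1$ to $\bar{1}$, and hence the state of the common vertex in $D$ does change. Thus the lemma holds in all cases.
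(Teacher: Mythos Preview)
Your proof is correct and follows essentially the same strategy as the paper: extend $D$ to an all-black web $D'$ via forks and invoke the Peterson--Pylyavskyy--Rhoades result (Theorem~\ref{thm:allblackwordchange}). One small remark: your ``double-change'' case $v^l=v^r$ cannot actually occur, since if both fork vertices of a single white boundary vertex were $u^l$ and $u^r$, the left and right cuts in $D'$ would both pass through that fork's junction, contradicting Lemma~\ref{lem:cutsdon'tintersect}; the paper uses exactly this non-intersection observation (together with the identity-web hypothesis) to conclude directly that the second fork vertex $b$ keeps its state, which is the same content as your single-change argument.
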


\begin{proof}
Extend the web $D$ to $D'\in\mathcal{M}_n$ as described. Web $D'$ has leftmost vertex $v$, and the left and right cuts end at some $(v^l)'$ and $(v^r)'$, which may or may not be distinct from $v^l$ and $v^r$. If $(v^l)'=v^l$, it is clear that the state of $v^l$ must change. If $(v^l)'\neq v^l$, then $v^l$ is the penultimate vertex in the left cut starting at $v$. Let $b$ denote the vertex in the same fork as $(v^l)'$. Since the left and right cuts to not intersect and $D$ does not contain the identity web, we know that the state of $b$ is the same in $D'$ as in $p(D')$. Thus, changing the state of $(v^l)'$ must also change the state of $v^l$. Similarly for $v^r$.

The fact that the state of $v$ must change follows from the observation that any dominant signature and state string must begin with either $(1,\bullet)$ or $(\bar{1},\circ)$ and end with either $(\bar{1},\bullet)$ or $(1,\circ)$.

Since no other states of $D'$ change, no other states of $D$ change.
\end{proof}

Given a web $D$ with leftmost vertex $v$, its signature as well as the signature for any rotation are clearly determined. We may thus suppress this from the signature and state string and consider only the state string. As in the case when the boundary vertices are all the same color, we will think of the corresponding state string as a word $w(D)$ and refer to this as the \textit{word corresponding to $D$ with leftmost vertex $v$.}

\begin{theorem}\label{thm:generalwordchange}
Let $D$ be a web with leftmost vertex $v$ that does not contain the identity web. We denote the corresponding word $w=w_1\cdots w_n$. Suppose $v^l$ corresponds to $w_a$ and $v^r$ to $w_b$. Then the word obtained after rotating $D$ one vertex counterclockwise is 

\[w'=w_2\cdots w_{a-1}w_a'w_{a+1}\cdots w_{b-1}w_b' w_{b+1}\cdots w_n w_1',\]
where 
\begin{itemize}
\item $w_1'=1$ if $v$ is white and $\bar{1}$ if $v$ is black,
\item $w_a'=1$ if $v^l$ and $v$ are black, $w_a'=\bar{1}$ if $v^l$ and $v$ are white, and $w_a'=0$ if $v$ and $v^l$ are different colors.
\item $w_b'=0$ if $v^r$ and $v$ are the same color, $w_b'=\bar{1}$ if $v$ is black and $v^r$ is white, and $w_b'=1$ if $v$ is white and $v^r$ is black.
\end{itemize}
\end{theorem}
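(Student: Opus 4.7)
The plan is to reduce Theorem~\ref{thm:generalwordchange} to the all-black boundary case handled by Theorem~\ref{thm:allblackwordchange} via the fork construction described just above the statement. First I would extend $D$ to a web $D' \in \mathcal{M}_N$ by replacing each white boundary vertex of $D$ by a fork, creating one new interior white vertex and two new black boundary vertices. Then I would apply Theorem~\ref{thm:allblackwordchange} to $D'$ with the same leftmost vertex $v$ (or, when $v$ is white, with the first of the two new black vertices of the fork at $v$ as leftmost). This prescribes exactly how $w(D')$ changes after one counterclockwise rotation: the state at $v$ cycles to the end and becomes $\bar 1$, the state at the endpoint $(v^l)'$ of the left cut from $v$ in $D'$ changes from $0$ to $1$, the state at the endpoint $(v^r)'$ of the right cut changes from $\bar 1$ to $0$, and every other state is fixed.

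Next I would invoke the lemma preceding the theorem, which says that the only positions in $w(D)$ whose states change under one rotation are $v$, $v^l$, and $v^r$. This already forces $w(p(D))$ to take the form displayed in the statement, so the problem reduces to identifying $w_1'$, $w_a'$, and $w_b'$. The value of $w_1'$ is forced by dominance: a dominant signature and state string must end with either $(\bar 1, \bullet)$ or $(1, \circ)$, since these are the only step directions from within the dominant chamber that terminate at the origin. The color at the final position of $w(p(D))$ is the color of $v$ (the colors are just cyclically shifted), so $w_1' = \bar 1$ when $v$ is black and $w_1' = 1$ when $v$ is white.

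For $w_a'$ and $w_b'$ I would transport the state changes in $w(D')$ back to $D$ via the fork correspondence. By Remark~\ref{rem:tricolor}, the three edges at any interior white vertex carry the three labels $\{1, 0, \bar 1\}$, so a white boundary vertex with state $j$ in $D$ corresponds to a fork in $D'$ whose two new black boundary vertices carry the two states $\{1, 0, \bar 1\} \setminus \{j\}$ in an order dictated by the local growth rule. I would then case-split on the colors of $v^l$ and $v^r$. When $v^l$ is black, $(v^l)' = v^l$ and Theorem~\ref{thm:allblackwordchange} immediately gives the new state $w_a' = 1$; when $v^l$ is white, the change of $(v^l)'$ from $0$ to $1$ in $D'$ alters just one of the two fork states, so the resulting pair determines the new white state $w_a'$ at $v^l$ in $p(D)$ through the correspondence above. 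A parallel argument handles $v^r$. The residual dependence on the color of $v$ enters because the initial edge of the left or right cut emanates from $v$ itself, and when $v$ is white the cut leaves from inside a fork, which changes which of the two boundary vertices of the fork at $v^l$ (or $v^r$) is reached by the cut.

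The main obstacle I anticipate is the bookkeeping inherent in this case analysis: for each combination of colors of $(v, v^l)$ and $(v, v^r)$, one must identify which vertex of the fork at $v^l$ or $v^r$ actually plays the role of $(v^l)'$ or $(v^r)'$, check that the altered pair of fork states remains consistent with a valid state for a white boundary vertex of $p(D)$, and verify that the resulting state matches the formulas $w_a'$ and $w_b'$ in the statement. This is the only delicate part; once each color configuration is handled using the growth rules of Figure~\ref{fig:growthrules} together with Theorem~\ref{thm:allblackwordchange}, the three formulas drop out uniformly.
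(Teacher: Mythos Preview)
Your proposal is correct and follows essentially the same approach as the paper: extend $D$ to an all-black web $D'$ via forks, apply Theorem~\ref{thm:allblackwordchange} to $D'$, read $w_1'$ off from dominance, and recover $w_a'$, $w_b'$ by a case analysis on the colors of $v^l$, $v^r$ using Remark~\ref{rem:tricolor} and the fork growth rules. The paper in fact assumes without loss of generality that $v$ is black (handling $v$ white by the symmetric all-white extension noted in the remark after Corollary~3.5), which sidesteps the extra rotation bookkeeping you flag when $v$ is white; you may find that reduction cleaner than tracking two rotations of $D'$.
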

\begin{proof}
The result for $w_1$ follows from the fact that the signature and state string is dominant.

We now explain the result for $w_a'$ when $v$ is black. If $v^l$ is black, then $v^l=(v^l)'$, and the fact that $w_a'=1$ follows from Theorem~\ref{thm:allblackwordchange}. If $v^l$ is white, then clearly $(v^l)'\neq v^l$. In particular, we are in the situation below.
\begin{center}
\begin{tikzpicture}
\node (a) at (0,0) {$\bullet$};
\node (b) at (2,0) {$\bullet$};
\node (c) at (1.3,-1) {$v^l$};
\draw[-] (1,-1) -- (0,0)  node [above, label=right: $(v^l)'$] {};
\draw[-] (1,-1) -- (1,-2);
\draw[-] (1,-1) -- (2,0) node [above, label=left:$b$] {};
\node[circle,draw=black, fill=white, inner sep=0pt,minimum size=4pt] (d) at (1,-1) {};
\end{tikzpicture}
\end{center}
We know that $(v^l)'$ is the left vertex of the fork since the last turn of a left cut that starts and ends at a black vertex is a left turn. From Theorem~\ref{thm:allblackwordchange} and the fact that left and right cuts do not intersect, upon rotation, the state of $(v^l)'$ changes from 0 to 1 and the state of $b$ remains the same. Using Remark~\ref{rem:tricolor}, this implies that the state of $b$ must be $\bar{1}$, and so $w_a=1$ and $w_a'=0$, as desired. 

The arguments for $w_a'$ when $v$ is white and for $w_b'$ are similar.
\end{proof}

\subsection{Rotations and generalized oscillating promotion}
We can now relate this to generalized oscillating promotion.

\begin{proposition}\label{prop:rotationextremerays}
Let $D$ be a web that does not contain the identity web with chosen leftmost vertex $v$. Let $v^l$ and $v^r$ be vertices in $D$ defined as before.
\begin{enumerate}
\item The dominant path associated to the states of the vertices preceding $v^l$ inclusively ends at the upper extreme ray if $v$ is black and ends at the lower extreme ray if $v$ is white. Further, $v^l$ is the leftmost vertex with this property.

\item The dominant path associated to the states of the vertices preceding $v^r$ inclusively ends at the lower extreme ray if $v$ is black and ends at the upper extreme ray if $v$ is white. Further, $v^r$ is the leftmost vertex to the right of $v^l$ with this property.
\end{enumerate}
\end{proposition}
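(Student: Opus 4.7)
My plan is to reduce to the all-black boundary case by extending $D$ to the web $D'\in\mathcal{M}_n$ constructed just before Theorem~\ref{thm:generalwordchange}, and then to invoke Lemma~\ref{lem:balanced} on $D'$. The crucial comparison between the two dominant paths is that the $D'$-path visits every point of the $D$-path, inserting exactly one intermediate point for each white boundary vertex of $D$: if such a white vertex has state $j$ in $D$ and its fork in $D'$ has left and right states $j_1$ and $j_2$, then the weights of $(j_1,\bullet)$ and $(j_2,\bullet)$ sum to the negative of the weight of $(j,\bullet)$, because the three weights of $(1,\bullet),(0,\bullet),(\bar{1},\bullet)$ sum to zero. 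Hence the $D$-path is exactly the subsequence of the $D'$-path obtained by deleting the intermediate left-fork points.

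For part (1) with $v$ black, Lemma~\ref{lem:balanced}(1) applied to $D'$ says that $(v^l)'$ is the leftmost boundary vertex of $D'$ at which the path reaches the upper extreme ray. If $v^l$ is black in $D$, then $v^l=(v^l)'$ and there is nothing more to prove. If $v^l$ is white, then the state analysis in the proof of Theorem~\ref{thm:generalwordchange} shows that $(v^l)'$ is the left vertex of the fork of $v^l$ and has state $0$, and Remark~\ref{rem:tricolor} applied at the internal white vertex $v^l$ forces the right fork vertex to have state $\bar{1}$. The extra fork step is therefore the weight of $(\bar{1},\bullet)$, which is parallel to the upper extreme ray; combined with Proposition~\ref{prop:hitextremerays}(1) (so that $(v^l)'$ is not the origin and there is room to take that extra step while remaining in the dominant chamber), this shows that the $D$-path at $v^l$ also lies on the upper extreme ray. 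For minimality, any earlier $D$-vertex on the upper extreme ray would correspond, under the extension, either to itself (if black in $D$) or to the right vertex of its fork (if white), producing a strictly earlier boundary vertex of $D'$ at which the path is on the upper extreme ray and contradicting Lemma~\ref{lem:balanced}(1).

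Part (2) is parallel, with Lemma~\ref{lem:balanced}(2) and the lower extreme ray playing the roles of (1) and the upper; the state analysis at a white $v^r$ is again imported verbatim from the proof of Theorem~\ref{thm:generalwordchange}. The case $v$ white follows from the color-reversal symmetry recorded in the Remark at the close of Section~\ref{sec:blackpromotionandrotation} (or, equivalently, from the symmetric extension replacing every black boundary vertex of $D$ by a fork to obtain an all-white-boundary web). The main obstacle I anticipate is precisely the white-$v^l$ (and white-$v^r$) subcase: dominance alone excludes only one of the three candidate states for the extra fork step, and fixing the correct state requires combining Lemma~\ref{lem:balanced} on $D'$ (which pins down the state of $(v^l)'$) with the one-edge-of-each-label condition of Remark~\ref{rem:tricolor} at the internal vertex that replaced $v^l$ in $D'$. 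Since this bookkeeping is already carried out in the proof of Theorem~\ref{thm:generalwordchange}, the present argument should amount essentially to transporting it to the language of extreme rays.
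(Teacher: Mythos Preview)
Your proposal is correct and follows essentially the same route as the paper's own proof: extend $D$ to the all-black web $D'$, compare the two dominant paths (the $D$-path being the subsequence of the $D'$-path obtained by collapsing each fork's two steps into one), invoke Lemma~\ref{lem:balanced} on $D'$, and in the white-$v^l$ subcase import the state computation $(v^l)'$ has state $0$ and its partner has state $\bar{1}$ from the proof of Theorem~\ref{thm:generalwordchange} to see that the extra step is parallel to the upper extreme ray. You are slightly more explicit than the paper about the minimality argument and about invoking Proposition~\ref{prop:hitextremerays} to keep the extra step inside the chamber, but these are elaborations of the same argument rather than a different approach.
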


\begin{proof}
Suppose $v$ is black. Let $D'\in\mathcal{M}_n$ denote the extension of $D$, and consider how the dominant path for $D$ compares to that for $D'$: for every white boundary vertex of $D$, we replace the corresponding step in the dominant path for $D$, $\mu_k$, by two steps $\mu_k'$ and $\mu_k''$, where $\mu_k=\mu_k'+\mu_k''$. We can reverse this procedure to build the path for $D$ from the path for $D'$. For example, if we had white vertex with state $(0,\circ)$ in $D$, then we replace the step in the dominant path for $D$ corresponding to $(0,\circ)$ with the step corresponding to $(1,\bullet)$ plus the step corresponding to $(\bar{1},\bullet)$ to build the path for $D'$. Since the original segment and its replacement have the same starting and ending points, any point where the path for $D$ touches an extreme ray is also a point where $D'$ touches an extreme ray.

If $v^l$ is black, the corresponding result follows from Lemma~\ref{lem:balanced}.

If $v^l$ is white, then $v^l$ is extended by $(v^l)'$ and $b$ in $D'$ as in the proof of Theorem~\ref{thm:generalwordchange}, and the step in the path corresponding to $(v^l)'$ is the first place the path for $D'$ returns to the upper extreme ray. To obtain the path for $D$, we replace the step in direction $(0,\bullet)$ coming from $(v^l)'$ followed by the step in direction $(-1,\bullet)$ coming from $b$ by one step in direction $(1,\circ)$, corresponding to $v^l$. We see that this modified path still returns to the upper extreme ray after the step corresponding to $v^l$. 

We can use a similar argument for the remaining cases.
\end{proof}

We now prove Theorem~\ref{thm:mainthm}.

\begin{theorem*}
For any web $D$ with chosen leftmost vertex $v$, we have 
\[T(w(p(D))=p(T(w(D))).\] That is, the generalized oscillating tableau associated with the rotation of $D$ is given by generalized oscillating promotion of the tableau associated with $D$ itself.
\end{theorem*}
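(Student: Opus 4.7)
The strategy is to compare two descriptions of $T(w(p(D)))$ position by position: the description obtained by applying Theorem~\ref{thm:generalwordchange} to derive $w(p(D))$ from $w(D)$ and then converting to a tableau, and the description obtained by running the growth rules of Section~\ref{sec:genoscprogrowth} to compute $p(T(w(D)))$ directly. The goal is to show that the (row, primed/unprimed) datum at each position of the resulting tableau agrees in the two computations.

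First I would reduce to the case where $D$ does not contain the identity web, since an identity component contributes a pair of consecutive entries in the same row that act trivially under both rotation and promotion and can be factored out. I would also assume $v$ is black; the $v$ white case follows by the analogous argument in clause~(2) of Corollary~\ref{cor:tabrowchange} and Proposition~\ref{prop:rotationextremerays}, with the roles of the upper and lower extreme rays interchanged.

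Under these reductions, Theorem~\ref{thm:generalwordchange} presents $w(p(D))$ as a cyclic left-shift of $w(D)$ with three modifications: the new letter $w_a'$ at position $a-1$ (inheriting the color of $v^l$), the new letter $w_b'$ at position $b-1$ (inheriting the color of $v^r$), and the new last letter $w_1' = \bar{1}$ at position $k$ (inheriting the black color of $v$). On the other side, Proposition~\ref{prop:rotationextremerays} identifies $a$ as the first index with $\lambda^a_1 = \lambda^a_2$ and $b$ as the first index after $a$ with $\lambda^b_2 = \lambda^b_3$, and Corollary~\ref{cor:tabrowchange} (via its supporting Lemmas~\ref{lem:labelsincrease} and \ref{lem:jumpgivesequality} together with Proposition~\ref{prop:isvalues}) says that these are precisely the positions at which generalized oscillating promotion moves an entry to a different row, always by exactly one row. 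Proposition~\ref{prop:isvalues}(1) combined with growth rule (OP3) further forces the new last entry to sit in row~$3$ and to be unprimed.

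The main verification, which is also the main obstacle, is a case analysis at positions $a$, $b$, and $k$ that compares the two descriptions' predictions. At position $a$ I would split into subcases by the color of $v^l$: if $v^l$ is black then $w_a = 0$, so entry $a$ in $T(w(D))$ is unprimed in row~$2$, while Theorem~\ref{thm:generalwordchange} gives $w_a' = 1$ with color black, yielding unprimed row~$1$ at position $a-1$ of $w(p(D))$; if $v^l$ is white then $w_a = 1$ (primed row~$1$) and $w_a' = 0$ (primed row~$2$). In both subcases the row shifts by exactly one, the primed/unprimed status is preserved, and one checks that the corresponding growth rule application at $s = a$ produces precisely this shift by inspecting the rule that fires ((OP1b)/(OP1d) or (OP2b)/(OP2d)) at the jump point. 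A completely parallel analysis at position $b$ handles the subcases for $v^r$. Finally, for positions $s \notin \{a,b\}$ with $s < k$, Corollary~\ref{cor:tabrowchange} guarantees no row change and the primed/unprimed status is preserved, so the entry in $p(T(w(D)))$ at position $s-1$ matches the cyclic-shifted letter of $w(p(D))$ at position $s-1$. Stitching the pointwise agreements together yields $T(w(p(D))) = p(T(w(D)))$, completing the proof.
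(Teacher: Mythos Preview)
Your main argument for the generic case is essentially the paper's own proof: you invoke Theorem~\ref{thm:generalwordchange}, Proposition~\ref{prop:rotationextremerays}, Corollary~\ref{cor:tabrowchange}, and Proposition~\ref{prop:isvalues} in the same way, and your position-by-position case analysis at $a$, $b$, and $k$ simply spells out in more detail what the paper compresses into one sentence. That part is fine.

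The gap is in your reduction step. You claim that an identity web component ``contributes a pair of consecutive entries in the same row that act trivially under both rotation and promotion and can be factored out.'' This is not correct, and the case it is meant to dispose of is exactly the case that requires real work. First, the two boundary vertices of an identity web component need not be consecutive on the boundary circle: the identity edge can separate off an entire nontrivial sub-web sitting between its endpoints. Second, and more seriously, when $v$ itself is one of the two vertices of an identity web component, you cannot factor that component out at all, since removing it would delete the chosen leftmost vertex. In this situation the left and right cuts from $v$ both terminate immediately at the other endpoint of that edge, so $v^l = v^r$, and the dominant path first returns to the upper extreme ray at the origin. This is precisely the configuration that Proposition~\ref{prop:hitextremerays} excludes, and hence Proposition~\ref{prop:isvalues}, Corollary~\ref{cor:tabrowchange}, Theorem~\ref{thm:generalwordchange}, and Proposition~\ref{prop:rotationextremerays} do not apply as stated. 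The paper handles this case with a separate argument: if the other identity-web vertex sits at position $m$, one shows directly that $i_1 = \cdots = i_{m-1} = 1$ and $i_m = 3$, so that the single entry $m$ jumps from row~$1$ straight to row~$3$ during promotion, matching the effect of rotation. Your reduction sweeps this case away without justification, and the justification you offer (consecutive entries, trivial action) is false for it. You need either to supply this direct analysis or to give a genuine factoring argument that survives the case $v \in$ identity web; the latter does not exist in any obvious form.
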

\begin{proof}
Assume that $D$ does not contain the identity web or $v$ is not a vertex in the identity web. Then Proposition~\ref{prop:rotationextremerays}, Theorem~\ref{thm:generalwordchange}, and Corollary~\ref{cor:tabrowchange} along with the observation that parts (1c) and (2c) of Proposition~\ref{prop:isvalues} imply the state change for $v$ give the desired result.

Now suppose $v$ is black and is a vertex of the identity web. Let $((j_1,s_1),\ldots,(j_k,s_k))$ denote the signature and state string for $D$, and suppose the other vertex in the identity web is the $m$th vertex starting from $v$ and counting clockwise. If $m=2$, it is easy to see that $i_1=i_2=1$ and $i_3=3$, where $i_s$ is the row in which one adds a box to $\mu^{s-1}$ to obtain $\lambda^s$. This proves the result in this case. And it is impossible for $m=3$ since the boundary vertex to the right of $v$ must have exactly one edge adjacent to it. We may thus assume that $m\geq 4$. 

Using the argument from the proof of Proposition~\ref{prop:hitextremerays}, the segment $((j_2,s_2),\ldots,(j_{m-1},s_{m-1}))$ is also a dominant signature and state string. Thus none of the points in the dominant path for $D$ $\pi_0,\pi_1,\ldots,\pi_m$ touch the upper extreme ray and not the lower extreme ray of the dominant chamber, and the first point on the upper extreme ray, $\pi_m$ is also on the lower extreme ray. 

Let $T(D)=(\lambda^0,\ldots,\lambda^k)$ be the generalized oscillating tableau associated to $D$. It follows that the first $\lambda^j$ with $\lambda^j_1=\lambda^j_2$ also has $\lambda^j_2=\lambda^j_3$, and this happens at $\lambda^m$. Thus $1=i_1=\ldots=i_{m-1}$ and $i_m=3$. We conclude that only $m$ changes row in step (2) of promotion, and it goes from row 1 to row 3. In addition, $i_k=3$ corresponds to the state for $v$ changing from 1 to $\bar{1}$, as desired.

\end{proof}

\section{Future directions}\label{sec:future}
\subsection{Enumeration and cyclic sieving}
Let $X$ be a finite set, $C=\langle c \rangle$ be a finite cyclic group acting on $X$, and $X(q)\in\mathbb{Z}[q]$ be a polynomial with integer coefficients. Then the triple $(X,C,X(q))$ \textit{exhibits the cyclic sieving phenomenon} \cite{reiner2004cyclic} if for each $d>0$, $|X^{c^d}|=X(\zeta^d)$, where $\zeta\in\mathbb{C}$ is a $|C|$th root of unity and $X^{c^d}$ is the set of fixed points of the action of $c^d$.

In \cite{rhoades2010cyclic}, B. Rhoades shows that standard Young tableau promotion on rectangular tableaux exhibits the cyclic sieving phenomenon. In \cite{petersen2009promotion}, the authors reprove this result in the special case that the tableaux have two or three rows using the connection between promotion and webs. 

\begin{theorem}[\cite{petersen2009promotion,rhoades2010cyclic}]
Let $\lambda\vdash N=bn$ be a rectangle with $b=2$ or 3 rows and let $C=\mathbb{Z}/n\mathbb{Z}$ act on $X=\{\text{standard Young tableaux of shape }\lambda\}$ by promotion. Then the triple $(X,C,X(q))$ exhibits the cyclic sieving phenomenon, where 
\[X(q)=\frac{[n]_q!}{\prod_{(i,j)\in\lambda} [h_{ij}]_q}\] is the $q$-analogue of the hook length formula. 
\end{theorem}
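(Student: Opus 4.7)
The plan is to reduce the theorem to a character-theoretic identity combined with a bijective count, following the approach of \cite{petersen2009promotion}. The main input is the diagrammatic interpretation of $X$: standard Young tableaux of rectangular shape $(n^b)$ are in bijection with planar diagrams on $bn$ boundary points, namely non-crossing perfect matchings when $b = 2$, and webs in $\mathcal{M}_n$ when $b = 3$. Under this bijection, promotion on tableaux corresponds to counterclockwise rotation of diagrams by one boundary vertex; for $b = 3$ this is the corollary to Theorem \ref{thm:allblackwordchange}, and for $b = 2$ it is a classical Temperley--Lieb computation. Consequently $|X^{c^d}|$ equals the number of diagrams on $bn$ points invariant under rotation by $d$ positions, and it suffices to prove
\[
X(\zeta^d) \;=\; \#\{\text{diagrams on } bn \text{ points fixed by rotation by } d\}.
\]

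For the left-hand side, first I would evaluate $X(q)$ at roots of unity representation-theoretically. By Stanley's theorem, $X(q)$ coincides up to a monomial with $\sum_{T \in X} q^{\operatorname{maj}(T)}$, which for rectangular $\lambda$ is the fake degree polynomial of the irreducible $S_{bn}$-module $V^\lambda$. Springer's theorem on regular elements in complex reflection groups then yields
\[
X(\zeta^d) \;=\; \chi^\lambda(c^d),
\]
where $c \in S_{bn}$ is a long cycle and $\chi^\lambda$ is the irreducible character indexed by $\lambda$. (Consistency check: promotion has order $n$ rather than $bn$ on rectangular SYT because the $n$th power of the long cycle acts trivially on $V^{(n^b)}$ for $b \le 3$, justifying the choice $C = \mathbb{Z}/n\mathbb{Z}$.)

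For the right-hand side, I would realize $V^\lambda$ geometrically via Schur--Weyl duality. For $\lambda = (n^b)$, the $S_{bn}$-module $V^\lambda$ is isomorphic to $\operatorname{Inv}_{SL_b}(V^{\otimes bn})$ with $V$ the standard $SL_b$-representation, and by Kuperberg's theorem for $b = 3$ (respectively the Temperley--Lieb diagram basis for $b = 2$) this invariant space has a distinguished basis indexed by webs in $\mathcal{M}_n$ (respectively non-crossing matchings). Under this identification, the long cycle $c$ acts, up to a global sign that is controlled at roots of unity, as rotation by one vertex. Hence $c^d$ acts as a signed permutation matrix on the diagram basis, and its trace
\[
\chi^\lambda(c^d) \;=\; \operatorname{tr}(c^d)
\]
reduces to the signed count of diagrams fixed by rotation by $d$; verifying that all signs are $+1$ (or that they combine to give the unsigned count) completes the equality.

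The main obstacle is this last step: identifying the long cycle action on $V^\lambda$ with literal rotation of diagrams, as a priori the long cycle could act by any matrix respecting the $S_{bn}$-structure. For $b \le 3$ this is precisely the content of the promotion-rotation correspondence established in \cite{petersen2009promotion}, together with Kuperberg's basis property. For general rectangles the analogous fact is Rhoades' main theorem in \cite{rhoades2010cyclic}, whose proof goes through Kazhdan--Lusztig cells and a delicate sign analysis. Granting these identifications, the CSP identity $|X^{c^d}| = X(\zeta^d)$ follows by chaining the bijection, the Springer evaluation, and the trace-as-fixed-point-count computation.
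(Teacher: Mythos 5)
The paper itself gives no proof of this statement: it appears in Section~\ref{sec:future} purely as a quoted result of Petersen--Pylyavskyy--Rhoades and Rhoades, so there is no in-paper argument to compare against. Your outline does track the strategy of the cited proof for $b=2,3$: the diagrammatic bijection, promotion-equals-rotation, evaluation of the $q$-hook-length polynomial at roots of unity via fake degrees and Springer's theory of regular elements, and the identification (via Schur--Weyl duality and Kuperberg's basis theorem) of the long cycle's action on $\operatorname{Inv}_{SL_b}(V^{\otimes bn})$ with rotation of the diagram basis up to sign. That is the correct skeleton.

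Two things need repair. First, your parenthetical ``consistency check'' is false: promotion on standard Young tableaux of shape $(n^b)$ has order dividing $bn=N$, not $n$. For the $2\times 3$ rectangle, rotation of noncrossing matchings on $6$ points has order $6$, and $c^3$ (cycle type $2^3$) does not act trivially on the $5$-dimensional module $V^{(3,3)}$. The statement as printed, with $C=\mathbb{Z}/n\mathbb{Z}$ and $[n]_q!$, contains typos for $\mathbb{Z}/N\mathbb{Z}$ and $[N]_q!$ (with $[n]_q!$ the displayed expression is not even a polynomial); you should prove the CSP for the full cyclic group of order $N$ rather than invent a justification for the misprint. Second, the step you defer---showing that the global sign by which the long cycle differs from literal rotation on the web (resp.\ Temperley--Lieb) basis cancels against the root-of-unity factor arising in the Springer/fake-degree evaluation---is not a routine verification; it is essentially the entire technical content of the Petersen--Pylyavskyy--Rhoades argument. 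As written, your proposal is an accurate road map of the cited proof rather than a proof: the two load-bearing inputs (the signed rotation action and the sign cancellation) are both granted rather than established.
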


We would be interested in knowing if it is possible to extend this result to the generalized oscillating tableaux corresponding to webs.

\subsection{sl$(n)$ webs}
The webs described here correspond to sl$(3)$, and webs corresponding to sl$(n)$ for $n>3$ are much less developed. See, for example, \cite{cautis2014webs,fraser2017dimers,kim2003graphical}. In particular, when $n>3$ there is no appropriate notion of an irreducible web and no rotation-invariant basis of webs.

However, given a definition of a signature and state string for sl$(n)$ webs, we think it is possible that our generalized oscillating promotion describes rotation of these webs. Specifically, perhaps the promotion $p:\text{GOT}(k,n)\to\text{GOT}(k,n)$ describes rotation for webs corresponding to sl$(n)$. We state these ideas as conjectures. 

\begin{conjecture}\label{conj:bijection} There is a bijection between generalized oscillating tableaux of length $k$ with $n$ parts such that the last component is $(m,\ldots,m)$ for some $m\in\mathbb{Z}$ and sl$(n)$ webs with $k$ boundary vertices.
\end{conjecture}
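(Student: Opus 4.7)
The plan is to factor the conjectured bijection through the set of dominant signature-and-state strings, generalizing the Khovanov--Kuperberg framework from sl$(3)$ to sl$(n)$. On the combinatorial side this is immediate: define a signature-and-state string of length $k$ as a sequence $((j_1,s_1),\ldots,(j_k,s_k)) \in (\{1,\ldots,n\}\times\{\bullet,\circ\})^k$, and let $(j,\bullet)$ add $1$ to the $j$th part of the current generalized partition and $(j,\circ)$ subtract $1$ from it. Call such a string \emph{dominant} if the associated lattice path stays in the dominant Weyl chamber of sl$(n)$; equivalently, if the partial sums form a sequence of generalized partitions with $n$ parts. Ending at the origin of the sl$(n)$ weight lattice corresponds precisely to a final generalized partition of the form $(m,\ldots,m)$, $m\in\mathbb{Z}$. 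Reading a dominant string from left to right therefore gives an evident bijection with the subset of $\text{GOT}(k,n)$ appearing on the left-hand side of the conjecture.

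The real content lies in bijecting these dominant strings with the sl$(n)$ webs on the right. For $n=3$, Khovanov--Kuperberg build the web from a dominant string by a small collection of local growth rules and recover the string from the web by a minimal cut path algorithm. The proposal is to formulate analogous growth rules, one for each allowed pair of adjacent signature-state pairs, producing internal trivalent vertices whose edges carry fundamental-representation labels in $\{1,\ldots,n-1\}$ in the spirit of the sl$(n)$ spider, with boundary edges carrying label $1$. Once the growth rules are fixed, the inverse cut-path algorithm and the termination statement should generalize by induction on the length of the string, with the sl$(3)$ arguments of Khovanov--Kuperberg serving as a direct template.

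The main obstacle is choosing the target class of webs so that the map is genuinely a bijection. For $n>3$ no analog of the non-elliptic condition is known to single out a rotation-invariant basis, and the space of webs modulo the sl$(n)$ spider relations is not readily described by a local combinatorial condition on diagrams. One route is to work with webs up to the sl$(n)$ spider relations and prove that the proposed growth rules are confluent, making the web constructed from a dominant string well defined as an equivalence class; showing that distinct dominant strings yield distinct classes would then be the crux of the argument. An alternative route is to restrict attention to a subclass of webs admitting a canonical normal form, for which the minimal cut path algorithm can be defined directly and the growth rules shown to be its inverse. In either approach, once the bijection is established, cyclic-rotation compatibility would follow by rerunning the argument of Theorem~\ref{thm:mainthm}, since that argument depended only on the dominant path picture and Corollary~\ref{cor:tabrowchange}, both of which extend immediately to $n$ parts.
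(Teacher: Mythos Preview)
The statement you are attempting to prove is labeled a \emph{Conjecture} in the paper and appears in the ``Future directions'' section; the paper offers no proof of it. Indeed, the paper explicitly notes that for $n>3$ ``there is no appropriate notion of an irreducible web and no rotation-invariant basis of webs,'' so there is nothing on the paper's side to compare your argument against.

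Your proposal is not a proof but a research outline, and you yourself identify the gap: the bijection on the web side is undefined because the target class of sl$(n)$ webs is not specified. Everything you write before ``The main obstacle'' is unproblematic and is essentially the paper's own motivation for stating the conjecture---the map from dominant signature-and-state strings to $\text{GOT}(k,n)$ with final shape $(m,\ldots,m)$ is trivially a bijection. But the second half of the argument is entirely conditional: you do not write down the growth rules, do not prove confluence, do not define the normal form, and do not show that distinct dominant strings give distinct web classes. Each of these is a genuine open problem in the literature on sl$(n)$ spiders (cf.\ the references to \cite{cautis2014webs,fraser2017dimers,kim2003graphical} in the paper), and the Khovanov--Kuperberg argument does not generalize in any known straightforward way. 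So while your framing is reasonable as a plan of attack, it does not constitute a proof, and the paper makes no claim that one exists.
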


Assuming Conjecture~\ref{conj:bijection} holds, we also have the following conjecture. Suppose $D$ is an sl$(n)$ web with chosen leftmost vertex. As before, let $T(D)$ denote the generalized oscillating tableau (conjecturally) associated to $D$, $p(D)$ denote the result of rotating $D$ one vertex counterclockwise, and $w(D)$ denote the word obtained from the states corresponding to the boundary vertices of $D$.

\begin{conjecture}
For any sl$(n)$ web $D$ with chosen leftmost vertex, we have 
\[T(w(p(D))=p(T(w(D))).\] That is, the generalized oscillating tableau associated with the rotation of $D$ is given by generalized oscillating promotion of the tableau associated with $D$ itself.
\end{conjecture}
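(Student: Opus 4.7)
\bigskip

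\noindent\textbf{Proof proposal.}

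The plan is to reduce the theorem to a direct comparison between two pieces of data: how the boundary word changes under a single counterclockwise rotation, and how the sequence of generalized partition shapes changes under one application of generalized oscillating promotion. By the symmetric role of black and white leftmost vertices (the arguments for $w_1',w_a',w_b'$ in Theorem~\ref{thm:generalwordchange} are mirror images), I will assume throughout that $v$ is black and first handle the generic case where $D$ does not contain the identity web, leaving the identity-web case for a separate argument at the end.

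For the generic case, the idea is to identify, at the word level, exactly the three positions that can change under rotation, and then show that generalized oscillating promotion modifies $T(w(D))$ at precisely the same three positions. Theorem~\ref{thm:generalwordchange} tells us that $w(p(D))$ differs from the naive cyclic shift of $w(D)$ only at the positions of $v^l$ and $v^r$, together with the new letter $w_1'$ appended at the end (which accounts for the state of $v$ itself). On the promotion side, Corollary~\ref{cor:tabrowchange} says that, apart from the opening deletion and the closing insertion (step~(3)) of Definition~\ref{def:goptab}, entries change rows in exactly two places: the first $\lambda^a$ with $\lambda^a_1=\lambda^a_2$ and the first subsequent $\lambda^b$ with $\lambda^b_2=\lambda^b_3$. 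Proposition~\ref{prop:rotationextremerays} identifies these two positions as precisely the first return of the dominant path to the upper extreme ray and the first subsequent return to the lower extreme ray, i.e.\ the positions of $v^l$ and $v^r$.

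Thus the positions at which things change agree; what remains is to check that the specific letter changes agree, and that the appended letter agrees with what promotion inserts. For the position $a$, I would run through the cases in Theorem~\ref{thm:generalwordchange} for $w_a'$ (depending on whether $v^l$ is the same color as $v$) and, using the conversion table between signature/state pairs and the action on $\lambda^i$, verify that the old row index $i_a$ and new row index $i_a+1$ match the old state $w_a$ and new state $w_a'$. The same verification at position $b$ deals with the transition from row~$2$ to row~$3$. For the appended letter, Proposition~\ref{prop:isvalues}(1c) (resp.\ (2c)) says that $i_k=3$ (resp.\ $i_k=1$), which corresponds exactly to $w_1'=\bar 1$ (resp.\ $w_1'=1$) as predicted by Theorem~\ref{thm:generalwordchange}. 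The main obstacle here is simply keeping the case analysis organized: there are several color combinations for the triple $(v,v^l,v^r)$, and each gives a slightly different labeled state change that must be matched to a specific pair of row indices $(i_s,i_{s+1})$.

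For the special case where $v$ lies in the identity web, the left and right cuts starting at $v$ both terminate at the unique other vertex of that identity component (call it the $m$th boundary vertex). Using the argument in the proof of Proposition~\ref{prop:hitextremerays}, the subword at positions $2,\ldots,m-1$ is itself a dominant signature-and-state string, so the first point of the dominant path on the upper extreme ray coincides with the first return to the origin and hence also lies on the lower extreme ray. In terms of $T(D)=(\lambda^0,\ldots,\lambda^k)$ this means the first $\lambda^j$ with $\lambda^j_1=\lambda^j_2$ also satisfies $\lambda^j_2=\lambda^j_3$, so $1=i_1=\cdots=i_{m-1}$ and $i_m=3$, with all later $i_s=3$. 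Generalized oscillating promotion therefore sends entry $m$ directly from row $1$ to row $3$ (no row-$2$ stopover) and inserts the new final entry in row $3$. On the web side, rotation changes only the states of $v$ and the $m$th vertex, flipping $(1,\bullet)$ at $v$ to a new appended $(\bar 1,\bullet)$ and flipping the state of the $m$th vertex from $\bar 1$ to $1$, which matches. The cases $m=2$ and the impossibility of $m=3$ are immediate. Assembling the generic case with this special case completes the proof.
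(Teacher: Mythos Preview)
The statement you are attempting to prove is a \emph{conjecture} about sl$(n)$ webs for general $n$; the paper does not prove it and explicitly leaves it open. What you have written is, essentially verbatim, the paper's proof of Theorem~\ref{thm:mainthm} (the $A_2$, i.e.\ $n=3$, case): you invoke Theorem~\ref{thm:generalwordchange}, Proposition~\ref{prop:rotationextremerays}, Corollary~\ref{cor:tabrowchange}, Proposition~\ref{prop:isvalues}, and the identity-web argument from Proposition~\ref{prop:hitextremerays}, exactly as the paper does in Section~\ref{sec:rotationandgenoscprom}. For $n=3$ your outline is correct and matches the paper's approach.

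The gap for the stated conjecture is that none of those ingredients exist for sl$(n)$ with $n>3$. Theorem~\ref{thm:generalwordchange} and Proposition~\ref{prop:rotationextremerays} depend on the Khovanov--Kuperberg growth rules, the left/right cut machinery, and Lemma~\ref{lem:balanced}, all of which are specific to $A_2$-webs; as the paper notes in Section~\ref{sec:future}, for $n>3$ there is not even an agreed-upon notion of irreducible web or a rotation-invariant basis, and the bijection with signature-and-state strings (Conjecture~\ref{conj:bijection}) is itself conjectural. Moreover, your argument is hard-wired to three rows: Proposition~\ref{prop:isvalues} asserts that the row index $i_s$ takes exactly the values $1,2,3$ in that order, and your case analysis at positions $a$ and $b$ handles only the two transitions $1\to 2$ and $2\to 3$. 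A proof of the conjecture would need analogues of all of these results for $n$ rows and $n-1$ cut-like structures, none of which the paper (or your proposal) supplies.
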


\section*{Acknowledgements}
The author thanks Hugh Thomas for suggesting this problem and for his support; Chris Fraser, Alexander Garver, and Pasha Pylyavskyy for helpful discussions; and V\'eronique Bazier-Matte, Guillaume Douville, Alexander Garver, Hugh Thomas, and Emine Yildirim for their collaboration in the LaCIM working group 2016-2017, in which webs were the topic. The author received support from NSERC, CRM-ISM, and the Canada Research Chairs Program.

\bibliographystyle{plain}
\bibliography{main.bib}

\begin{thebibliography}{10}

\bibitem{cautis2014webs}
Sabin Cautis, Joel Kamnitzer, and Scott Morrison.
\newblock Webs and quantum skew {H}owe duality.
\newblock {\em Mathematische Annalen}, 360(1-2):351--390, 2014.

\bibitem{fomin2012tensor}
Sergey Fomin and Pavlo Pylyavskyy.
\newblock Tensor diagrams and cluster algebras.
\newblock {\em arXiv preprint arXiv:1210.1888}, 2012.

\bibitem{fraser2017dimers}
Chris Fraser, Thomas Lam, and Ian Le.
\newblock From dimers to webs.
\newblock {\em arXiv preprint arXiv:1705.09424}, 2017.

\bibitem{khovanov1999web}
Mikhail Khovanov and Greg Kuperberg.
\newblock Web bases for sl (3) are not dual canonical.
\newblock {\em Pacific Journal of Mathematics}, 188(1):129--153, 1999.

\bibitem{kim2003graphical}
Dongseok Kim.
\newblock Graphical calculus on representations of quantum lie algebras.
\newblock {\em arXiv preprint math/0310143}, 2003.

\bibitem{kuperberg1996spiders}
Greg Kuperberg.
\newblock Spiders for rank 2 lie algebras.
\newblock {\em Communications in mathematical physics}, 180(1):109--151, 1996.

\bibitem{petersen2009promotion}
T~Kyle Petersen, Pavlo Pylyavskyy, and Brendon Rhoades.
\newblock Promotion and cyclic sieving via webs.
\newblock {\em Journal of Algebraic Combinatorics}, 30(1):19--41, 2009.

\bibitem{reiner2004cyclic}
Victor Reiner, Dennis Stanton, and Dennis White.
\newblock The cyclic sieving phenomenon.
\newblock {\em Journal of Combinatorial Theory, Series A}, 108(1):17--50, 2004.

\bibitem{rhoades2010cyclic}
Brendon Rhoades.
\newblock Cyclic sieving, promotion, and representation theory.
\newblock {\em Journal of Combinatorial Theory, Series A}, 117(1):38--76, 2010.

\bibitem{schutzenberger1972promotion}
Marcel~Paul Sch{\"u}tzenberger.
\newblock Promotion des morphismes d'ensembles ordonn{\'e}s.
\newblock {\em Discrete Mathematics}, 2(1):73--94, 1972.

\bibitem{stanley1999enumerative}
Richard~P. Stanley.
\newblock {\em Enumerative Combinatorics Volume 2}.
\newblock Cambridge University Press, 1999.

\end{thebibliography}

\end{document}